\newtheorem{thm}{Theorem}
\newtheorem{prop}{Proposition}
\newtheorem{lem}{Lemma}
\newtheorem{defn}{Definition}
\newtheorem{coro}{Corollary}
\newtheorem{claim}{Claim}
\newtheorem*{clm}{Claim}
\newtheorem*{notation}{Notation}
\def\ninf{\mathbb N^{[\infty]}}
\def\fininf{\text{FIN}^\infty}
\def\finkinf{\text{FIN}_k^{\infty}}
\def\finfin{\text{FIN}^{<\infty}}
\def\finkfin{\text{FIN}_k^{<\infty}}
\newcommand{\cU}{\mathcal{U}}
\def\Mh{\mathbb M_{\mathcal H}}
\def\col{Col(\omega , <\lambda)}
\newcommand{\fink}{\text{FIN}_k}
\newcommand{\finkk}{\text{FIN}_k^\infty}
\newcommand{\less}{<}
\newcommand{\linfk}{\cA\finkk}
\newcommand{\one}{\mathds{1}}
\newcommand{\bM}{\mathbb{M}}
\newcommand{\cA}{\mathcal{A}}
\newcommand{\cD}{\mathcal{D}}
\newcommand{\cH}{\mathcal{H}}
\newcommand{\cX}{\mathcal{X}}
\newcommand{\cY}{\mathcal{Y}}
\DeclareMathOperator{\depth}{depth}
\DeclareMathOperator{\FIN}{FIN}
\title{Ramsey subsets of the space of infinite block sequences of vectors .}
\address{* Instituto Venezolano de Investigaciones Cient\'{\i}ficas\\
**Universidad de Los Andes, Bogot\'a, Colombia \\*** University of Colorado Denver. }
\author{Daniel Calder\'on $^{**}$, Carlos Augusto Di Prisco$^{*, **}$ Jos\'e Gregorio Mijares$^{***}$ }
\begin{document}

\subjclass[2000]{Primary 05E35; Secondary 03E55.}

\keywords{Ramsey property, Semiselective co-ideal, block sequences of finite sets}

\begin{abstract}

We study families of infinite block sequences of   elements of the space $\FIN_k$. In particular we study Ramsey properties of such families and Ramsey properties localized to a selective or semiselective coideal. We show how the stable ordered-union ultrafilters defined by Blass, and Matet-adequate families defined by Eisworth in the case $k=1$ fit in the theory of the Ramsey space of infinite block sequences of finite sets of natural numbers.

\end{abstract}

\maketitle

\section{Introduction}

In this article we study the Ramsey property of subsets of the space $\finkinf$ of infinite block sequences of elements of $\FIN_k$. The case $k=1$ deals with block sequences of finite sets of natural numbers. We consider coideals contained in these spaces and  the Ramsey property localized on such a coideals. 

Let $k$ be a positive integer, 
$\FIN_k=\{p: \mathbb N\to \{0, 1, \dots , k\}: \{n: p(n)\neq 0\} \mbox{ is finite and  } k\in \mbox{ range} (p)\}$.
For $p\in \FIN_k$, supp$(p)= \{n: p(n)\neq 0\}$.
$\FIN_k$ is a partial semigroup under the partial semigroup operation of addition of elements with disjoint support.
A block sequence of elements of $FIN_k$ is a (finite or infinite) sequence $(p_n)$ with supp$(p_n)<$ supp$(p_{n+1})$ for every $n\in \mathbb N$ (i.e. the maximal element of supp$(p_n)$ is strictly below the minimal element of supp$(p_{n+1})$ for every $n$).

The relation between $\FIN_k$ and the positive part of the unitary sphere of the Banach space $c_0$ is well known, see for example \cite{To}, page 37. It permits to identify  elements of $\FIN_k$ with vectors.
 
The operation $T:\FIN_k\to \FIN_{k-1}$ is defined by $T(p)(n)=\mbox{max}\{p(n)-1, 0\}$.

Given an infinite block sequence $A=(p_n)$ of elements of $\FIN_k$, the subsemigroup $[A]$ of $\FIN_k$ generated by $A$ is the collection of elements of $\FIN_k$ of the form
$$T^{(i_0)}(p_{n_0})+ \dots +T^{(i_l)}(p_{n_l})$$
for some sequence $n_0<\dots <n_l$  and some choice $i_0, \dots , i_l\in \{0, 1, \dots , k\}$. Notice that for the sum to remain in $\FIN_k$ at least one of the numbers 
$i_0, \dots , i_l$ must be $0$.

We denote by $\finkinf$ the space of infinite block sequences of elements of $\FIN_k$. If $k=1$ we simply write $\fininf$; in this case, $[A]$ is the subsemigroup of $\fininf$ formed by all the block sequences whose elements are finite unions of elements of $A$.

To define the Ramsey property of subsets of $\finkinf$, we first recall the definition of the Ramsey property for subsets of  the space $\ninf$ of all infinite sets of natural numbers.  With the product topology (the topology inherited from the product topology on $2^{\mathbb N}$), this space  is homeomorphic to $\mathbb R\setminus \mathbb Q$, the irrational numbers.
The exponential topology of this space, also called the Ellentuck topology,  is finer than the product topology and it is generated by the basic sets of the form
$$[a,A]=\{X\in \ninf : a\sqsubset X \subseteq  A\}, $$
where $a$ is a finite set of natural numbers, $A$ is an infinite subset of $\mathbb N$, and $a\sqsubset X$ means that $a$ is an initial segment of $X$ in its increasing order.

A subset $\mathcal A\subseteq \ninf$ is Ramsey, or has the Ramsey property, if for every $[a,A]$ there is an infinite subset $B$ of $A$ such that $[a,B]\subseteq \mathcal A$ or $[a,B]\cap \mathcal A=\emptyset$.
Silver proved that all analytic subsets of $\ninf$ have the Ramsey property. His proof  has  a metamathematical character, as opposed to the combinatorial proof of Galvin and Prikry for the Borel sets. Ellentuck \cite{Ell} gave a topological proof of Silver's result by showing that  a subset of $\ninf$ is Ramsey if and only if it has the property of Baire with respect to the exponential topology.

We present below similar results for the space $\finkinf$ of infinite block sequences of elements of $\FIN_k$.
We also consider certain subfamilies of $\finkinf$ to define coideals, selective coideals and semiselective coideals; and study some forcing notions related to these subfamilies.
Previous work in this subject was done in \cite{Bl, Ei, Ga, Mat}.
More recently, Zhang \cite{Zh} studies the preservation of selective ultrafilters on $\FIN$ under Sacks forcing, proves that selective ultrafilters on $\FIN$ localize ther parametrized Milliken theorem,   and also proves that  that those selective ultrafilters are Ramsey.

Garc\'{\i}a \'Avila, in \cite{Ga}, considers several forcing notions related to the space $\fininf$,  and in particular a forcing notion analogous to Mathias forcing  adapted to this space.
She proves that this notion has a pure decision property (a Prikry property) and asks if it has a property analogous to the fact that an infinite subset of a Mathias generic real is also a Mathias generic real (hereditary genericity, or the Mathias property).
This question was answered positively in \cite{CaDP}, and here we extend this answer to the forcing localized on a semiselective coideal.

In this article we study these forcing notions and their relation to some classes of ultrafilters introduced by Blass and Hindman (see \cite{Bl}). Stable ordered-union ultrafilters on the space $\FIN$ of finite sets of natural numbers were defined by Blass (\cite{Bl}); these ultrafilters are related to Hindman's theorem on partitions of $\FIN$ in the same way selective ultrafilters on $\omega$ are related to Ramsey's theorem. We show that stable ordered-union ultrafilters are closely related to selective ultrafilters on the Ramsey space $\fininf$.
In his study of forcing and stable ordered-union ultrafilters (\cite{Ei}) Eisworth isolates the concept of Matet-adequate families of elements of $\fininf$, and proves that forcing with such a familiy adds a stable ordered-union ultrafilter. We show that Matet-adequate families correspond to selective coideals of the topological Ramsey space $\fininf$.

We also address the problem of the consistency of the statement all subsets of $\finkinf$ have the Ramsey property localized with respect to a semiselective coideal.
This presentation is formulated in the context of topological Ramsey spaces as presented in  \cite{To}.

\section{Block sequences of elements of $\FIN_k$.}\label{block}

As defined above, for a positive integer $k$,  $\finkinf$  denotes the collection of all infinite block sequences of elements of $FIN_k$, that is to say, all sequences $p_0, p_1 \dots$ where, for all $i\in \omega$, $p_i\in \FIN_k$ and $\mbox{max(supp}(p_i))<\mbox{min(supp}(p_{i+1}))$.

For a positive integer $d$, $\fink^{[d]}$ denotes the collection of all finite block sequences of length $d$.

We define the \textbf{approximation space of} $\finkinf$ as the set
$$\linfk:=\bigcup\left\{\fink^{[d]}:d\in\omega\right\}= \FIN_k^{<\infty}$$
 the collection of finite block sequences of elements of $\FIN_k$. 
 
 For each $m\in\omega$, we define the \textbf{approximation function} $r_m:\finkk\to\fink^{[m]}$ that sends an infinite block sequence to its first $m$ blocks. We define $r:\finkinf\times \omega\to \finkfin$ by $r(X,n)=r_n(X)$.

For $A\in\finkk$ we use the symbols
$$\finkk\upharpoonright A:=\{B\in\finkk:B\subseteq[A]\}\text{ and }\finkfin\upharpoonright A:=\{s\in\finkfin:s\subseteq[A]\}$$


Consider the binary relation defined on $\finkinf$ by
$X\leq Y$ if $X$ is a {\bf condensation} of $Y$, that is, every element of $X$ belongs to the subsemigroup $[Y]$ generated by  $Y$.

The triple $(\FIN_k^\infty, \leq, r)$ satisfies the following  properties A1-A4.
\bigskip

\noindent \textbf{(A.1)}\ [{\bf Metrization}] 
\begin{itemize}
	\item[{(A.1.1)}]For any $A\in \finkinf$, $r_{0}(A) = \emptyset$.
	\item[{(A.1.2)}]For any $A,B\in \finkinf$, if $A\neq B$ then
	$(\exists n)\ (r_{n}(A)\neq r_{n}(B))$.
	\item[{(A.1.3)}]If $r_{n}(A) =r_{m}(B)$ then $n = m$ and $(\forall i<n)\ (r_{i}(A) = r_{i}(B))$.
\end{itemize}

Take the discrete topology on $\finkfin$ and endow 
${(\finkfin)}^{\mathbb{N}}$ with the product topology; this is the metric space of all the sequences of 
elements of $\finkfin$.  Notice  that $\finkinf$ is a closed
  subspace of ${(\finkfin)}^{\mathbb{N}}$. 
  
 With this notation, the basic open sets generating the metric topology on $\finkinf$ are of the form
\begin{equation}\notag\label{eq basic opens in ARN}
		[s] = \{B\in \finkinf : (\exists n)(s =
	r_{n}(B))\}
\end{equation}
where $s\in\finkfin$.  Let us define the {\bf length} of $s$,
as the unique integer $|s|=n$ such that $s = r_{n}(A)$ for some $A\in
\finkinf$.

We will consider another topology on $\finkinf$ which we will call the Ellentuck (or exponential) topology. The {\bf Ellentuck type neighborhoods} are of the form:
\begin{equation}\notag\label{eq basic Ellentuck opens}
		[a,A] = \{B\in [a] : B\leq A\}=
		\{B\in \finkinf : (\exists n)\ a=r_n(B) \ \&\ B\leq A\}.
\end{equation}
where $a\in\finkfin$
and $A\in \finkinf$. 

\medskip

We will use $[n,A]$ to abbreviate $[r_{n}(A),A]$. 

\medskip

Notice that 
\begin{equation}\notag
    \finkfin\upharpoonright  A = \{a\in\finkfin : [a,A]\neq\emptyset\}.
\end{equation}
Given a neighborhood $[a,A]$ and $n\geq |a|$, let $r_n[a,A]$ be the image of $[a,A]$ by the function $r_n$, i.e., 
\begin{equation}\notag\label{eq image of basic Ellentuck opens}
		r_n[a,A] =
		\{r_n(B) : B\in [a,A] \}.
\end{equation}

\medskip

Given $a,b\in\finkfin$, write 

\begin{equation}\notag
a\sqsubseteq b\ \mbox{ iff } (\exists A\in\finkinf)\ (\exists m,n\in\mathbb N)\  m\leq n, a=r_m(A)\mbox{ and } b= r_n(A).
\end{equation}

\medskip

By A.1, $\sqsubseteq$ can be proven to be a partial order on $\finkfin$.

\medskip
The relation $\leq_{fin}$ on $\finkfin$ is defined in a similar way as the relation $\leq$ on $\finkinf$, and we have the following.

\noindent \textbf{(A.2)}\ [{\bf Finitization}] The quasi order $\leq_{fin}$ on
$\finkfin$ satisfies:
\begin{itemize}
    \item[(A.2.1)] $A\leq B$ iff
    $(\forall n)\ (\exists m) \ \ (r_{n}(A)\leq_{fin} r_{m}(B))$.
    \item[(A.2.2)] $\{b\in \finkfin : b\leq_{fin} a\}$ is finite, for every
    $a\in \finkfin$.
\item[(A.2.3)] If $a\leq_{fin} b$ and $c \sqsubseteq a$ then there is $d \sqsubseteq b$ such that $c \leq_{fin} d$.
\end{itemize}

\medskip

Given $A\in\finkinf$ and $a\in\finkfin\upharpoonright  A$,
we define the {\bf depth} of $a$ in $A$ as 
\begin{equation}\notag\label{eq depth of a segment}
		\operatorname{depth}_{A}(a): = \min\{n :a\leq_{fin}r_n(A)\}
\end{equation}

\noindent \textbf{(A.3)}\ [{\bf Amalgamation}] Given $a$ and $A$
with $\operatorname{depth}_{A}(a)=n$, the following holds:
\begin{itemize}
		\item[(A.3.1)] $(\forall B\in [n,A])\ \ ([a,B]\neq\emptyset)$.
		\item[(A.3.2)] $(\forall B\in [a,A])\ \ (\exists A'\in [n,A])\ \
		([a,A']\subseteq [a,B])$.
\end{itemize}

\bigskip

\noindent \textbf{(A.4)}\ [{\bf Pigeonhole Principle} (essentially Gowers' Theorem \cite{Go}, Hindman's Theorem \cite{Hin} for $k=1$)] Given $a$
and $A$ with $\operatorname{depth}_{A}(a) = n$, for every $\mathcal{O}\subseteq \FIN_k^{|a|+1}$ there is $B\in
[n,A]$ such that $r_{|a|+1}[a,B]\subseteq\mathcal{O}$ or $r_{|a|+1}[a,B]\subseteq\mathcal{O}^c$.

\begin{defn}
A set $\mathcal{X}\subseteq \finkinf$
is \textbf{Ramsey} if for every neighborhood $[a,A]\neq\emptyset$
there exists  $B\in [a,A]$ such that $[a,B]\subseteq \mathcal{X}$
or $[a,B]\cap \mathcal{X} = \emptyset$. A set
$\mathcal{X}\subseteq \finkinf$ is \textbf{Ramsey null} if for
every neighborhood $[a,A]$ there exists  $B\in [a,A]$ such that
$[a,B]\cap \mathcal{X} = \emptyset$.

A set $\mathcal{X}\subseteq \finkinf$
has the \textbf{abstract Baire property} if for every neighborhood $[a,A]\neq\emptyset$
there exists  $\emptyset\neq[b,B]\subseteq [a,A]$ such that $[b,B]\subseteq \mathcal{X}$
or $[b,B]\cap \mathcal{X} = \emptyset$. A set
$\mathcal{X}\subseteq \mathcal{R}$ is \textbf{nowhere dense} if for
every neighborhood $[a,A]$ there exists  $[b,B]\subseteq [a,A]$ such that
$[b,B]\cap \mathcal{X} = \emptyset$.
\end{defn}

\begin{thm}(\cite{To})\label{fininfram}
$(\finkinf, \leq, r)$ is a topological Ramsey space.
In other words,  a subset $\mathcal X\subseteq \finkinf$ is Ramsey if and only if it has the Baire property with respect to the Ellentuck topology, and Ramsey null sets coincide with nowhere dense sets.
\end{thm}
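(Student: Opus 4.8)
The plan is to reduce the statement to Todorcevic's Abstract Ellentuck Theorem, which asserts exactly that any triple $(\mathcal R,\leq,r)$ satisfying the axioms A.1--A.4 is a topological Ramsey space, so that the Baire property (with respect to the Ellentuck topology) coincides with the Ramsey property and the nowhere dense sets coincide with the Ramsey null sets. Thus the whole proof amounts to checking that A.1--A.4 hold for $(\finkinf,\leq,r)$, after which the two equivalences in the statement are immediate. Axiom A.1 (Metrization) is purely formal: $r_0$ always returns the empty sequence, two distinct infinite block sequences differ at some finite stage since they are sequences of finite block sequences, and $r_n(A)=r_m(B)$ forces $n=m$ and agreement of all earlier approximations because each $r_i(A)$ is recovered as the length-$i$ initial segment of $r_n(A)$.

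For A.2 (Finitization) one works with the finite analogue $\leq_{fin}$ of condensation: $a\leq_{fin} b$ iff every block of $a$ is obtained from the blocks of $b$ by the admissible operations $T^{(i_0)}(b_{n_0})+\dots+T^{(i_l)}(b_{n_l})$ with $n_0<\dots<n_l$ (at least one $i_j=0$), listed in the order imposed by $b$. Then A.2.1 is just the unwinding of the definition of $[B]$ approximation by approximation. For A.2.2, if $b\leq_{fin} a$ then every block of $b$ lies in the \emph{finite} set $[a]$ generated by the finite block sequence $a$, and $\supp(b)\subseteq\bigcup_i\supp(a_i)$ is bounded; since blocks of a block sequence have strictly increasing supports, the length of $b$ is bounded, so there are only finitely many such $b$. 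Axiom A.2.3 is a routine bookkeeping check: given $a\leq_{fin} b$ and $c\sqsubseteq a$, one locates inside $b$ the finitely many blocks used to produce the blocks of $c$ and collects them into the required $d\sqsubseteq b$ with $c\leq_{fin} d$, using that $\sqsubseteq$ respects the support ordering.

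Axiom A.3 (Amalgamation) is handled by the standard depth manipulations. If $\depth_A(a)=n$ then $a\leq_{fin} r_n(A)$; hence for any $B\in[n,A]$ we have $r_n(B)=r_n(A)\subseteq[B]$, so $a$ can be continued to an infinite condensation of $B$, giving A.3.1. For A.3.2, given $B\in[a,A]$ one forms $A'$ by keeping the first $n$ blocks of $A$ and then following the blocks of $B$; then $A'\in[n,A]$ and every infinite condensation of $A'$ extending $a$ is a condensation of $B$ extending $a$, i.e. $[a,A']\subseteq[a,B]$.

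The heart of the matter is A.4 (Pigeonhole), and this is the only step requiring genuine combinatorics. Given $a$ and $A$ with $\depth_A(a)=n$, a partition $\mathcal O\sqcup\mathcal O^c$ of $\FIN_k^{|a|+1}$ induces a finite coloring of the single block $p\in[A]$ that can legally follow $a$ (the first $|a|$ coordinates of any one-block extension of $a$ are fixed, equal to the blocks of $a$), hence a finite coloring of $\FIN_k$ restricted to the tail $A'$ of $A$ past $\supp(a)$. Gowers' $\FIN_k$ theorem (Hindman's theorem for $k=1$) produces an infinite block sequence $B'\leq A'$ whose entire generated set of blocks, under the operations $T^{(i)}$ and admissible sums, lies in one color class; prepending $r_n(A)$ to $B'$ yields the required $B\in[n,A]$ with $r_{|a|+1}[a,B]$ monochromatic. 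The main obstacle is therefore not a deep argument of our own but the packaging: transcribing Gowers' theorem into precisely the localized, one-block-extension form demanded by A.4 and reconciling it with the depth calculus. Once A.1--A.4 are in place, the Abstract Ellentuck Theorem of \cite{To} applies directly and gives both assertions of the theorem.
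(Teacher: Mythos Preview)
Your proposal is correct and follows exactly the paper's approach: the paper's proof consists of the single sentence that the result follows from A.1--A.4 together with the Abstract Ellentuck Theorem of \cite{To}, the axioms themselves having been stated (and in the case of A.4, attributed to Gowers/Hindman) immediately before the theorem. Your write-up simply supplies more detail in verifying each axiom, which is fine; the only place to be slightly careful is in your A.3.2 construction, where ``following the blocks of $B$'' should mean the blocks of $B$ whose support lies above that of $r_n(A)$, but this is the standard argument and your intent is clear.
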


\begin{proof}
The result follows from  the fact that $(\finkinf, \leq, r)$ satisfies A1, A2. A3 and A4 (see \cite{To}). 
\end{proof}

\begin{defn}
Given $\mathcal H\subseteq\finkinf$, we say that $\mathcal H$ is a \textbf{coideal} if it satisfies the following:
\begin{itemize}
\item[(a)] $\mathcal H$ is closed under finite changes, i.e. if $A\in \mathcal H$ and $A\triangle B$ is finite, then $B\in \mathcal H$.
\item[{(b)}] For all $A,B\in\finkinf$, if $A\in\mathcal H$ and $A\leq B$ then $B\in\mathcal H$.
\item[{(c)}] (${\bf A3}\mod\mathcal H$) For all $A\in\mathcal H$ and $a\in\finkfin\upharpoonright A$, the following holds:
\begin{itemize}
\item $[a,B]\neq\emptyset$ for all $B\in [\depth_A(a),A]\cap\mathcal H$.
\item If $B\in\mathcal{H}\upharpoonright A$ and $[a,B]\neq\emptyset$ then there exists $A'\in [\depth_A(a), A]\cap\mathcal H$ such that  $\emptyset\neq [a,A']\subseteq [a,B]$.

\end{itemize}
\item[{(d)}]  (${\bf A4}\mod\mathcal H$) Let $A\in\mathcal H$ and $a\in\finkfin\upharpoonright  A$ be given. For all $\mathcal O \subseteq \FIN_k^{|a|+1}$ there exists $B\in [\depth_A(a),A]\cap\mathcal H$ such that $r_{|a|+1}[a,B]\subseteq\mathcal O$ or $r_{|a|+1}[a,B]\cap\mathcal O=\emptyset$.
\end{itemize}

\end{defn}

For a coideal $\mathcal H$, and $A\in \finkinf$, 
$$\mathcal{H}\upharpoonright  A = \{B\in \cH: B\leq A\}.$$

The Ramsey property and the Baire property are localized to a coideal in the following fashion.

\begin{defn} $\mathcal X\subseteq \finkinf$ is \textbf{$\mathcal H$-Ramsey} if for every $[a,A]\neq \emptyset$, with $A\in \mathcal H$, there exists $B\in [a,A]\cap \mathcal H$ such that $[a,B]\subseteq \mathcal X$ or $[a,B]\subseteq \mathcal X^c$. If for every $[a,A]\neq \emptyset$, there exists $B\in [a,A]\cap\mathcal H$ such that $[a,B]\subseteq \mathcal X^c$; we say that  $\mathcal X$ is \textbf{$\mathcal H$-Ramsey null}.
\end{defn}

\begin{defn} $\mathcal X\subseteq \finkinf$ is $\mathcal H$-\textbf{Baire} if for every $[a,A]\neq \emptyset$, with $A\in \mathcal H$, there exists $\emptyset\neq[b,B]\subseteq[a,A]$, with $B\in\mathcal H$, such that $[b,B]\subseteq \mathcal X$ or $[b,B]\subseteq \mathcal X^c$. If for every $[a,A]\neq \emptyset$, with $A\in\mathcal H$, there exists $\emptyset\neq[b,B]\subseteq[a,A]$, with $B\in \mathcal H$, such that $[b,B]\subseteq \mathcal X^c$; we say that $\mathcal X$ is $\mathcal H$-\textbf{meager}.
\end{defn}

It is clear that if $\mathcal X\subseteq \finkinf$ is $\mathcal H$-Ramsey then $\mathcal X$ is $\mathcal H$-Baire.  
Theorem \ref{fininfram} can be extended to the properties $\mathcal H$-Ramsey and $\mathcal H$-Baire when the family $\mathcal H$ is a semiselective coideal. We define this notion now.

\begin{defn}\label{diagonal3}
 Given $A\in\finkinf$ and a sequence $\mathcal{A}=(A_n)_{n\in\mathbb N}\subseteq\finkinf$, we say that $B\in\finkinf$ is a \textbf{diagonalization} of $\mathcal A$ whitin $A$ if for every $b\in\finkfin\upharpoonright B$ with $\depth_A(b)=n$ we have $[b, B]\subseteq [b,A_n]$. 
\end{defn}


\begin{defn}\label{selective}
 A coideal $\mathcal H\subseteq\finkinf$ is \textbf{selective} if given $[a,A]\neq\emptyset$ with $A\in \mathcal H$, for every sequence $\mathcal{A}=(A_n)_{n\in\mathbb N}\subseteq\mathcal{H}\upharpoonright \!\!A$ such that $A_n\geq A_{n+1}$ and $[a,A_n]\neq\emptyset$, there exists $B\in\mathcal H\cap [a,A]$ which diagonalizes $\mathcal{A}$ within $A$.
\end{defn}

Notice that  $\finkinf$ is a selective coideal.

 We will see that, for the case $k=1$, stable ordered-union ultrafilters  (see definitions \ref{uultra}, \ref{souultra}) give rise to other examples of selective coideals on $FIN$.

\begin{defn} Let $\mathcal H\subseteq\finkinf$ be  a coideal. Given sets $\mathcal D, \mathcal S\subseteq\mathcal  H$, we say that  $\mathcal D$ is \textbf{dense open in} $\mathcal S$ if the following hold:
\begin{enumerate}
\item $(\forall A\in \mathcal S)\ (\exists B\in\mathcal D)\ B\leq A$.
\item  $(\forall A\in \mathcal S)\ (\forall B\in\mathcal D)\ [A\leq B \rightarrow A\in\mathcal D]$.
\end{enumerate}
\end{defn}

\begin{defn}\label{diagonal2}
 Given $A\in\mathcal H$ and a collection $\mathcal D = \{\mathcal D_a\}_{a\in \finkfin\upharpoonright A}$ such that each $\mathcal D_a$ is dense open in $\mathcal H\cap [\depth_A(a),A]$, we say that $B\leq A$ is a \textbf{diagonalization} of $\mathcal D$ if there exists a family $\mathcal A=\{A_a\}_{a\in \finkfin\upharpoonright A}$, with $A_a\in\mathcal D_a$, such that for every $a\in\finkfin\upharpoonright B$ we have $[a, B]\subseteq [a,A_a]$. 
\end{defn}

\begin{defn}\label{semiselective}
We say that a  coideal $\mathcal H\subseteq\finkinf$ is \textbf{semiselective} if for every $A\in \mathcal H$, every collection $\mathcal D = \{\mathcal D_a\}_{a\in \finkfin\upharpoonright A}$ such that each $\mathcal D_a$ is dense open in $\mathcal H\cap [\depth_A(a),A]$ and every $C\in\mathcal{H}\upharpoonright  A$, there exists $B\in\mathcal{H}\upharpoonright  C$ such that $B$ is a diagonalization of $\mathcal D$ 
\end{defn}

\begin{lem}\label{filtered}
Given a coideal $\mathcal{H}$ of $\finkinf$ and $A\in\mathcal{H}$, for every $(\mathcal{D}_a)_{a\in\finkfin\upharpoonright A}$ such that each $\mathcal{D}_a$ is dense open in $\mathcal H\cap[\depth_A(a),A]$ there exists $(A_n)_{n\in\mathbb N}\subseteq\mathcal{H}\upharpoonright  \,A$ such that $A_n\in\mathcal{D}_a$ for all $a\in\finkfin\upharpoonright A$ with $\depth_A(a)=n$.  
\end{lem}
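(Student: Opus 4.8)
The plan is to treat each level $n\in\mathbb N$ in isolation: since the conclusion only asks for a sequence $(A_n)_{n\in\mathbb N}\subseteq\mathcal H\upharpoonright A$, with no coherence required between the $A_n$'s, it suffices to produce, for each fixed $n$, a single $A_n\in\mathcal H\upharpoonright A$ lying in $\mathcal D_a$ for every $a\in\finkfin\upharpoonright A$ with $\depth_A(a)=n$. The crucial first observation --- and the reason this works for an \emph{arbitrary} coideal --- is that for fixed $n$ the set $F_n:=\{a\in\finkfin\upharpoonright A:\depth_A(a)=n\}$ is finite: if $\depth_A(a)=n$ then $a\leq_{fin}r_n(A)$, so $F_n\subseteq\{b\in\finkfin:b\leq_{fin}r_n(A)\}$, which is finite by (A.2.2). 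If $F_n=\emptyset$, put $A_n:=A$; otherwise list $F_n=\{a_1,\dots,a_{m}\}$. Since each $a_i$ has depth $n$ in $A$, the sets $\mathcal D_{a_1},\dots,\mathcal D_{a_{m}}$ are all dense open in one and the same set $\mathcal S_n:=\mathcal H\cap[n,A]$, and $A$ itself belongs to $\mathcal S_n$ (as $A\in\mathcal H$ and $\le$ is reflexive).

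Next I would run a finite iteration down the list. Set $B_0:=A\in\mathcal S_n$; given $B_{i-1}\in\mathcal S_n$, clause (1) of ``$\mathcal D_{a_i}$ dense open in $\mathcal S_n$'' yields $B_i\in\mathcal D_{a_i}$ with $B_i\le B_{i-1}$, and since every member of $\mathcal D_{a_i}$ lies in $\mathcal S_n$ and $\le$ is transitive we get $B_i\in\mathcal S_n$ and $B_i\le A$. Put $A_n:=B_{m}$. Then $A_n\le A$ and $A_n\in\mathcal H$, so $A_n\in\mathcal H\upharpoonright A$; and for each $i\le m$ we have $A_n\le B_i$, $B_i\in\mathcal D_{a_i}$ and $A_n\in\mathcal S_n$, so clause (2) (openness) forces $A_n\in\mathcal D_{a_i}$. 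Hence $A_n\in\mathcal D_a$ for every $a\in F_n$. Doing this for every $n\in\mathbb N$ produces the required sequence $(A_n)_{n\in\mathbb N}$.

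I do not expect a real obstacle. The one essential point is the finiteness of $F_n$, which is exactly the content of (A.2.2): it is what lets one collapse finitely many dense open requirements at level $n$ into a single condition via a finite descending chain, and it is precisely this finiteness that is unavailable ``across levels'', which is why diagonalizing over all levels at once is the genuine extra strength in the definition of a semiselective coideal. The only other things to keep in mind are the reflexivity and transitivity of the condensation relation $\le$ (so that $A\in[n,A]$ and $B_m\le\dots\le B_0=A$), and the bookkeeping fact, built into ``dense open in $\mathcal S_n$'', that every member of $\mathcal D_{a_i}$ already lies in $\mathcal S_n$.
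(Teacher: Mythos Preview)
Your proof is correct and follows essentially the same approach as the paper: fix $n$, use (A.2.2) to see that only finitely many $a$ have $\depth_A(a)=n$, and then run a finite descending iteration through the corresponding dense open sets, using openness at the end to conclude that the last term lies in every $\mathcal D_{a_i}$. Your write-up is in fact more explicit than the paper's (you justify the finiteness of $F_n$, handle the case $F_n=\emptyset$, and spell out the appeal to openness), but the argument is the same.
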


\begin{proof}
For every $n\in\mathbb{N}$, list 
$$\{a_1,a_2,\dots,a_{k_n}\}=\{a\in\finkfin\upharpoonright A\colon \depth_A(a)=n\}$$
Since each $\mathcal{D}_a$ is dense open in $\mathcal H\cap[\depth_A(a),A]$, for each $i=1, \dots, k_n$, $D_{a_i}$ is dense in $[n,A]$.
So, we can choose $A^1\in D_{a_1}\upharpoonright A$, $A^2\in D_{a_2}\upharpoonright A^1$, $\dots A^{k_n}\in D_{a_{k_n}}\upharpoonright A^{k_n-1}$.
Clearly, $A^{k_n} \in \bigcap_{i=1}^{k_n}D_{a_i}$. Put $A_n=A^{k_n}$; then $(A_n)_{n\in\mathbb N}$ is the desired sequence.
We have  that for every $n$, $A_n\in \mathcal{H}\upharpoonright A$, and $A_n\in D_a$ for every $a\in \finkfin\upharpoonright A$ with $\depth_A(a)=n$.
\end{proof}


\begin{prop}\label{semiequiv}
 A coideal $\cH\subseteq\finkinf$ is \textbf{semiselective} if for every $A\in\cH$ and every set $\cD=\{D_n:n\in\omega\}$, each $D_n$ dense open subset of 
 $\cH\cap [n, A]$, there exists $B\in\cH\upharpoonright A$ 
 such that  for each $s\in\finkfin\upharpoonright B$, $B/s\in D_{\depth_A(s)}$.
\end{prop}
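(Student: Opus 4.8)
The plan is to show that the characterization in Proposition~\ref{semiequiv} is equivalent to the definition of semiselectivity given in Definition~\ref{semiselective}. One direction is essentially immediate: if $\cH$ is semiselective in the sense of Definition~\ref{semiselective}, then given $A\in\cH$ and a sequence $\cD=\{D_n:n\in\omega\}$ with each $D_n$ dense open in $\cH\cap[n,A]$, we simply set $\mathcal D_a:=D_{\depth_A(a)}$ for each $a\in\finkfin\upharpoonright A$. Each $\mathcal D_a$ is then dense open in $\cH\cap[\depth_A(a),A]$ (since it only depends on $a$ through its depth), so Definition~\ref{semiselective} yields $B\in\cH\upharpoonright C$ (taking $C=A$) diagonalizing $\{\mathcal D_a\}$; unwinding Definition~\ref{diagonal2}, for each $s\in\finkfin\upharpoonright B$ with $\depth_A(s)=n$ we get $[s,B]\subseteq[s,A_s]$ with $A_s\in\mathcal D_a=D_n$, and since $D_n$ is open (downward closed under $\leq$) and $B/s\leq A_s$, we conclude $B/s\in D_{\depth_A(s)}$. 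Here $B/s$ denotes the unique $Y$ with $s\sqsubset Y$ and $[s,B]=[s,Y]$, i.e. $B$ truncated past $s$.

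For the converse, assume the uniform-in-$n$ diagonalization property and let $A\in\cH$, $C\in\cH\upharpoonright A$, and $\mathcal D=\{\mathcal D_a\}_{a\in\finkfin\upharpoonright A}$ be given, each $\mathcal D_a$ dense open in $\cH\cap[\depth_A(a),A]$. The key step is to collapse the family indexed by all finite block sequences $a$ into a family indexed only by depths $n$, using Lemma~\ref{filtered}. That lemma produces a sequence $(A_n)_{n\in\mathbb N}\subseteq\cH\upharpoonright A$ with $A_n\in\mathcal D_a$ for every $a$ with $\depth_A(a)=n$; after replacing each $A_n$ by a $\leq$-lower element we may assume the $A_n$ are $\leq$-decreasing and all lie below $C$. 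Now define $D_n:=\{Y\in\cH\cap[n,A]: Y\leq A_n\}$ (or its dense-open closure); each $D_n$ is dense open in $\cH\cap[n,A]$ because $A_n\in\cH\cap[n,A]$ and the set is downward closed. Apply the hypothesis to obtain $B\in\cH\upharpoonright C$ with $B/s\in D_{\depth_A(s)}$ for each $s\in\finkfin\upharpoonright B$; then $B/s\leq A_{\depth_A(s)}$, hence $[s,B]\subseteq[s,A_{\depth_A(s)}]\subseteq[s,A_a]$ for the witnessing $A_a$, which shows $B$ diagonalizes $\mathcal D$ in the sense of Definition~\ref{diagonal2}.

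The main obstacle is bookkeeping around depths: the density of $\mathcal D_a$ is only asserted inside $\cH\cap[\depth_A(a),A]$, so when we pass to a depth-indexed family we must check that intersecting finitely many $\mathcal D_a$'s over all $a$ of a fixed depth $n$ still leaves something dense in $[n,A]$ — this is exactly the content of Lemma~\ref{filtered}, so invoking it cleanly is the crux. A secondary point requiring care is the role of the extra parameter $C$: in Definition~\ref{semiselective} the diagonalization must lie below a prescribed $C\in\cH\upharpoonright A$, whereas the statement of Proposition~\ref{semiequiv} as written only asks for $B\in\cH\upharpoonright A$; one checks that the hypothesis is strong enough to recover the $C$-version by first absorbing $C$ into the $A_n$'s (arranging $A_0\leq C$, which forces every subsequent $A_n$ and hence $B$ below $C$), or else one reads the "$\finkfin\upharpoonright B$, $B/s\in D_{\depth_A(s)}$" conclusion as implicitly allowing the same parameter. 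Once these indexing issues are handled, the equivalence follows by the routine unwinding of Definitions~\ref{diagonal2} and~\ref{diagonal3} sketched above.
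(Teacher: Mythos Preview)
Your forward direction is fine and matches the paper's argument.

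The converse direction has a genuine gap. You invoke Lemma~\ref{filtered} to produce a sequence $(A_n)_n$ and then set $D_n:=\{Y\in\cH\cap[n,A]:Y\leq A_n\}$, claiming this is dense open in $\cH\cap[n,A]$ ``because $A_n\in\cH\cap[n,A]$ and the set is downward closed.'' Downward closure gives openness, but not density: for $D_n$ to be dense you would need that every $Z\in\cH\cap[n,A]$ has a common $\leq$-refinement with $A_n$ inside $\cH$. A coideal is only upward closed, not directed, so two elements of $\cH$ need not have a common lower bound in $\cH$. The same problem undermines your claim that ``after replacing each $A_n$ by a $\leq$-lower element we may assume the $A_n$ are $\leq$-decreasing and all lie below $C$'': arranging this again requires finding common refinements that a coideal does not in general provide.

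The paper avoids this entirely by setting $D_n:=\bigcap_{\depth_A(a)=n}\mathcal D_a$ directly. Since only finitely many $a$ have depth $n$, this is a finite intersection of sets that are dense open in $\cH\cap[n,A]$, hence itself dense open there---no appeal to Lemma~\ref{filtered} or to compatibility of specific witnesses is needed. Once $B$ is obtained from the hypothesis with $B/b\in D_n$ for each $b\in\finkfin\upharpoonright B$ of depth $n$, one defines $A_a$ by $r_{|a|}(A_a)=a$ and $A_a/a=B/a$ for $a\in\finkfin\upharpoonright B$ (and arbitrarily in $\mathcal D_a$ otherwise), which witnesses the diagonalization. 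Your observation about the parameter $C$ is well taken; the paper's proof does pass over this point quickly, but the density issue is the substantive error in your argument.
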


\begin{proof}
Given $A\in  \cH$, and  $(D_n)_{n\in \mathbb N}$, we define $D_a=D_n$ for every $a\in \finkfin\upharpoonright A$ with $\depth_A(a)=n$. A diagonalization of $(D_a)$ diagonalizes $(D_n)$ within $A$.

Conversely, given $A\in \cH$, and $\mathcal D = \{\mathcal D_a\}_{a\in \finkfin\upharpoonright A}$ such that each $\mathcal D_a$ is dense open in $\mathcal H\cap [\depth_A(a),A]$, 
put for every $n\in \mathbb N$, $D_n=\bigcap_{\depth_A(a)=n}D_a$.
For every    $C\in\mathcal{H}\upharpoonright  A$, there is $B\in \cH\upharpoonright  C$ such that  $B/b\in D_{n}$ for every $b\in \finkfin\upharpoonright B$ with $\depth_A(b)=n$. So, defining $A_a$ so that $r_{|a|}(A_a) = a$ and $A_a/a = B/a$ if $a\in  \finkfin\upharpoonright B$, and choosing any $A_a\in\mathcal D_a$ if $a\notin  \finkfin\upharpoonright B$, the family $\mathcal A=\{A_a\}_{a\in \finkfin\upharpoonright A}$ verifies that $B$ is a diagonalization of $(D_a)$.
\end{proof}


\bigskip
If $A,B\in \finkinf$, we write $A\leq^* B$ to express that $A$ is almost a condensation of $B$, that is, except for a finite number of its elements, 
every element of $A$ belongs to  $[B]$.

\begin{lem}\label{distributive}
A coideal $\cH$ is semiselective iff it is $\sigma$-distributive with respect to $\leq^\ast$.
\end{lem}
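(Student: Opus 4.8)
The plan is to read ``$(\cH,\leq^\ast)$ is $\sigma$-distributive'' as: the intersection of countably many $\leq^\ast$-dense open subsets of $\cH$ is $\leq^\ast$-dense (equivalently, forcing with $(\cH,\leq^\ast)$ adds no new $\omega$-sequence of ground-model ordinals). I would prove both implications by going through the reformulation of semiselectivity in Proposition~\ref{semiequiv}, and by using throughout the elementary \emph{translation principle} that, because $\cH$ is closed under finite changes, one may pass freely between $\leq$ and $\leq^\ast$: a $\leq^\ast$-dense open $D\subseteq\cH$ restricts, after a harmless modification of finitely many initial blocks, to a $\leq$-dense open subset of $\cH\cap[n,A]$ for every $A\in\cH$ and every $n$, and conversely; in particular the forcings $(\cH\restriction A,\leq)$ and $(\{X\in\cH:X\leq^\ast A\},\leq^\ast)$ are equivalent, so $\sigma$-distributivity passes below any fixed $A$ and may be invoked in whichever of the two forms is convenient.

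For $\cH$ semiselective $\Rightarrow$ $(\cH,\leq^\ast)$ $\sigma$-distributive: given $\leq^\ast$-dense open sets $(D_n)_{n\in\omega}$ in $\cH$ and $A\in\cH$, set $\cD_a:=D_{|a|}\cap\cH\cap[\depth_A(a),A]$ for each $a\in\finkfin\restriction A$. By the translation principle each $\cD_a$ is dense open in $\cH\cap[\depth_A(a),A]$ (density: choose $C'\leq^\ast C$ in $D_{|a|}$ and modify the first $\depth_A(a)$ blocks of $C'$ to those of $C$, staying in $\cH$ and, since finite changes are $\leq^\ast$-reversible, in $D_{|a|}$). Definition~\ref{semiselective} (applied with the given $A$ playing both roles) then produces $B\in\cH\restriction A$ diagonalizing $\{\cD_a\}$, so there are $A_a\in\cD_a$ with $[a,B]\subseteq[a,A_a]$ for all $a\in\finkfin\restriction B$. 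Now for each $m$ the segment $a=r_m(B)$ lies in $\finkfin\restriction B$ and $B\in[a,B]\subseteq[a,A_a]$, whence $B\leq A_a\in\cD_a\subseteq D_{|a|}=D_m$; since $D_m$ is $\leq^\ast$-downward closed, $B\in D_m$. As $m$ is arbitrary, $B\in\bigcap_m D_m$ and $B\leq^\ast A$, so $\bigcap_m D_m$ is $\leq^\ast$-dense.

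For the converse, $(\cH,\leq^\ast)$ $\sigma$-distributive $\Rightarrow$ $\cH$ semiselective: fix $A\in\cH$, a family $\{\cD_a\}_{a\in\finkfin\restriction A}$ with each $\cD_a$ dense open in $\cH\cap[\depth_A(a),A]$, and $C\in\cH\restriction A$. Use Lemma~\ref{filtered} to get $(A_n)_n\subseteq\cH\restriction A$ with $A_n\in\cD_a$ whenever $\depth_A(a)=n$; it then suffices to find $B\in\cH\restriction C$ with $[a,B]\subseteq[a,A_{\depth_A(a)}]$ for every $a\in\finkfin\restriction B$, for then $B$ diagonalizes $\{\cD_a\}$ with witnesses $A_a:=A_{\depth_A(a)}$. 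For each $n$ let $E_n$ be the set of $X\in\cH$ with $X\leq^\ast C$ such that $[a,X]\subseteq[a,A_n]$ for every $a\in\finkfin\restriction X$ with $\depth_A(a)=n$. Openness of $E_n$ under $\leq^\ast$ is immediate; for density one is given $Y\leq^\ast C$, lists the finitely many segments $a$ of depth $n$ compatible with $Y$, and drives $Y$ into $[a,A_n]$ for each of them in turn using property~(c) of a coideal (the \textbf{A3}$\bmod\cH$ amalgamation), with the usual extra care that the finitely many low-depth segments be fixed by a direct initial-block choice rather than a $\leq^\ast$-argument. Then $\sigma$-distributivity yields $B'\in\bigcap_n E_n$ below $C$, and a final finite modification gives $B\leq C$ in $\cH$ with the required property.

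I expect the density of the sets $E_n$ (equivalently, that ``having decided the $a$-component'' is $\leq^\ast$-dense open) to be the main obstacle: this is precisely a one-step fusion/amalgamation, and it is where \textbf{A3}$\bmod\cH$ is essential; everything else is the routine bookkeeping of translating between $\leq$ and $\leq^\ast$, between indexing ``by depth $n$'' and ``by segments'', and of absorbing the finitely many harmless initial-block modifications licensed by $\cH$ being closed under finite changes.
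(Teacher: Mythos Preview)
Your forward direction is correct and essentially the same idea as the paper's; the paper simply invokes Proposition~\ref{semiequiv} rather than Definition~\ref{semiselective} and observes in one line that a diagonalization $B$ of the $D_n$'s within $A$ lies in $\bigcap_n D_n\cap W_A$. Your indexing by $|a|$ in place of $\depth_A(s)$ is a cosmetic variation.

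For the converse, the paper takes a much shorter route than you do. Working via Proposition~\ref{semiequiv}, one is given $A$ and $(D_n)$ with each $D_n$ dense open in $\cH\cap[n,A]$; $\sigma$-distributivity yields $B\in\bigcap_n D_n$ with $B\leq^* A$, and after the harmless finite modification to $B\leq A$ one simply observes that for $s\in\finkfin\upharpoonright B$ with $\depth_A(s)=n$ we have $B/s\leq^* B\in D_n$, hence $B/s\in D_n$. There is no Lemma~\ref{filtered}, no auxiliary sets $E_n$, and no appeal to \textbf{A3}$\bmod\cH$.

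Your converse, by contrast, has a real gap. You assert that ``openness of $E_n$ under $\leq^*$ is immediate,'' but it is not. The defining condition of $E_n$ quantifies over $a\in\finkfin\restriction X$ and speaks of $[a,X]$; both are sensitive to finite changes in $X$. Passing from $X\in E_n$ to some $X'\leq^* X$ may introduce new depth-$n$ segments $a\in\finkfin\restriction X'$ built from the finitely many altered blocks, and even for a common $a$ one does not in general have $[a,X']\subseteq[a,X]$, since $X'\leq^* X$ does not imply $X'\leq X$. The density of $E_n$ is likewise only sketched: the phrase ``with the usual extra care'' hides the real problem that you must render a given $Y\leq^* C$ compatible in $\cH$ with the \emph{fixed} witness $A_n$ produced by Lemma~\ref{filtered}, and a coideal gives you no such compatibility for free. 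The paper avoids all of this by never fixing witnesses $A_n$ and instead letting the $D_n$ themselves be the $\leq^*$-dense open sets to which $\sigma$-distributivity is applied.
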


\begin{proof} 
      Let $\mathcal H$ be a semiselective  coideal. Let $\cD=\{D_n:n\in\omega\}$ be a family of $\leq^\ast$-dense open subsets of $\cH$ and let $W_A:=\{B\in\cH:B\leq^\ast A\}$. Notice that each  $D_n$ is dense open in $\mathcal H$. Clearly, a diagonalization   $B\in\cH$  of the $D_n$'s within $A$ is an element of both $\bigcap\cD$ and $W_A$.

        Conversely, given $A\in\cH$ and $\cD$ as in Proposition \ref{semiequiv},  let  $B\in\cH$  such that $B\in\bigcap\cD$ and $B\leq^\ast A$. We can assume $B\in\cH\upharpoonright A$ without a loss of generality. Then, for any $s\in\finkfin\upharpoonright  B$ with $\depth_A(s)=n$, it holds that $B/s\in D_n$ since $B\in D_n$ and $B/s \leq^\ast B$. 
\end{proof}

The next theorem si also a consequence of Lemma \ref{filtered}.
\begin{thm}\label{selective is semi} If $\mathcal H\subseteq\finkinf$ is a selective coideal then $\mathcal H$ is semiselective.
\end{thm}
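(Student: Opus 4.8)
The plan is to verify Definition~\ref{semiselective} directly, using Lemma~\ref{filtered} to manufacture an appropriate $\leq$-decreasing sequence and then invoking selectivity (Definition~\ref{selective}) to diagonalize it. Fix $A\in\mathcal{H}$, a collection $\mathcal{D}=\{\mathcal{D}_a\}_{a\in\finkfin\upharpoonright A}$ with each $\mathcal{D}_a$ dense open in $\mathcal{H}\cap[\depth_A(a),A]$, and $C\in\mathcal{H}\upharpoonright A$; the goal is to produce $B\in\mathcal{H}\upharpoonright C$ that is a diagonalization of $\mathcal{D}$ in the sense of Definition~\ref{diagonal2}.

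First I would collapse the indexing by depth. For each $n$, the set $\{a\in\finkfin\upharpoonright A:\depth_A(a)=n\}$ is finite by axiom A.2.2, so $\mathcal{E}_n:=\bigcap\{\mathcal{D}_a:\depth_A(a)=n\}$ is again dense open in $\mathcal{H}\cap[n,A]$ (a finite intersection of dense open subsets there is dense open: openness is immediate, and density is chained using amalgamation mod $\mathcal{H}$). Next I would run the recursion in the proof of Lemma~\ref{filtered}, but anchored below $C$ and with the successive terms forced to descend, to obtain a sequence $C\geq A_0\geq A_1\geq\cdots$ in $\mathcal{H}$ with $A_n\in\mathcal{E}_n$ for every $n$: given $A_{n-1}\in\mathcal{H}$ with $A_{n-1}\leq C$, pass via amalgamation mod $\mathcal{H}$ to an element of $\mathcal{H}\cap[n,A]$ below it and then refine it into $\mathcal{E}_n$ using density, the openness of $\mathcal{E}_n$ guaranteeing that the later refinements stay inside it.

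Then $(A_n)_n$ is a $\leq$-decreasing sequence in $\mathcal{H}\upharpoonright A$ with $[\emptyset,A_n]\neq\emptyset$, so selectivity applied to the neighborhood $[\emptyset,A]$ yields $B\in\mathcal{H}\cap[\emptyset,A]$ diagonalizing $(A_n)$ within $A$, i.e.\ $[b,B]\subseteq[b,A_n]$ whenever $b\in\finkfin\upharpoonright B$ has $\depth_A(b)=n$. Taking $b=\emptyset$ gives $B\leq A_0\leq C$, hence $B\in\mathcal{H}\upharpoonright C$; and setting $A_a:=A_{\depth_A(a)}$ for $a\in\finkfin\upharpoonright A$ we have $A_a\in\mathcal{E}_{\depth_A(a)}\subseteq\mathcal{D}_a$ and $[a,B]\subseteq[a,A_a]$ for every $a\in\finkfin\upharpoonright B$, so the family $\{A_a\}$ witnesses that $B$ is a diagonalization of $\mathcal{D}$, which is exactly what Definition~\ref{semiselective} requires.

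I expect the real work to sit in the second step. Lemma~\ref{filtered} as stated produces a sequence that need be neither $\leq$-decreasing nor situated below $C$, while each $\mathcal{D}_a$ is localized to the neighborhood $[\depth_A(a),A]$, whose stem $r_n(A)$ interacts awkwardly with passing to condensations; making the recursion genuinely descend while each term still lands in the correct dense open set --- threading the interplay of $\depth_A$, the stems $r_n(A)$, openness of the $\mathcal{D}_a$, and amalgamation mod $\mathcal{H}$ --- is the delicate point. Once the decreasing sequence is in hand, the appeal to selectivity and the verification of Definition~\ref{diagonal2} are routine. An alternative that absorbs part of this bookkeeping is to verify instead the reformulation of Proposition~\ref{semiequiv}: feed Lemma~\ref{filtered} the diagonal family $\mathcal{D}_a:=D_{\depth_A(a)}$, thin to a decreasing sequence, and apply selectivity as above.
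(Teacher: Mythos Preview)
Your approach is essentially the one the paper takes: invoke Lemma~\ref{filtered} to produce a sequence $(A_n)_n$ with $A_n$ in the intersection of the $\mathcal{D}_a$'s of depth $n$, and then apply selectivity to diagonalize. The paper's proof is terse and simply writes ``Using Lemma~\ref{filtered} we can build $(A_n)$ \dots\ By selectivity, there exists $C$ which diagonalizes $\mathcal{A}$''; you are in fact more careful than the paper in flagging that the sequence must be made $\leq$-decreasing (which selectivity requires but Lemma~\ref{filtered} as stated does not deliver) and anchored below the given $C$, and in isolating the interaction with the stems $r_n(A)$ as the point needing work.
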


\begin{proof} Consider $A\in\mathcal H$ and let $\mathcal{D}=(\mathcal D_a)_{a\in \finkfin\upharpoonright A}$ be such that each $\mathcal{D}_a$ is dense open in $\mathcal H\cap[\depth_A(a),A]$. Fix $B\in\mathcal H\cap[a,A]$. Then  $\mathcal{D}_a$ is dense open in $\mathcal H\cap[a,B]$, for all $a\in \finkfin\upharpoonright B$. Using lemma \ref{filtered} we can build $\mathcal{A}=(A_n)_{n\in\mathbb N}\subseteq\mathcal{H}\upharpoonright B$ such that $A_n\in\mathcal{D}_a$ for every $a\in\finkfin\upharpoonright B$ with $\depth_A(a)=n$. By selectivity, there exists $C\in\mathcal H\cap[a,B]$ which diagonalizes $\mathcal{A}$. Hence, $\mathcal H$ is semiselective.
\end{proof}

\begin{defn}\label{ultra}
 A family  $\mathcal U\subseteq \finkinf$ is an \textbf{ultrafilter} if it satisfies the following:
\begin{itemize}
\item[{(a)}] $\mathcal U$ is a \textit{filter} on $( \finkinf,\leq)$ invariant under finite changes. That is:  

\begin{enumerate}
\item If $A\in \mathcal U$ and $A\triangle B$ is finite, then $B\in \mathcal U$.
\item For all $A,B\in\finkinf$, if $A\in\mathcal U$ and $A\leq B$ then $B\in\mathcal U$.
\item For all $A,B\in\mathcal U$, there exists $C\in\mathcal U$ such that $C\leq A$ and $C\leq B$.
\end{enumerate}

\item[{(b)}] If $\mathcal U'\subseteq\finkinf$ is a filter on $(\finkinf,\leq)$ and $\mathcal U\subseteq \mathcal U'$ then $\mathcal U'=\mathcal U$. That is, $\mathcal U$ is a \textit{maximal filter} on $(\finkinf,\leq)$.

\item[{(c)}]  (${\bf A3}\mod\mathcal U$) For all $A\in\mathcal U$ and $a\in\finkfin\upharpoonright  A$ with $\operatorname{depth}_{A}(a)=n$, the following holds: 

\begin{itemize}
		\item[(c.1)] $(\forall B\in [n,A]\cap\mathcal U)\ \ ([a,B]\cap\mathcal U\neq\emptyset)$.
		\item[(c.2)] $(\forall B\in [a,A]\cap\mathcal U)\ \ (\exists A'\in [n,A]\cap U)\ \
		([a,A']\subseteq [a,B])$.
\end{itemize}

\item[{(d)}]  (${\bf A4}\mod\mathcal U$) Let $A\in\mathcal U$ and $a\in\finkfin\upharpoonright  A$ be given. For all $\mathcal O \subseteq \FIN_k^{|a|+1}$ then there exists $B\in [\depth_A(a),A]\cap\mathcal U$ such that $r_{|a|+1}[a,B]\subseteq\mathcal O$ or $r_{|a|+1}[a,B]\cap\mathcal O=\emptyset$.

\end{itemize}

\end{defn}

An ultrafilter is in particular a coideal, so using definifions \ref{semiselective} and \ref{selective} we can consider semiselective and selective ultrafilters on $\finkinf$.




Will show below, in section \ref{selandstab}, that for ultrafilters on $\finkinf$ semiselectivity is equivalent to selectivity. This is also the case for ultrafilters on the space $\ninf$ as was shown by Farah in \cite{Fa}.

\section{Ramsey subsets of $\finkinf$ }
Let $\mathcal H$ be a semiselective coideal in $\FIN_k^\infty$.
 
 \begin{lem}\label{openram}
 Let $\mathcal O\subseteq \FIN_k^\infty$ be open in the metric topology. Then, $\mathcal O$ is $\mathcal H$-Ramsey.
 \end{lem}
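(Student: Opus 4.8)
The plan is to prove this by a fusion-type argument adapted to the semiselective coideal $\mathcal H$, mimicking the classical proof that open sets are Ramsey in the Ellentuck space. Fix a nonempty $[a,A]$ with $A\in\mathcal H$. I want to produce $B\in[a,A]\cap\mathcal H$ that is "decided" in the sense that for every $b\in\finkfin\upharpoonright B$ with $a\sqsubseteq b$, either $[b,B]\subseteq\mathcal O$ or $[b,B]\cap\mathcal O=\emptyset$; once that is done, looking at $b=a$ itself gives the conclusion (note that since $\mathcal O$ is metrically open, if some $B'\in[a,B]$ lands in $\mathcal O$, then a whole basic metric neighborhood $[s]$ around $B'$ is inside $\mathcal O$, and $[s,B]\subseteq\mathcal O$, so the $[b,B]\subseteq\mathcal O$ alternative does get triggered whenever $\mathcal O$ meets $[a,B]$ densely).

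The key mechanism is to define, for each $b\in\finkfin\upharpoonright A$, a set $\mathcal D_b\subseteq\mathcal H\cap[\depth_A(b),A]$ consisting of those $C$ for which the one-step extension question is decided, i.e. for which there is a decision about whether $[c,C]\subseteq\mathcal O$ as $c$ ranges over the immediate $\sqsubseteq$-successors of $b$ realized in $C$ — more precisely, those $C$ such that $r_{|b|+1}[b,C]$ is homogeneous for the partition "does this $(|b|+1)$-approximation have an extension in $\mathcal O$". I will check $\mathcal D_b$ is dense open in $\mathcal H\cap[\depth_A(b),A]$: openness is immediate from the definition (passing to a condensation preserves homogeneity of $r_{|b|+1}[b,\cdot]$), and density is exactly $\mathbf{A4}\bmod\mathcal H$, which is part (d) of the coideal definition. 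Then apply semiselectivity (in the form of Definition \ref{semiselective}, with $C=A$) to the collection $\mathcal D=\{\mathcal D_b\}_{b\in\finkfin\upharpoonright A}$ to obtain $B\in\mathcal H\upharpoonright A$, indeed $B\in[a,A]\cap\mathcal H$ after restricting appropriately, which diagonalizes $\mathcal D$: for every $b\in\finkfin\upharpoonright B$, $[b,B]\subseteq[b,A_b]$ with $A_b\in\mathcal D_b$, so $B$ inherits the one-step homogeneity at every node.

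With such a $B$ in hand, the last step is to upgrade one-step homogeneity into the global decision. This is the standard "no first obstruction" argument: suppose $[a,B]\cap\mathcal O\neq\emptyset$, say $D\in[a,B]\cap\mathcal O$; since $\mathcal O$ is metrically open there is $n$ with $[r_n(D)]\subseteq\mathcal O$, hence $[r_n(D),B]\subseteq\mathcal O$. Now walk down from $r_n(D)$ to $a$ one approximation at a time, using at each node the one-step homogeneity of $B$: if $[c,B]\subseteq\mathcal O$ and $b$ is the immediate predecessor of $c$, then $r_{|b|+1}[b,B]$ must be the "has an extension in $\mathcal O$" side of the partition, which by the design of $\mathcal D_b$ forces $[c',B]\subseteq\mathcal O$ for all immediate successors $c'$ of $b$ in $B$, and one more application pushes this to $[b,B]\subseteq\mathcal O$. (Here I should be slightly careful that "being the $\mathcal O$-side for one successor" gives the same side for all successors — this is precisely what the homogeneity of $r_{|b|+1}[b,B]$ buys, together with the observation that $[c',B]\subseteq\mathcal O$ is equivalent to $c'$ lying on the $\mathcal O$-side, using again that $\mathcal O$ is open.) Iterating down to $a$ yields $[a,B]\subseteq\mathcal O$. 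If instead $[a,B]\cap\mathcal O=\emptyset$ we are already done.

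The main obstacle I anticipate is the bookkeeping in the downward walk — specifically making rigorous the equivalence "$[c,B]\subseteq\mathcal O$ iff $c$ is on the positive side of the partition defining $\mathcal D_b$", which relies on choosing the partition in $\mathbf{A4}\bmod\mathcal H$ to be exactly $\{c : (\exists C\in[c,B])\, C\in\mathcal O\}$ versus its complement, and on openness of $\mathcal O$ to pass from "$c$ has some extension in $\mathcal O$" to "every extension of $c$ in $B$ is in $\mathcal O$." One has to verify that after diagonalization this partition computed inside $B$ agrees with the one used to build $\mathcal D_b$; this is handled by the openness clause of $\mathcal D_b$ (homogeneity is preserved under $\leq$) and by noting that a condensation cannot create new $\mathcal O$-members among approximations that were on the negative side. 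Apart from this, everything is a direct invocation of semiselectivity and the $\bmod\,\mathcal H$ axioms, so the proof should be short.
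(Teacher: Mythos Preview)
There is a genuine gap in the downward walk. The partition you feed into $\mathbf{A4}\bmod\mathcal H$ at a node $b$ must be a fixed partition of $\FIN_k^{|b|+1}$, chosen \emph{before} $B$ is built; in particular it cannot refer to $B$. But the property you want to propagate, namely $[c,B]\subseteq\mathcal O$, is a property of the pair $(c,B)$. Consider the two natural candidates. If the coloring is $c\mapsto[\,[c]\cap\mathcal O\neq\emptyset\,]$, then from $[c_0,B]\subseteq\mathcal O$ you learn that $c_0$ is on the positive side, hence by homogeneity every successor $c'$ of $b$ in $B$ is positive---but ``$[c']\cap\mathcal O\neq\emptyset$'' does not yield $[c',B]\subseteq\mathcal O$, and the walk stalls immediately. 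If instead the coloring is $c\mapsto[\,c\in\mathcal F\,]$ for a set $\mathcal F$ with $\mathcal O=\bigcup_{s\in\mathcal F}[s]$, the first step works: from $r_n(D)\in\mathcal F$ and homogeneity at $r_{n-1}(D)$ you get $[r_{n-1}(D),B]\subseteq\mathcal O$. But at the next level you only know $[r_{n-1}(D),B]\subseteq\mathcal O$, not $r_{n-1}(D)\in\mathcal F$, so you cannot invoke homogeneity at $r_{n-2}(D)$; the walk breaks after one step. Your suggested partition $\{c:(\exists C\in[c,B])\,C\in\mathcal O\}$ is circular, since it mentions the very $B$ you are constructing; and if you replace $B$ there by the $A_b\in\mathcal D_b$ used at that stage, the positive side may shrink when you pass from $A_b$ to its condensation $B$, destroying the homogeneity you need.

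The paper avoids this by replacing a single pointwise coloring with the accept/reject calculus: $A$ \emph{accepts} $s$ if every $C\in[s,A]\cap\mathcal H$ has an initial segment in $\mathcal F$, and $A$ \emph{rejects} $s$ if no condensation accepts. These notions are hereditary under $\leq$ and interact correctly with one-step extensions (Lemma~\ref{comb forcing}\,(\ref{accepts longer}),(\ref{not accept longer})). Semiselectivity is then applied \emph{twice}: once to produce $D$ deciding every $b$ (the Claim preceding Lemma~\ref{galvinlocal}), and---if $D$ rejects $\emptyset$---a second time to produce $B\leq D$ rejecting every $b$ (Lemma~\ref{galvinlocal}). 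That two-pass structure is exactly what absorbs the dependence on $B$ that a single diagonalization against a level-by-level coloring cannot.
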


 We adapt the ideas of Nash-Williams, Galvin and Prikry and Farah to this context.
 Before proceeding with the proof, we give some definitions.
 
 \begin{defn}
 Fix a family $\mathcal F\subseteq \FIN_k^{<\infty}$.
 \begin{enumerate}
\item  $A\in \mathcal H$ accepts $s\in \FIN_k^{<\infty}$ if for every $B\in [s,A]\cap \mathcal H$ there is $n$ such that $r_n (B)\in \mathcal F$.
\item $A$ rejects $s$  if no $B\in [s,A]\cap \mathcal H$ accepts $s$.
\item $A$ decides $s$ if it accepts $s$ or rejects $s$.
\end{enumerate}
 \end{defn}
 
 The following facts follow from the definition.
 
 \begin{lem}\label{comb forcing}
 Fix a family $\mathcal F\subseteq \finkfin$.
  \begin{enumerate}
\item  $A\in \mathcal H$ accepts and rejects  $s\in \FIN_k^{<\infty}$ implies that $[s,A]$ is empty.
\item\label{accept heir} $A$ accepts $s$  and  $B\leq A$, $B\in \mathcal H$, then $B$ accepts $s$.
\item\label{reject heir} $A$ rejects $s$  and  $B\leq A$, $B\in \mathcal H$, then $B$ rejects $s$.
\item\label{some decide} For every $A\in \mathcal H$ and for every $s\in \FIN_k^{<\infty}$ with $s\sqsubset A$, there is $B\in[s,A]\cap \mathcal H$ that decides $s$.
\item\label{accepts longer} If $A$ accepts $s$, then it accepts every $t\in r_{|s|+1}([s,A])$.
\item\label{not accept longer} If $A$ rejects $s$, then there exists $B\in [s,A]\cap \mathcal H$ such that $A$ does not accept any $t\in  r_{|s|+1}([s,B])$.
\end{enumerate}
\end{lem}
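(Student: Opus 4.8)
The plan is to prove the six items of Lemma~\ref{comb forcing} directly from the definitions of \emph{accepts}, \emph{rejects}, and \emph{decides}, using the coideal axioms (in particular \textbf{A3}$\mod\mathcal H$ and \textbf{A4}$\mod\mathcal H$) that $\mathcal H$ enjoys. This is the standard Galvin--Prikry/Nash-Williams ``combinatorial forcing'' bookkeeping, transplanted to the space $\finkinf$ and relativized to $\mathcal H$; none of the steps is deep, but several rely on having $[s,A]\neq\emptyset$ available, which is where $\mathbf{A3}\mod\mathcal H$ does the work.

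First I would dispatch the easy monotonicity facts. For (1): if $A$ both accepts and rejects $s$, then in particular $A$ does not accept $s$ (since $A\in[s,A]\cap\mathcal H$ would be a witness to rejection failing) unless $[s,A]\cap\mathcal H$ contains no member at all; unwinding, ``accepts'' quantifies over $B\in[s,A]\cap\mathcal H$ while ``rejects'' says no such $B$ accepts $s$, and $A\le A$, $A\in\mathcal H$, so if $A$ accepts $s$ then $A$ is itself a member of $[s,A]\cap\mathcal H$ accepting $s$, contradicting rejection — hence $[s,A]\cap\mathcal H=\emptyset$, and since $A\in\mathcal H$ this forces $[s,A]=\emptyset$. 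For (2) and (3): both are immediate from the fact that $[s,B]\cap\mathcal H\subseteq[s,A]\cap\mathcal H$ when $B\le A$, $B\in\mathcal H$; acceptance is a $\forall$ over this set (so it shrinks to a subset and still holds), and rejection is a ``no witness in the set'' statement (so passing to a subset preserves it). Item (5) is similar: if $A$ accepts $s$ and $t\in r_{|s|+1}([s,A])$, then $[t,A]\cap\mathcal H\subseteq[s,A]\cap\mathcal H$ because every $B\in[t,A]$ has $t=r_{|s|+1}(B)$ hence $s=r_{|s|}(B)$ and $B\le A$; so the $\forall$-statement defining acceptance of $s$ applies to all such $B$, giving acceptance of $t$.

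Next, item (4), the crucial existence statement: given $A\in\mathcal H$ and $s\sqsubset A$, I want $B\in[s,A]\cap\mathcal H$ deciding $s$. If some $B\in[s,A]\cap\mathcal H$ accepts $s$, we are done. Otherwise no such $B$ accepts $s$, which is precisely the statement that $A$ rejects $s$; then $B=A$ itself decides $s$ (and $A\in[s,A]$ since $s\sqsubset A$, using $\mathbf{A3}\mod\mathcal H$ to know $[s,A]\neq\emptyset$). Finally item (6), which is the genuine combinatorial heart and where I expect the main obstacle: suppose $A$ rejects $s$ with $\depth_A(s)=n$; I claim there is $B\in[s,A]\cap\mathcal H$ such that $A$ (equivalently $B$) accepts no one-step extension $t\in r_{|s|+1}([s,B])$. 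The idea is to apply $\mathbf{A4}\mod\mathcal H$ with the ``color'' $\mathcal O=\{t\in\FIN_k^{|s|+1}: A\text{ accepts }s{}^\frown\text{(the new block)}\}$ read off the last coordinate, obtaining $B\in[n,A]\cap\mathcal H$ with $r_{|s|+1}[s,B]$ monochromatic. If it lands entirely inside ``$A$ accepts'', then by (5)-type reasoning plus the rejection of $s$ we derive that $A$ in fact accepts $s$ (every sufficiently long $B'\in[s,B]\cap\mathcal H$ eventually meets $\mathcal F$ via its $(|s|+1)$-st approximation), contradicting that $A$ rejects $s$; hence the monochromatic value is ``does not accept'', which is exactly the conclusion. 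The delicate points are: (i) matching the depth/amalgamation requirements so that $\mathbf{A4}\mod\mathcal H$ is applicable and the resulting $B$ lies in $[s,A]$, not merely $[n,A]$ — here one uses $\mathbf{A3}\mod\mathcal H$ and $\depth_A(s)=n$ to pass between $[n,A]$ and $[s,A]$; and (ii) the inference ``$A$ accepts every one-step extension of $s$ $\implies$ $A$ accepts $s$'', which must be argued carefully: given $B'\in[s,B]\cap\mathcal H$, its approximation $t=r_{|s|+1}(B')$ is accepted, so $B'\in[t,A]\cap\mathcal H$ meets $\mathcal F$ at some level, witnessing acceptance of $s$. I would write (ii) out as a short self-contained sublemma since it is reused, and otherwise keep the verifications terse.
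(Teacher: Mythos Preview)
Your proposal is correct and follows essentially the same approach as the paper: the paper dismisses items (1)--(5) as following from the definitions and proves only item (6), using precisely your strategy of applying $\mathbf{A4}\bmod\mathcal H$ to the coloring $\mathcal O=\{t: A\text{ accepts }t\}$ and deriving a contradiction from the homogeneous-in-$\mathcal O$ case by showing $B$ would accept $s$. One small quibble: in your argument for (1), the claim that $A\in[s,A]$ requires $s\sqsubset A$, which is not assumed; the clean way is to take any $B\in[s,A]\cap\mathcal H$, observe $B$ accepts $s$ by your item (2), and contradict rejection---but this still only yields $[s,A]\cap\mathcal H=\emptyset$, and passing from there to $[s,A]=\emptyset$ is not automatic from the axioms (though in practice the lemma is only applied when $s\in\finkfin\upharpoonright A$, where this is harmless).
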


We only prove the last fact, using that if $c: \FIN_k^n\to 2$ there is $X\in \mathcal H$ such that $c$ is constant on $r_n[X]$.
Suppose $A$ rejects $s$, and let $\mathcal O=\{t\in \FIN_k^{<\infty}: A \mbox{ accepts } t\}$. By {\bf A.4} mod $\mathcal H$, there exists $B\in\mathcal H\cap [s,A]$ such that $r_{|s|+1}[s,B]\subseteq \mathcal O$ or $r_{|s|+1}[s,B]\subseteq \mathcal O^c$. If the first alternative holds then take $C\in\mathcal H\cap [s,B]$. Let $b=r_{|s|+1}(C)$. Then $b\in\mathcal O$ and therefore $A$ accepts $b$. Since $C\in[b,A]$ then there exists $n$ such that $r_n(C)\in\mathcal F$. Therefore $B$ accepts $s$, because $C$ is arbitrary. But this contradicts that $A$ rejects $s$. Hence, $r_{|s|+1}[s,B]\subseteq \mathcal O^c$ and we are done.

 \begin{claim}
 Given $A\in\mathcal H$, there exists $D\in \mathcal H  \upharpoonright  A$ which decides every $b\in \finkfin  \upharpoonright  D$.
\end{claim}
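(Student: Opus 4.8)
The plan is to build $D$ by a fusion argument, diagonalizing the operation "decide $b$" over all $b \in \finkfin \upharpoonright D$, using the semiselectivity of $\mathcal H$ together with fact \eqref{some decide} of Lemma \ref{comb forcing}. First I would observe that, by fact \eqref{some decide}, for every $A' \in \mathcal H$ and every $b \in \finkfin \upharpoonright A'$ with $b \sqsubset A'$, there is $B \in [b,A'] \cap \mathcal H$ that decides $b$; moreover, by facts \eqref{accept heir} and \eqref{reject heir}, once $b$ is decided by some $B$, it stays decided the same way by every $C \leq B$ with $C \in \mathcal H$. So for each $a \in \finkfin \upharpoonright A$ set
$$
\mathcal D_a = \{ B \in \mathcal H \cap [\depth_A(a),A] : B \text{ decides } a \}.
$$
I claim each $\mathcal D_a$ is dense open in $\mathcal H \cap [\depth_A(a),A]$: openness (downward closure under $\leq$ inside $\mathcal H$) is exactly facts \eqref{accept heir}--\eqref{reject heir}, and density follows from fact \eqref{some decide} applied inside any $[\depth_A(a),A']$, after using amalgamation ($\mathbf{A3}\bmod\mathcal H$) to pass from a condition in $[\depth_A(a),A']$ to one in $[a, A'']$ where the decision can be made, then noting the decision propagates.

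Next I would apply semiselectivity (Definition \ref{semiselective}, or the equivalent reformulation in Proposition \ref{semiequiv}) to $A$ and the family $\mathcal D = \{\mathcal D_a\}_{a \in \finkfin \upharpoonright A}$ with, say, $C = A$, to obtain $D \in \mathcal H \upharpoonright A$ that is a diagonalization of $\mathcal D$: there is a witnessing family $\mathcal A = \{A_a\}_{a \in \finkfin \upharpoonright A}$ with $A_a \in \mathcal D_a$ such that $[a,D] \subseteq [a,A_a]$ for every $a \in \finkfin \upharpoonright D$. Now fix an arbitrary $b \in \finkfin \upharpoonright D$ and let $n = \depth_A(b)$. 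Since $A_b \in \mathcal D_b$, $A_b$ decides $b$. I must transfer this decision to $D$. Because $[b,D] \subseteq [b,A_b]$ and $D \in \mathcal H$, if $A_b$ accepts $b$ then every $B \in [b,D] \cap \mathcal H \subseteq [b,A_b] \cap \mathcal H$ contains some $r_n(B) \in \mathcal F$, so $D$ accepts $b$ as well; and if $A_b$ rejects $b$, then since $D \in [b, A_b] \cap \mathcal H$ (noting $\depth_{A_b}(b)$ is at most the depth at which the diagonalization kicks in, and $[b,D]\ne\emptyset$ because $b \sqsubset D$), fact \eqref{reject heir} gives that $D$ rejects $b$. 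Either way $D$ decides $b$.

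The main obstacle I anticipate is the bookkeeping in the transfer step: making sure that when $A_b$ rejects $b$ we genuinely have $D \leq A_b$ with $D \in \mathcal H$ and $[b,D] \neq \emptyset$, so that fact \eqref{reject heir} (resp. the definition of "rejects") applies to $D$ itself rather than merely to elements of $[b,D]$ — in particular one should be slightly careful that "$D$ rejects $b$" is the statement "no $B \in [b,D] \cap \mathcal H$ accepts $b$", which is inherited from "no $B \in [b,A_b] \cap \mathcal H$ accepts $b$" precisely because $[b,D] \subseteq [b,A_b]$. A secondary subtlety is the density argument for $\mathcal D_a$: fact \eqref{some decide} as stated produces a deciding set in $[s,A]$, so to see $\mathcal D_a$ is dense inside $[\depth_A(a),A']$ one invokes amalgamation to find $A'' \in [\depth_A(a),A']$ with $[a,A''] \neq \emptyset$, then decides $a$ there, then (using openness) restricts back into $[\depth_A(a),A']\cap\mathcal H$; this is routine given $\mathbf{A3}\bmod\mathcal H$ but worth spelling out. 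Everything else is a direct unwinding of the definitions, so the proof is short once the density/openness of the $\mathcal D_a$ is checked.
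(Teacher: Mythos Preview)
Your approach is essentially identical to the paper's: define $\mathcal D_a=\{B\in\mathcal H\cap[\depth_A(a),A]:B\text{ decides }a\}$, verify density and openness via parts \ref{accept heir}, \ref{reject heir}, \ref{some decide} of Lemma \ref{comb forcing}, diagonalize by semiselectivity, and transfer the decision to $D$ using $[b,D]\subseteq[b,A_b]$. One small slip: for $b\in\finkfin\upharpoonright D$ you do not have $b\sqsubset D$ nor $D\in[b,A_b]$ in general, but your own corrective remark (that ``$D$ rejects $b$'' follows directly from $[b,D]\cap\mathcal H\subseteq[b,A_b]\cap\mathcal H$) is the right fix and is exactly how the paper's invocation of parts \ref{accept heir}--\ref{reject heir} should be read.
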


\begin{proof} For every $a\in \finkfin\upharpoonright A$ define
$$\mathcal D_a=\{C\in\mathcal H\cap [\depth_A(a),A]\colon C {\mbox{ decides }}a\}$$
By parts \ref{accept heir}, \ref{reject heir} and \ref{some decide} of Lemma \ref{comb forcing} each $\mathcal D_a$ is dense open in $\mathcal H\cap [\depth_A(a),A]$. By semi-\linebreak selectivity, there exists $D\in \mathcal{H}\upharpoonright  A$ which diagonalizes the collection $(\mathcal D_a)_{a\in \finkfin\upharpoonright A}$. By parts \ref{accept heir} and \ref{reject heir} of Lemma \ref{comb forcing}, $D$ decides every $a\in\finkfin  \upharpoonright  D$.
\end{proof}

The following is an abstract version of the semisective Galvin lemma (see \cite{Ga,Fa}).

\begin{lem}[Semiselective Galvin's lemma for $\finkinf$]\label{galvinlocal}
Given $\mathcal F\subseteq\finkfin$, a semiselective coideal $\mathcal H\subseteq \finkinf$, and $A\in\mathcal H$, there exists $B\in \mathcal{H}\upharpoonright  A$ such that one of the following holds:
\begin{enumerate}
\item $\finkfin  \upharpoonright  B\cap\mathcal F=\emptyset$, or
\item $\forall C\in [\emptyset,B]$ $(\exists \ n\in \mathbb{N})$ $(r_n(C)\in\mathcal F)$.
\end{enumerate}
\end{lem}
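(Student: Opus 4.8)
The plan is to mimic the classical proof of Galvin's lemma, but localized to the semiselective coideal $\mathcal H$, using the machinery of acceptance and rejection developed in Lemma \ref{comb forcing} together with the Claim that every $A\in\mathcal H$ has a condensation $D$ that decides every $b\in\finkfin\upharpoonright D$. First I would fix $\mathcal F$, $\mathcal H$, and $A\in\mathcal H$, and apply the Claim to obtain $D\in\mathcal H\upharpoonright A$ deciding every $b\in\finkfin\upharpoonright D$. Now there are two cases according to whether $D$ accepts the empty sequence $\emptyset$ or rejects it (it decides $\emptyset$ since $\emptyset\sqsubset D$).

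If $D$ accepts $\emptyset$, then by definition every $C\in[\emptyset,D]\cap\mathcal H$ has some $n$ with $r_n(C)\in\mathcal F$; since $\mathcal H$ is a coideal closed under condensation and finite changes, $[\emptyset,D]\subseteq[\emptyset,D]\cap\mathcal H$ in the relevant sense, so alternative (2) holds with $B=D$. (Here I would be slightly careful: the statement of (2) quantifies over all $C\in[\emptyset,B]$, not just those in $\mathcal H$; but every $C\in[\emptyset,D]$ is a condensation of $D\in\mathcal H$, hence lies in $\mathcal H$ by clause (b) of the coideal definition, so this is fine.) The substantive case is when $D$ rejects $\emptyset$. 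Here I want to produce $B\in\mathcal H\upharpoonright D$ such that no initial segment of $B$ lies in $\mathcal F$, i.e. alternative (1). The idea is to build $B$ so that $D$ (equivalently, by heredity of rejection, $B$) rejects every $b\in\finkfin\upharpoonright B$: if $B$ rejects every such $b$, then in particular for each $b\sqsubset B$ we have $b\notin\mathcal F$ — because if $r_n(B)=b\in\mathcal F$ then $B$ itself would witness acceptance of $b$ (take the trivial search), contradicting rejection — so $\finkfin\upharpoonright B\cap\mathcal F=\emptyset$.

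To arrange that $B$ rejects all its finite approximations, I would iterate part \ref{not accept longer} of Lemma \ref{comb forcing} along a fusion/diagonalization argument inside $\mathcal H$. Concretely: for each $a\in\finkfin\upharpoonright D$ that $D$ rejects, the set $\mathcal E_a$ of $C\in\mathcal H\cap[\depth_D(a),D]$ such that $C$ does not accept any $t\in r_{|a|+1}[a,C]$ — equivalently $C$ rejects every one-step extension of $a$ within $C$ — is dense open in $\mathcal H\cap[\depth_D(a),D]$ (density is exactly fact \ref{not accept longer}; openness follows from heredity of rejection, fact \ref{reject heir}, together with fact \ref{accepts longer} read contrapositively). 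Then semiselectivity of $\mathcal H$ gives a single $B\in\mathcal H\upharpoonright D$ diagonalizing the family $(\mathcal E_a)_a$, and one checks by induction on $|b|$ that $B$ rejects every $b\in\finkfin\upharpoonright B$: the base case is rejection of $\emptyset$ (inherited from $D$ via fact \ref{reject heir}), and the inductive step uses that $B\in\mathcal E_b$ for the relevant $b$, so $B$ does not accept — hence, being a decider, rejects — each $t\in r_{|b|+1}[b,B]$.

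The main obstacle I anticipate is the bookkeeping in the diagonalization step: one must verify that the sets $\mathcal E_a$ really are dense \emph{open} in the appropriate localized neighborhoods (the openness direction needs that a condensation of a non-accepting sequence is still non-accepting, which is where fact \ref{reject heir} and the deciding property of $D$ interact), and that the diagonalization $B$ produced by semiselectivity genuinely satisfies $B\in\mathcal E_b$ for \emph{every} $b\in\finkfin\upharpoonright B$ with the right depth, not merely for one representative per depth — this is precisely what Definition \ref{diagonal2} of diagonalization of a collection $\{\mathcal D_a\}$ guarantees, so invoking the $\{\mathcal D_a\}$-version of semiselectivity (rather than the $(D_n)$-version) is essential. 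A secondary, minor point is keeping the depth functions straight: after replacing $D$ by a condensation, $\depth_D(a)$ versus $\depth_A(a)$ must be handled so that all the neighborhoods $[\depth_D(a),D]$ used in the density statements are nonempty and lie in $\mathcal H$, which follows from clause (c) ($\mathbf{A3}\bmod\mathcal H$) of the coideal definition.
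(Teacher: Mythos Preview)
Your overall approach is exactly the paper's: invoke the Claim to get a deciding $D\in\mathcal H\upharpoonright A$, finish immediately if $D$ accepts $\emptyset$, and otherwise diagonalize (via semiselectivity) the dense-open families coming from Lemma~\ref{comb forcing}(\ref{not accept longer}) to obtain $B\leq D$ rejecting every $b\in\finkfin\upharpoonright B$, which forces alternative~(1). Your induction on $|b|$ makes explicit what the paper leaves implicit, and your $\mathcal E_a$ is essentially the paper's $\mathcal D_a$.

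One step in your writeup is wrong, however. In the acceptance case you assert that ``every $C\in[\emptyset,D]$ is a condensation of $D\in\mathcal H$, hence lies in $\mathcal H$ by clause~(b) of the coideal definition.'' Clause~(b) is \emph{upward} closure: $A\in\mathcal H$ and $A\leq B$ imply $B\in\mathcal H$. A condensation $C\leq D$ of an element of $\mathcal H$ need not lie in $\mathcal H$ --- if it always did, $\mathcal H$ would be all of $\finkinf$. So your passage from ``$D$ accepts $\emptyset$'' (which, as acceptance is defined here, constrains only those $C\in[\emptyset,D]\cap\mathcal H$) to alternative~(2) (which quantifies over \emph{all} $C\in[\emptyset,B]$) is not justified by the reason you give. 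The paper glosses over this point as well; a clean repair is to drop the ``$\cap\,\mathcal H$'' from the definition of \emph{accepts} while keeping it in the definition of \emph{rejects}: every part of Lemma~\ref{comb forcing} then goes through with the same proof, and acceptance of $\emptyset$ yields~(2) outright.
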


\begin{proof} Consider $D$ as in the Claim. If $D$ accepts $\emptyset$ part (2) holds and we are done. So assume that $D$ rejects $\emptyset$ and for $a\in \finkfin  \upharpoonright  D$ define
$$\mathcal D_a=\{C\in\mathcal H\cap [\depth_A(a),D]\colon C {\mbox{ rejects every }}b\in r_{|a|+1}([a,C])\}$$if $D$ rejects $a$, and $\mathcal D_a=\mathcal H\cap [\depth_A(a),D]$, otherwise. By parts \ref{reject heir} and \ref{not accept longer} of Lemma \ref{comb forcing} each $\mathcal D_a$ is dense open in $\mathcal H\cap [\depth_A(a),D]$. By semiselectivity, choose $B\in \mathcal{H}\upharpoonright  D$ such that for all $a\in\finkfin\upharpoonright B$ there exists $C_a\in\mathcal D_a$ with $[a,B]\subseteq [a,C_a]$. For every $a\in \finkfin\upharpoonright B$, $C_a$ rejects all $b\in r_{|a|+1}([a,C_a])$. So $B$ rejects all $b\in r_{|a|+1}([a,B])$: given one such $b$, choose any $\hat{B}\in\mathcal H\cap [b,B]$. Then $\hat{B}\in\mathcal H\cap [b,C_a]$. Therefore, since $C_a$ rejects $b$, $\hat{B}$ does not accept $b$.

Hence, $B$ satisfies that $\finkfin\upharpoonright B\cap\mathcal F=\emptyset$. This completes the proof of the Lemma.
\end{proof}

\begin{notation}
$\finkfin\upharpoonright [a,B] = \{b\in\finkfin : a\sqsubseteq b\ \&\ (\exists n\geq |a|)(\exists C\in [a,B])\ b=r_n(C)\}$.

\end{notation}

In a similar way we can prove the following generalization of lemma \ref{galvinlocal}: 

\begin{lem}\label{galvinlocal2}
Given a semiselective coideal $\mathcal H\subseteq \finkinf$, $\mathcal F\subseteq\finkfin$,  $A\in\mathcal H$ and $a\in\finkfin\upharpoonright A$, there exists $B\in \mathcal H\cap [a,A]$ such that one of the following holds:
\begin{enumerate}
\item $\finkfin\upharpoonright [a,B]\cap\mathcal F=\emptyset$, or
\item $\forall C\in [a,B]$ $(\exists \ n\in \mathbb{N})$ $(r_n(C)\in\mathcal F)$.
\end{enumerate}
\end{lem}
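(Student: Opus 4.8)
The plan is to adapt the proof of Lemma~\ref{galvinlocal} essentially verbatim, keeping track of the base $a$ throughout instead of working above $\emptyset$. First I would record the relativized version of the Claim: working inside $[a,A]$, the same diagonalization argument (using parts \ref{accept heir}, \ref{reject heir}, \ref{some decide} of Lemma~\ref{comb forcing} and semiselectivity of $\mathcal H$) produces $D\in\mathcal H\cap[a,A]$ which decides every $b\in\finkfin\upharpoonright D$ with $a\sqsubseteq b$; the only change from the Claim is that the dense open sets $\mathcal D_b$ are indexed by $b\in\finkfin\upharpoonright D$ extending $a$, and for $b$ not extending $a$ we may set $\mathcal D_b=\mathcal H\cap[\depth_A(b),A]$, which does not affect the diagonalization.

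Next, if $D$ accepts $a$ then by definition of acceptance every $C\in[a,D]$ has some $r_n(C)\in\mathcal F$, so alternative (2) holds with $B=D$ and we are done. Otherwise $D$ rejects $a$, and I would run the same inductive/diagonal construction as in Lemma~\ref{galvinlocal}: for each $b\in\finkfin\upharpoonright D$ with $a\sqsubseteq b$, if $D$ rejects $b$ set
$$\mathcal D_b=\{C\in\mathcal H\cap[\depth_A(b),D]\colon C\text{ rejects every }c\in r_{|b|+1}([b,C])\},$$
and $\mathcal D_b=\mathcal H\cap[\depth_A(b),D]$ otherwise (and, as above, $\mathcal D_b=\mathcal H\cap[\depth_A(b),D]$ for $b$ not extending $a$). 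By parts \ref{reject heir} and \ref{not accept longer} of Lemma~\ref{comb forcing}, each $\mathcal D_b$ is dense open in $\mathcal H\cap[\depth_A(b),D]$, so semiselectivity gives $B\in\mathcal H\cap[a,D]\subseteq\mathcal H\cap[a,A]$ diagonalizing $(\mathcal D_b)_b$: for every $b\in\finkfin\upharpoonright B$ extending $a$ there is $C_b\in\mathcal D_b$ with $[b,B]\subseteq[b,C_b]$.

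Then I would verify by induction on $|b|$ that $B$ rejects every $b\in\finkfin\upharpoonright[a,B]$. The base case is that $B$ rejects $a$, which follows from $D$ rejecting $a$ and part \ref{reject heir} of Lemma~\ref{comb forcing} since $B\leq D$. For the inductive step, if $B$ rejects $b$ then $D$ rejects $b$ (again by \ref{reject heir}), so $C_b$ was chosen to reject every $c\in r_{|b|+1}([b,C_b])$; given $c\in r_{|b|+1}([b,B])$, any witness $\hat B\in\mathcal H\cap[c,B]$ lies in $\mathcal H\cap[c,C_b]$, hence does not accept $c$, so $B$ rejects $c$. Since $B$ rejects $a$ and every $b\in\finkfin\upharpoonright[a,B]$ is obtained from $a$ by finitely many one-step extensions inside $[a,B]$, $B$ rejects all such $b$; in particular $B$ does not accept $a$, and moreover — exactly as in Lemma~\ref{galvinlocal} — no $b\in\finkfin\upharpoonright[a,B]$ lies in $\mathcal F$ (if some $r_n(C)\in\mathcal F$ for $C\in[a,B]$, then $B$ would accept the corresponding initial segment), giving alternative (1). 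The only real subtlety, as in the original lemma, is bookkeeping: making sure the index set $\finkfin\upharpoonright[a,B]$ behaves well under the one-step-extension relation so that the induction covers every segment, and that restricting the diagonalization to $D$ rather than $A$ keeps all the $\mathcal D_b$'s dense open in the right neighborhoods; these are routine given A.3 mod $\mathcal H$ and the structure of the approximation maps.
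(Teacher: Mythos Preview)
Your proposal is correct and is exactly the approach the paper has in mind: the paper gives no separate proof of Lemma~\ref{galvinlocal2}, merely stating that it is proved ``in a similar way'' to Lemma~\ref{galvinlocal}, and your write-up carries out precisely that relativization, replacing $\emptyset$ by $a$ throughout and using semiselectivity and Lemma~\ref{comb forcing} in the same pattern. One minor remark: in your inductive step the inference ``if $B$ rejects $b$ then $D$ rejects $b$'' is justified not by part~\ref{reject heir} but rather by combining the Claim (so $D$ decides $b$) with part~\ref{accept heir} (if $D$ accepted $b$ then $B\leq D$ would too), which is presumably what you intended.
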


Now, we proceed to prove Lemma \ref{openram}. 
Recall that the basic metric open subsets of $\finkinf$ are of the form $[b]=\{A\in\finkinf\colon b\sqsubset A\},$
where $b\sqsubset A$ means  $(\exists n\in\mathbb{N})\ (r_n(A)=b)$.

  \begin{proof} (of lemma \ref{openram})
  Let $\mathcal{X}$ be a metric open subset of $\mathcal{R}$ and fix a nonempty $[a,A]$ with $A\in\mathcal{H}$. Without a loss of generality, we can assume $a = \emptyset$. Since $\mathcal{X}$ is open, there exists $\mathcal{F}\subseteq  \finkfin $ such that $\mathcal X = \bigcup_{b\in \mathcal F} [b]$. Let $B\in \mathcal{H}\upharpoonright  A$ be as in Lemma \ref{galvinlocal}. If (1) holds then $[0,B]\subseteq\mathcal{X}^c$ and if (2) holds then $[0,B]\subseteq\mathcal{X}$.
 \end{proof}

 \section{Ultrafilters and forcing}
 \subsection{Ultrafilters on $\mathbb N$ and on $FIN$}
We recall some definitions related to ultrafilters on $\omega$.

A non-principal ultrafilter $\mathcal U$ on $\mathbb N$ is a {\bf P-point} if for every partition $\mathbb N=\bigcup_{i\in \omega}A_i$ into sets not belonging to $\mathcal U$ there is $B\in \mathcal U$ such that $|B\cap A_i|<\omega$ for every $i\in \omega$. $\mathcal U$ is said to be a {\bf Q-point} if for every partition $\mathbb N=\bigcup_{i\in \omega}A_i$ into finite sets  there is $B\in \mathcal U$ such that $|B\cap A_i|\leq 1$ for every $i\in \omega$. An ultrafilter $\mathcal U$ is {\bf selective} if it is a P-point and a Q-point.

 \begin{defn}
An ultrafilter  $\mathcal U$ on $\mathbb N$is {\bf strongly summable} if for every $A\in \mathcal U$ there is a strictly increasing sequence of positive integers $\{n_k: k\in \omega\}$ such that $FS(\{n_k\})$ is an element of $\mathcal U$ and
 $FS(\{n_k\})\subseteq A$.
 \end{defn}
 
 Here, $FS(\{n_k\})= \{\sum_{k\in F}n_k: F\in [\omega]^{<\infty}\}.$ That is, $FS(\{n_k\})$ is the set of finite sums of elements of $\{n_k\}$ with no repetitions.

 Strongly summable ultrafilters are related to Hindman's theorem as selective ultrafilters are related to Ramsey's theorem.

 For ultrafilters on $FIN$, the following definitions due to Blass (\cite{Bl}) give the corresponding ultrafilters  related to the finite unions version of Hindman's theorem.
 
 \begin{defn}(\cite{Bl})\label{uultra}
 An ultrafilter $\mathcal U$ on $FIN$ is a union ultrafilter if it has a basis of sets of the form $FU(\{a_n: n\in \omega\})$ where 
   $\{a_n: n\in \omega\}$ is a sequence of pairwise disjoint elements of $FIN$.
   
   $\mathcal U$ is an ordered-union ultrafilter if it has a basis of sets of the form $FU(\{a_n: n\in \omega\})$ where 
   $\{a_n: n\in \omega\}$ is a block sequence of pairwise disjoint elements of $FIN$ (i.e. for every $n$ $max(a_n)<min(a_{n+1}$)).
 \end{defn}

Here,  $FU(\{a_n:n\in \omega\})= \{\cup_{k\in F}a_k: F\in [\omega]^{<\infty}\}.$

 \bigskip
 Recall the following fact about forcing with  coideals on $\ninf$.  If $\mathcal H$ is a selective coideal on $\ninf$, forcing with the partial order  $(\mathcal H, \subseteq^*)$ adds a selective ultrafilter  on $\mathbb N$ contained in $\mathcal H$ (\cite{Ma}).  Farah,  in  \cite{Fa}, proved the same  for $\mathcal H$ semiselective.

 \begin{lem}\label{ul}(\cite{Ma}, \cite{Fa})
 Let $M$ be a transitive model of $ZF+DC$, and $\mathcal H$ a semiselective coideal  contained in  $\ninf$ in $M$.
 Let $\mathcal U$ be $(\mathcal H, \subseteq^*)-$generic over $M$. Then,   in $M[\mathcal U]$, $\mathcal U$ is a selective ultrafilter on $\omega$.
\end{lem}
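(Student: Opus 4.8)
The plan is to verify the three defining properties of a selective ultrafilter on $\omega$ — that $\cU$ is a non-principal ultrafilter, a $P$-point, and a $Q$-point — using standard genericity arguments together with the semiselectivity of $\cH$ in $M$. First I would establish that $\cU$ is an ultrafilter: given any $A\in M$ with $A\subseteq\omega$, the set $D_A=\{X\in\cH : X\subseteq^* A \text{ or } X\subseteq^* \omega\setminus A\}$ is dense in $(\cH,\subseteq^*)$ — indeed, for any $Y\in\cH$, since $\cH$ is a coideal at least one of $Y\cap A$, $Y\setminus A$ lies in $\cH$ (this is $\mathbf{A4}\bmod\cH$ applied to the two-coloring of $Y$), so some condition below $Y$ decides $A$ — and a generic filter meets $D_A$, so exactly one of $A$, $\omega\setminus A$ is (mod finite) in $\cU$. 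Non-principality follows because $\{X\in\cH : n\notin X\}$ is dense for each $n$. That $\cU$ is a filter invariant under finite changes is immediate from the order $\subseteq^*$ and the fact that $\cH$ is closed under finite changes.

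Next I would handle the $P$-point and $Q$-point properties together, since both reduce to a diagonalization statement that is exactly what semiselectivity of $\cH$ provides. Given a partition $\omega=\bigsqcup_{i\in\omega}A_i$ in $M$, consider two cases. If some $A_i\in\cU$ there is nothing to prove. Otherwise each $A_i\notin\cU$, which (by genericity and the density argument above) means the set $\cD_n=\{X\in\cH : X\cap A_i=\emptyset \text{ for all } i\le n\}$ — equivalently $X\subseteq^* \omega\setminus\bigcup_{i\le n}A_i$, so $X\cap A_i$ is finite for $i\le n$ — is dense open in $\cH$ below any condition forcing "$A_i\notin\dot\cU$ for all $i$". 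Semiselectivity of $\cH$ (via Lemma~\ref{semiequiv} in the $\ninf$ version, as used by Farah) yields $B\in\cH$ diagonalizing the $\cD_n$, i.e. $|B\cap A_i|<\omega$ for all $i$; a density argument then shows $\cU$ contains such a $B$, giving the $P$-point property. For the $Q$-point property one runs the same argument with the partition into finite intervals: for partitions into finite sets the sets $\cE_n=\{X\in\cH : |X\cap A_i|\le 1 \text{ for all } i\ge n \text{ with } A_i\cap X \text{ constrained}\}$ are dense — here one uses that $\cH$, being a coideal, is nonmeager / has the relevant diagonalization, so one can thin any $Y\in\cH$ to hit each finite $A_i$ at most once — and semiselectivity again produces the generic witness in $\cU$.

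The main obstacle — and the place the argument needs genuine care rather than bookkeeping — is the transfer from "$\cH$ is semiselective" to "the relevant families of conditions are dense open and their diagonalizations lie in $\cH$", i.e. setting up $\cD_n$ and $\cE_n$ so that they are genuinely dense \emph{open} in $\cH$ below the working condition and so that the diagonalization given by semiselectivity (Proposition~\ref{semiequiv}) actually lands inside $\cU$. The subtlety is that semiselectivity as defined produces a diagonalizing $B$ \emph{in $M$}, and one must argue by a density/genericity argument in $M$ that $\cU$ meets the set of conditions below such a $B$; this is where the hypotheses $M\models ZF+DC$ and the fact that $\cH$ is semiselective \emph{in} $M$ are used, since the diagonalization lemmas require dependent choice. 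Once this template is in place, the $P$-point and $Q$-point cases are formally identical modulo the choice of partition, and the non-principal-ultrafilter part is routine. I would also remark that this lemma is essentially Mathias's theorem for selective coideals together with Farah's extension to semiselective coideals, so the proof is a matter of recalling their argument rather than inventing a new one.
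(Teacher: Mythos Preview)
The paper does not give its own proof of this lemma: it is quoted as a known result, attributed to Mathias \cite{Ma} for selective $\cH$ and to Farah \cite{Fa} for the semiselective case, and no argument is supplied. So there is nothing in the paper to compare your sketch against directly.

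That said, your outline is the right shape and is essentially the Mathias--Farah argument: prove $\cU$ is a non-principal ultrafilter by standard density arguments, then obtain the $P$-point and $Q$-point properties by diagonalization using semiselectivity, and finally push the diagonalizing witness into $\cU$ by genericity. You also correctly flag the delicate step, namely that the diagonalizing $B\in\cH$ lives in $M$ and one needs a further density argument to ensure $B\in\cU$.

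Two points should be tightened before this counts as a proof. First, for the ultrafilter step you only treat $A\in M$; to conclude $\cU$ is an ultrafilter in $M[\cU]$ you must observe that $(\cH,\subseteq^*)$ is $\sigma$-distributive (this is exactly Lemma~\ref{distributive} transported to $\ninf$, and is what Farah uses), so that $M[\cU]$ has no new subsets of $\omega$ and the density arguments for $A\in M$ suffice. Second, your $Q$-point sketch is the real gap: the assertion that ``one can thin any $Y\in\cH$ to hit each finite $A_i$ at most once'' and remain in $\cH$ does not follow from the bare coideal axioms, and your sets $\cE_n$ are not well-defined as written. The standard route is to invoke the \emph{local Galvin lemma} for semiselective $\cH$ (the $\ninf$ analogue of Lemma~\ref{galvinlocal}): color a pair $\{m,n\}$ by whether $m,n$ lie in the same piece $A_i$, find a homogeneous $X\in\cH$ below $Y$, and note that since each $A_i$ is finite the homogeneous color must be ``different pieces'', so $|X\cap A_i|\le 1$ for every $i$. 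This shows the relevant set of conditions is dense, and genericity finishes. With these two fixes your sketch is complete and matches the cited proofs.
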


If $A,B\in \fininf$, $A\leq^* B$ means that $A$ is almost a condensation of $B$, that is, except for a finite number of its elements, every element of $A$ is a finite union of elements of $B$.

If $\mathcal H$ is a semiselective coideal on $\fininf$, forcing with $(\mathcal H, \leq^*)$  adds an ultrafilter contained in $\mathcal H$ with interesting properties.

\begin{defn}(\cite{Bl})\label{souultra}
 An ordered-union ultrafilter $\mathcal U$ on $FIN$  is stable if for every sequence $\{D_n: n\in \omega\}\subseteq \fininf$
 such that $FU(D_n)\in \mathcal U$ for every $n$, there is $E\in \fininf$ such that $FU(E)\in \mathcal U$ and for every $n$ $E\leq^*D_n$.
  \end{defn}

As we will see, if $\mathcal H$ is a semiselective coideal, then the partial order $(\mathcal H, \leq^*)$ adds a stable ordered-union ultrafilter.
This result is due to Eisworth, who  proved it in \cite{Ei02}  using the concept of  Matet-adequate families $\mathcal H\subseteq \fininf$. 

\begin{defn}\label{matetad}\cite{Ei02}
A family $\mathcal H\subseteq \fininf$ is Matet-adequate if
\begin{enumerate}
\item $\mathcal H$ is closed under finite changes,
\item  For all $A,B\in\fininf$, if $A\in\mathcal H$ and $A\leq B$ then $B\in\mathcal H$.
\item $(\mathcal H, \leq^*)$ is $\sigma$-closed,
\item If $A\in \mathcal H$ and $FU(A)$ is partitioned into 2 pieces then there is $B\leq A$ in $\mathcal H$ so that $FU(B)$ is included in a single piece of the partition (this is called the Hindman property).
\end{enumerate}
\end{defn}

  Semiselective coideals and Matet-adequate subfamilies of $\fininf$ are clearly related.  Both classes of families share the two properties of being closed under finite changes and being closed upwards; they share also the Hindman property, which is equivalent to  $A4$ (mod $\mathcal H$).
A Matet adequate family is $\sigma$-closed as a partial order under $\leq^*$, and thus $\sigma$-distributive; while a semiselective coideal $\mathcal H$,  has the diagonalization property  of definition \ref{semiselective} which also implies that $(\mathcal H, \leq^*)$ is $\sigma$-distributive.
We will prove that in fact Matet-adequate families are just selective coideals in $\fininf$.
\subsection{Ultrafilters on $\finkinf$}
We will work now in the context of the space $\finkinf$, with generalizations of the mentioned concepts to this context.  So, we fix $k\geq 1$ from now on.

The poset $(\finkk,\leq)$ (and therefore $(\finkk,\leq^\ast)$) has a maximal element $\one$ consisting of  the set of functions $k\cdot\chi_{\{n\}}$ varying $n\in\omega$. 

\begin{defn}

    \begin{enumerate}
        \item An ultrafilter $\mathcal U$ on $\fink$ is  \textbf{ordered-T} if it has a basis of sets of the form  $[A]$ with $A\in\finkinf$.
        \item An ordered-T ultrafilter is  \textbf{stable} if for every sequence $\{[A_n]:n\in\omega\}\subseteq\mathcal U$, with each $A_n\in \finkinf$,  there exists $E\in\finkinf$ such that $E\leq^\ast A_n$ for each $n\in\omega$ and $[E]\in \mathcal U$.
  
\item An ultrafilter  $\mathcal U$ on $\FIN_k$ has the \textbf{Ramsey property for pairs} if for every partition $$c:\fink^{[2]}\to 2$$ 
 there exists $X\in\mathcal U$ such that $[X]^2_<$ is monochromatic.
 \end{enumerate}
 \end{defn}
  These notions coincide with those  of ordered-union, stable, and Ramsey property for pairs as in \cite{Bl} when $k=1$.

 \begin{thm}(\cite{Ei02})
 Let $\mathcal H$ be a  $\sigma$-distributive  coideal on $\finkinf$.
 If $G$ is $(\mathcal H, \leq^*)$-generic over $V$, then 
 $$\mathcal U_G=\{A\subseteq \FIN_k: \exists B\in G \,([B]\subseteq A)\}$$ is a stable ordered-T ultrafilter.

 \end{thm}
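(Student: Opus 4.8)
The plan is to verify that $\mathcal U_G$ is an ultrafilter on $\FIN_k$ and then that it is ordered-T and stable, using the forcing-theoretic machinery implicit in $\sigma$-distributivity. First I would check that $\mathcal U_G$ is a filter: if $A, B \in \mathcal U_G$, witnessed by $B_0, B_1 \in G$ with $[B_0] \subseteq A$ and $[B_1] \subseteq B$, then since $G$ is a filter on $(\mathcal H, \leq^*)$ there is $C \in G$ with $C \leq^* B_0$ and $C \leq^* B_1$; after removing finitely many elements, $[C] \subseteq [B_0] \cap [B_1] \subseteq A \cap B$, and since a finite modification of $C$ is still in $\mathcal H$ and $\leq^*$-below $C$, genericity (or rather closure of $G$ under the equivalence) gives such a representative in $G$. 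Upward closure under $\subseteq$ is immediate from the definition.

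Next I would establish that $\mathcal U_G$ is an \emph{ultra}filter, i.e. for every $A \subseteq \FIN_k$ either $A \in \mathcal U_G$ or $\FIN_k \setminus A \in \mathcal U_G$. This is the heart of the argument and where the Pigeonhole Principle A.4 (Gowers' theorem) enters. Fix $A \subseteq \FIN_k$ and an arbitrary condition $B \in \mathcal H$. I want to find $C \leq B$ in $\mathcal H$ with $[C] \subseteq A$ or $[C] \subseteq \FIN_k \setminus A$; this shows the set of such $C$ is dense, so $G$ meets it. To get this I would iterate A.4 mod $\mathcal H$ level by level: at each stage, given the block sequence built so far up to $r_n$, the set $\mathcal O \subseteq \FIN_k^{|a|+1}$ consisting of one-block extensions whose new element lands in $A$ is partitioned by A.4 mod $\mathcal H$, producing a further condition homogeneous for that question; more care is needed because $[C]$ consists of arbitrary sums $T^{(i_0)}(p_{n_0}) + \dots + T^{(i_l)}(p_{n_l})$, not just single blocks, so one really wants homogeneity for the colouring $c : \finkfin \upharpoonright B \to 2$ given by $c(s) = 1$ iff the sum of the blocks of $s$ lies in $A$, and this is exactly the kind of colouring handled by the topological Ramsey space structure (apply Theorem \ref{fininfram}, or directly Lemma \ref{openram} with $\mathcal H = \finkinf$, to the metric-open set determined by $A$) — alternatively one runs the fusion argument using $\sigma$-distributivity to diagonalize over all levels at once, which is why that hypothesis is assumed. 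Having obtained a homogeneous $C$, whichever colour it realizes tells us $[C] \subseteq A$ or $[C] \subseteq A^c$, giving density.

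For the ordered-T property, observe that by construction every element of $G$ is some $B \in \finkinf$, and $[B] \in \mathcal U_G$ trivially (take $B$ itself as witness); conversely the previous paragraph shows the sets $[B]$ with $B \in G$ are cofinal in $\mathcal U_G$ under $\subseteq$, so $\{[B] : B \in G\}$ is a basis consisting of sets of the required form. Finally, for stability, suppose $[A_n] \in \mathcal U_G$ for all $n \in \omega$; by genericity and density we may assume each $A_n$ is (a finite modification of) an element of $G$, hence for each $n$ there is $B_n \in G$ with $B_n \leq^* A_n$. The family $\{D_n : n \in \omega\}$ where $D_n = \{C \in \mathcal H : C \leq^* B_m \text{ for all } m \leq n\}$ is a decreasing sequence of $\leq^*$-dense open subsets of $\mathcal H$ (density uses that $G$, and more relevantly $\mathcal H$, is directed under $\leq^*$ for finitely many conditions); by $\sigma$-distributivity — exactly Lemma \ref{distributive}, noting a $\sigma$-distributive coideal is what is hypothesized — there is $E \in \mathcal H$ with $E \in \bigcap_n D_n$, and by genericity we may take $E \in G$, so $[E] \in \mathcal U_G$ and $E \leq^* B_n \leq^* A_n$ for every $n$, as required.

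The main obstacle I anticipate is the ultrafilter step: one must be careful that homogenizing a $2$-colouring of \emph{finite block sequences} (so that the generated subsemigroup $[C]$ lands entirely inside $A$ or its complement) genuinely follows from A.4 mod $\mathcal H$ together with a fusion/diagonalization across all levels, rather than just A.4 at a single level — and it is precisely here that $\sigma$-distributivity (equivalently the semiselectivity-type diagonalization of Lemma \ref{filtered}) is doing the work, by letting us meet densely many level-by-level homogeneity requirements inside a single generic condition. Everything else is bookkeeping with filters and finite modifications.
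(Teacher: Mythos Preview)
Your argument has two genuine gaps, both tied to the same underlying misconception about what density in a \emph{coideal} means.

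\textbf{Ultrafilter step.} You correctly identify that for each $A\subseteq\FIN_k$ the set $\{C\in\mathcal H:[C]\subseteq A\text{ or }[C]\subseteq A^c\}$ must be shown dense, but you overcomplicate this: by the definition of coideal, $\mathbf{A4}\bmod\mathcal H$ applied with $a=\emptyset$ \emph{is} Gowers' theorem relativized to $\mathcal H$, and since $r_1[\emptyset,B]=[B]$ a single application gives $[B]\subseteq A$ or $[B]\subseteq A^c$ directly---no fusion or level-by-level iteration is needed. More importantly, you never address why the dense set is in $V$. If $A\in V[G]\setminus V$, genericity gives nothing. The paper uses $\sigma$-distributivity here a second time: a $\sigma$-distributive forcing adds no new reals, hence no new subsets of the countable set $\FIN_k$, so every partition is already in the ground model.

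\textbf{Stability step.} Your sets $D_n=\{C\in\mathcal H:C\leq^* B_m\text{ for all }m\leq n\}$ are \emph{not} dense in $(\mathcal H,\leq^*)$. A coideal is upward closed, not downward directed: an arbitrary $C\in\mathcal H$ may be incompatible with $B_0$, and then nothing below $C$ lies in $D_0$. Your parenthetical ``density uses that $G$, and more relevantly $\mathcal H$, is directed'' is simply false for $\mathcal H$. The paper fixes this with a standard trick: set $D_n=\{B\in\mathcal H:B\perp B_n\text{ or }B\leq^* B_n\}$, which \emph{is} dense open (either $C$ is already incompatible with $B_n$, or a common extension witnesses $\leq^*$). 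By $\sigma$-distributivity $\bigcap_n D_n$ is dense and lies in $V$ (again using that no new $\omega$-sequences are added, so the sequence $(B_n)_n$ is in $V$). Any $E\in G\cap\bigcap_n D_n$ is compatible with each $B_n$ (both lie in the filter $G$), so the ``$\perp$'' alternative is ruled out and $E\leq^* B_n\leq A_n$ for all $n$.
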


\begin{proof}
Clearly, $\mathcal U_G$ is an ordered-T filter.
The stability follows from the $\sigma$-distributivity as follows. Let $\{A_n:n\in \omega\}$ be  a sequence of elements of $\mathcal U_G$.
Take a sequence $\{B_n: n\in \omega\}$ of elements of $G$ such that for every $n\in \omega$ $[B_n]\subseteq A_n$.
For every $n$, let $$D_n=\{B\in \fininf : B\perp B_n \mbox{ or } B\leq^*B_n\}.$$
Each $D_n$ is dense in $(\mathcal H, \leq^*)$,  and so, by $\sigma$-distributivity of $\mathcal H$ the intersection 
$\bigcap_n D_n$  is dense and is in the ground model. Thus there exists $E\in G\cap (\bigcap_n D_n)$. Then $[E]\in U_G$ and $E$ is an almost condensation  of $B_n$ for every $n$.
For every partition of $\FIN_k$ in the ground model,  Gowers property $A4$ of $\mathcal H$ gives $A\in G$ such that $[A]$ is included in one part of the partition. But since $\mathcal H$ is $\sigma$-distributive, there are no new partitions of $\FIN_k$ in the extension and thus $\mathcal U_G$ is an ultrafilter.
\end{proof}

\begin{defn}
If $\mathcal U$ is an ordered-T ultrafilter on $\FIN_k$, then 
$$\mathcal U^{\infty} =\{A\in FIN^\infty : [A]\in \mathcal U \}.$$
\end{defn}

The next definition was first proposed by Krautzberger in \cite{Kra} and will be used in the proof  of Theorem \ref{stableRam}.

\begin{defn}
Let $A\in\finkinf$ and $f\in [A]$. We say that $n\in\omega$ is an \textbf{$A$-splitting point of $f$} if both $f\upharpoonright(n+1)$ and $f\upharpoonright\omega\setminus(n+1)$ are elements of $[A]$ and there exists $g\in[A]$ such that
$$f\upharpoonright (n+1)<g<f\upharpoonright \omega\setminus(n+1).$$
We define $\pi_A:[A]\to\omega$ as $\pi_A(f):=|\{n\in\omega:n\text{ is an }A\text{-splitting point of }f\}|$.
\end{defn}

\begin{thm}\label{stableRam}(\cite{Bl})
For any ordered-T ultrafilter ultrafilter $\mathcal U$ on $\FIN_k$, the following  are equivalent. 
\begin{enumerate}
\item $\mathcal U$ is stable.
\item $\mathcal U^\infty$ is selective
\item $\mathcal U$ has the Ramsey property for pairs: Whenever $[\FIN_k]^2_<$ is partitioned into two pieces, there is $X\in \mathcal U$ with $[X]^2_<$ included in one piece of the partition.
(Here $[X]^2_<$ is the set of pairs $(s,t)$of elements of $X$ with $s<t$).
\item $\mathcal U$ has the Ramsey property: For every $n\in \omega$, if $[\FIN_k]^n_<$ is partitioned into two pieces, there is an $X\in \mathcal U$ with $[X]^n_<$ included in one of the pieces.
\end{enumerate}
\end{thm}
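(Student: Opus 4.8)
The plan is to prove the four statements equivalent by establishing the cycle
$(1)\Rightarrow(2)\Rightarrow(3)\Rightarrow(1)$ together with $(3)\Leftrightarrow(4)$, leaning on the topological-Ramsey-space machinery already set up for $\finkinf$. For $(1)\Rightarrow(2)$, suppose $\Uc$ is a stable ordered-T ultrafilter and show $\Uc^\infty=\{A\in\finkinf:[A]\in\Uc\}$ is a selective coideal in the sense of Definition \ref{selective}. That $\Uc^\infty$ is a coideal follows from the ultrafilter axioms (a)--(d) of Definition \ref{ultra}: closure under finite changes and under $\leq$ is immediate from the definition of $\Uc$, property $A3\mod\Uc^\infty$ follows from (c), and $A4\mod\Uc^\infty$ is precisely the Gowers/Hindman pigeonhole (d). For selectivity, given a decreasing sequence $(A_n)\subseteq\Uc^\infty\!\upharpoonright\!A$ with $[a,A_n]\ne\emptyset$, one uses stability of $\Uc$ to find $E\in\finkinf$ with $[E]\in\Uc$ and $E\leq^\ast A_n$ for all $n$; then a finite modification of $E$ inside $[a,A]$ (legitimate since $\Uc^\infty$ is closed under finite changes and $[E]\in\Uc$) produces the required diagonalization $B\in\Uc^\infty\cap[a,A]$, because $\depth_A(b)=n$ forces $b\in[A_n]$ for all but finitely many blocks, and the finitely many exceptions are absorbed.

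For $(2)\Rightarrow(3)$ and $(3)\Leftrightarrow(4)$, I would invoke the localized Ramsey theory of $\finkinf$. If $\Uc^\infty$ is selective it is semiselective by Theorem \ref{selective is semi}, so Lemma \ref{openram} (and the full extension of Theorem \ref{fininfram} to $\Hc$-Ramsey sets promised after Definition \ref{semiselective}) applies with $\Hc=\Uc^\infty$. A partition $c:\fink^{[2]}\to 2$ determines, via the approximation maps $r_n$, a clopen hence $\Uc^\infty$-Ramsey subset of $\finkinf$; localized Ramseyness yields $B\in\Uc^\infty$ on which the induced coloring of $r_2[\,\cdot\,]$ is constant, and unfolding this says exactly that $[B]^2_<$ is monochromatic with $[B]\in\Uc$, giving $X=[B]$. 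The same argument with $\fink^{[n]}$ in place of $\fink^{[2]}$ gives $(4)$, so in fact $(2)$ implies $(4)$; and $(4)\Rightarrow(3)$ is trivial (take $n=2$). One must be mildly careful translating "$c$ constant on $r_n[B]$" into "$[X]^n_<$ monochromatic'': an $n$-tuple $(s_1,\dots,s_n)$ of elements of $X$ with $s_1<\dots<s_n$ is not literally an initial segment of a condensation of $X$, so one first thins $X$ using $A4\mod\Uc^\infty$ finitely many times, coloring, for each of the finitely many "shapes,'' the relevant approximations, exactly as in the classical Milliken/Blass argument.

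For $(3)\Rightarrow(1)$, assume $\Uc$ has the Ramsey property for pairs and deduce stability. Given $\{[A_n]:n\in\omega\}\subseteq\Uc$, the goal is $E$ with $[E]\in\Uc$ and $E\leq^\ast A_n$ for every $n$. The standard Blass trick is to define, for $f\in\FIN_k$, a coloring of $[\FIN_k]^2_<$ using the function $n\mapsto A_n$ together with the splitting-point apparatus $\pi_A$: color a pair $(s,t)$ by whether $t$ is, roughly, "captured'' by $[A_m]$ where $m$ is read off from the number of splitting points of $s$. Applying $(3)$ gives $X\in\Uc$ on which this coloring is constant; a diagonal/ultrafilter argument (using that $[A_n]\in\Uc$ for each fixed $n$ to rule out the "bad'' color on a tail) then shows $[X]\in\Uc$ and that $X$, past finitely many blocks, condenses into each $A_n$. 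I expect this last implication to be the main obstacle: it is the only place where one genuinely needs to reconstruct Blass's combinatorial encoding of a sequence of sets into a single pair-coloring (this is where the Krautzberger function $\pi_A$ and the notion of $A$-splitting point earn their keep), and getting the coloring to simultaneously handle all $n$ and survive the transition from "monochromatic on $[X]^2_<$'' to "$E\leq^\ast A_n$ for all $n$'' requires care with which finitely many initial blocks get discarded. The other implications are essentially bookkeeping on top of Theorems \ref{fininfram} and \ref{selective is semi} and Lemma \ref{openram}.
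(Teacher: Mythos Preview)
Your proposal has a genuine gap in the implication $(1)\Rightarrow(2)$, and it is precisely the step you treat as routine bookkeeping. From stability you obtain $E\in\Uc^\infty$ with $E\leq^\ast A_n$ for every $n$, and you then assert that a single finite modification of $E$ yields a diagonalization of $(A_n)$ within $A$. This does not follow: $E\leq^\ast A_n$ gives, for each fixed $n$, a threshold $j(n)$ such that $E/j(n)\leq A_n$, but the diagonalization condition of Definition~\ref{diagonal3} requires $E/b\leq A_{\depth_A(b)}$ for \emph{every} $b\in\finkfin\upharpoonright E$. As $\depth_A(b)$ grows with $b$, so does the threshold $j(\depth_A(b))$, and no single finite modification absorbs all of these at once. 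One must instead thin $E$ inside $\Uc^\infty$ so that consecutive blocks are spaced far enough apart that $\max(b)$ dominates $j(\depth_A(b))$. This is exactly where the paper invokes Krautzberger's splitting-point function $\pi_A$: after passing to $C\in\Uc^\infty$ with $j(\min f)<\max f$ for all $f\in[C]$, one further thins to $B\in\Uc^\infty$ on which $\pi_C$ is odd, and the parity argument forces a genuine gap between any $f\in[B]$ and the next block of $B$, which is what makes the diagonalization go through.

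In other words, you have located the role of $\pi_A$ in the wrong implication. The step $(3)\Rightarrow(1)$ is in fact the easy one and needs no splitting points: the paper simply colors a pair $(f,g)$ by whether $g\in\bigcap_{n\leq\max f}[A_n]$, applies the Ramsey property for pairs, and observes that the homogeneous set must take color $1$ because for any fixed $f$ that intersection lies in $\Uc$. Your approach to $(2)\Rightarrow(3)$ via the $\Uc^\infty$-Ramseyness of the clopen set $\{B:c(r_2(B))=0\}$ is correct and arguably cleaner than the paper's direct argument with the induced colorings $c_f$; the translation you worry about is immediate, since any $(s,t)\in[[B]]^2_<$ is $r_2(C)$ for some $C\leq B$.
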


\begin{proof}
We prove the equivalence between the first three properties following ideas of  Blass, Eisworth and Krautzberger (for the case $k=1$) in \cite{Bl}, \cite{Ei02} and \cite{Kra} respectively.

    \begin{enumerate}
        \item[$1\to2$]
        
        Suppose $\mathcal U$ is a stable ordered-T ultrafilter on $\FIN_k$. To see that $\mathcal U^\infty $ is invariant under finite changes it is enough to notice that for $A\in \mathcal U^\infty $, and $s$ is an element of $A$, the set $\{t\in [A]: \text{supp}(s)\subseteq \text{supp}(t)\}$ is not in $\mathcal U$. This is so because this set does not contain any set of the form $[X]$ with $X\in \fininf$. Once we have this, is is clear that the same applies to $\{t\in [A]: \exists s\in F (\text{supp}(s)\subseteq \text{supp}(t)\}$ for any finite subsequence $F$ of $A$.
Finally, if $B$ differs from $A$ in a finite set, then $[B]$ contains $[C]$ for some $C$ obtained deleting finitely many elements from $A$.

 It is easy to verify that $\mathcal U^\infty $ is closed upwards, since if $A\leq B$, $[A]\subseteq [B]$.
 
 If $A,B\in \mathcal U^\infty$, then $[A]$ and $[B]$ are in $\mathcal U$ and so their intersection is also in $\mathcal U$. Then, there is $C\in \mathcal U^\infty$ with $[C]$ contained in this intersection, and therefore $C\leq A, B$. We have then that $\mathcal U^\infty$ is a filter. 
 That $\mathcal U^\infty$ is a maximal filter follows from the maximality of $\mathcal U$ as an ultrafilter on $\FIN_k$.
A3 (mod $\mathcal U^\infty$) is obvious. And A4 (mod $\mathcal U^\infty$) is just a consequence of the fact that stable ordered-T ultrafilters contain homogeneous sets for partitions as in Gowers' theorem.

        We show now that $\mathcal U^\infty$ is stable.
         Let $A\in\mathcal U^\infty$ and $\{A_n:n\in\omega\}\subseteq\mathcal U^\infty\upharpoonright A$ as in definition \ref{selective}. As $\mathcal U$ is stable let $E\in\mathcal U^\infty\upharpoonright A$ such that $E\leq^\ast A_n$ and let $j:\omega\to\omega$ such that $E/j(n)\leq A_n$ and $j(n)$ is minimal with such a property (this guarantees that the function $j$ is increasing).
        
        First of all let us see that $\{f\in[E]:j(\min(f))<\max(f)\}\in\mathcal U$. Note that given $B\in\mathcal U^\infty\upharpoonright E$, there exists $f<g$ both in $[B]$ such that $j(\min(f))<\max(g)$ and therefore
        $$j(\min(f+g))=j(\min(f))<\max(g)=\max(f+g)$$
        that is, the set above intersect any element of $\mathcal U$. Let $C\in\mathcal U^\infty\upharpoonright E$ such that $[C]$ is contained in that set.
        
        Let us see now that $\{f\in[C]:\pi_C(f)\text{ is odd}\}\in\mathcal U$. For this note that any condensation of $C$ contains $f<g<h$ and $\pi_C(f+h)=\pi_C(f)+\pi_C(h)+1$ so at least one between $f,h$ and $f+h$ has an odd number of $C$-splitting points. Take now $B\in\mathcal U^\infty\upharpoonright C$ such that $[B]$ is contained in the set above and lets see that $B$ is a diagonalization of the $A_n$'s within $A$. Given $f\in[B]$ and $h\in B/f$ we have that there exists $g\in[C]$ with $f<g<h$, otherwise $\pi_C(f+h)=\pi_C(f)+\pi_C(h)$, that is an even number. We have then that
        $$\min(h)>\max(g)>j(\min(g))\geq j(\max(f))$$
        and therefore $h\in A_{\max(f)}\leq A_{\depth_A(f)}$ since $\depth_A(f)\leq\max(f)$.
        
        \item[$2\to 3$] Let $c:[\fink]_<^2\to2$ a partition and for each $f\in\fink$ consider the induced partition $c_f:\fink/f\to 2$ given by $c_f(g)=c(f, g)$. As $\mathcal U$ has the Gowers property (for being ordered-T) let $A_f\in\mathcal U^\infty$ such that $[A_f]$ is monochromatic for $c_f$. By a counting argument there exists an infinite $F\subseteq\fink$ and $i\in2$ such that for all $f\in F$, $c_f``[A_f]=\textbf{i}$. Let $G_n:=\{f\in F:\max(f)=n\}$. Take $A_0=\one$ and, since $G_n$ is finite, let $A_{n+1}\in\mathcal U^\infty\upharpoonright A_n$ such that $A_{n+1}\leq A_f$ for each $f\in G_{n+1}$. By selectivity let $B\in\mathcal U^\infty$ be a diagonalization of the $A_n$'s within $\one$. We have then that, for $f\in[B]$,
        $$B/f\leq A_{\depth_\one(f)}=A_{\max(f)}\leq A_f$$
        and therefore, if $f< g$ are both in $[B]$, $c(f, g)=c_f(g)=i$.
        
        \item[$3\to 1$] Let $\{A_n:n\in\omega\}\subseteq\mathcal U^\infty$ and consider the partition of $[\fink]_<^2$ in two pieces given by
        $$c(f, g)=\left\{
        \begin{array}{cc}
            1 & (\forall n\leq\max(f))(g\in A_n)\\
            0 & \text{otherwise}
        \end{array}
        \right.$$
        By the Ramsey property for pairs let $B\in\mathcal U^\infty$ such that $[[B]]_\less^2$ is monochromatic and note that such a set must meet the color $1$. To see this let $f\in[B]$ and choose some $f < g$ with
        $$g\in[B]\cap\bigcap\left\{[A_n]:n\leq\max(f)\right\}\in\mathcal U.$$
        Now fix $n\in\omega$ and take $f\in B$ with $n\leq\max(f)$. We have then that $B/f\leq A_n$ thus $B\leq^\ast A_n$.
    \end{enumerate}
\end{proof}

We go back now to the case $k=1$ to show that Matet-adequate families are the same as selective coideals.


\begin{thm}
Every Matet-adequate family is a selective coideal in $\fininf$. Therefore,  Matet-adequate families and selective coideals coincide.
\end{thm}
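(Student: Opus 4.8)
The plan is to prove both inclusions, the direction ``selective coideal $\Rightarrow$ Matet-adequate'' being the short one, so I would dispose of it first. For that direction: conditions (1) and (2) of Definition \ref{matetad} are literally clauses (a) and (b) of a coideal, condition (4) (the Hindman property) is ${\bf A4}\mod\mathcal{H}$, and $\sigma$-closure of $(\mathcal H,\leq^*)$ falls out of selectivity. Indeed, given a $\leq^*$-decreasing sequence in $\mathcal H$, one first replaces it — by deleting finitely many blocks from each term, which keeps us inside $\mathcal H$ — by a $\leq$-decreasing sequence $(A_n)$ with $A_{n+1}\in[n,A_n]$; a $B\in\mathcal H$ diagonalizing $(A_n)$ within $A_0$ then, by applying Definition \ref{diagonal3} to the initial segments $r_\ell(B)$ (whose depths in $A_0$ are unbounded), satisfies $B\leq A_n$ for every $n$, and so is a $\leq^*$-lower bound of the original sequence.

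For the main inclusion, fix a Matet-adequate $\mathcal H$. First I would verify that $\mathcal H$ is a coideal. Clauses (a), (b) are conditions (1), (2). For ${\bf A4}\mod\mathcal H$: given $A\in\mathcal H$, $a\in\finfin\upharpoonright A$ with $\depth_A(a)=n$, and $\mathcal O\subseteq\FIN^{|a|+1}$, the tail $A/n$ lies in $\mathcal H$ as a finite change of $A$, so the Hindman property applied to the $2$-colouring of $FU(A/n)$ determined by whether $a^\frown s\in\mathcal O$ yields $B'\leq A/n$ in $\mathcal H$ with $FU(B')$ monochromatic, and $B:=r_n(A)^\frown B'$ lies in $[n,A]$ and has $B'$ as a condensation, hence lies in $\mathcal H$ by upward closure, with $r_{|a|+1}[a,B]$ contained in $\mathcal O$ or in $\mathcal O^c$. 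For ${\bf A3}\mod\mathcal H$: since $(\fininf,\leq,r)$ is already a topological Ramsey space (Theorem \ref{fininfram}), the ordinary amalgamation A.3 holds, and one only has to check that the witnessing block sequences can be taken inside $\mathcal H$ — which they can, since A.3.1 asserts non-emptiness with no reference to $\mathcal H$, and the $A'$ produced by A.3.2 out of a $B\in\mathcal H$ may be chosen of the form $r_n(A)^\frown(B/a)$, which has the finite change $B/a$ of $B$ as a condensation and hence lies in $\mathcal H$.

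Finally, selectivity. Fix $[a,A]\neq\emptyset$ with $A\in\mathcal H$ and $(A_n)\subseteq\mathcal H\upharpoonright A$ decreasing with $[a,A_n]\neq\emptyset$; after a standard reduction (using ${\bf A3}\mod\mathcal H$ and finite changes) one may assume $A_{n+1}\in[n,A_n]$, so that $B^*:=\lim_n A_n$ is a well-defined block sequence with $B^*\leq A_n$ for all $n$ and $a\leq_{fin}B^*$ (since $a\leq_{fin}A_n$ for every $n$ forces the elements of $a$ into $\bigcap_n[A_n]=[B^*]$). Unwinding Definition \ref{diagonal3}, $B\in[a,A]$ diagonalizes $(A_n)$ within $A$ exactly when $B\leq A_n$ for every $n$, i.e. when $B\leq B^*$; and the moment $\mathcal H$ is shown to contain one condensation of $B^*$, upward closure gives $B^*\in\mathcal H$, and then the finite change $a^\frown(B^*/a)$ is a diagonalization lying in $\mathcal H\cap[a,A]$. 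So the whole matter reduces to producing a condensation of $B^*$ inside $\mathcal H$, and it is here that one must use the full strength of $\sigma$-closure of $(\mathcal H,\leq^*)$ — $\sigma$-distributivity alone (equivalently, by Lemma \ref{distributive}, semiselectivity, which we already have) does not suffice. The plan is a fusion, in the spirit of the arguments verifying selectivity of concrete coideals in \cite{To}: build a $\leq^*$-decreasing sequence $(C_m)\subseteq\mathcal H$ with $r_m(C_m)=r_m(B^*)$ and $C_m\leq A_m$, locking the $m$-th block of $B^*$ at stage $m$ by an application of ${\bf A4}\mod\mathcal H$ that keeps the construction simultaneously inside $\mathcal H$ and inside $A_m$; then take a $\leq^*$-lower bound $C_\infty\in\mathcal H$, which agrees with $B^*$ on arbitrarily long initial segments and therefore differs from a condensation of $B^*$ by only finitely many blocks, so that a tail of $C_\infty$ — a finite change of $C_\infty$, hence in $\mathcal H$ — is $\leq B^*$.

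The step I expect to be the main obstacle is precisely this last fusion: one must arrange $(C_m)$ so that its $\leq^*$-lower bound ends up being (a finite change of) a condensation of the genuine limit $B^*$, and not merely an almost-condensation of every $A_m$ — the naive choice $C_m=A_m$ stalls exactly here, because its $\leq^*$-lower bounds need not be almost-condensations of $B^*$. Making ${\bf A4}\mod\mathcal H$ (Gowers' theorem localized to $\mathcal H$) cooperate with $\sigma$-closure so that the successive blocks of $B^*$ get locked in without leaving $\mathcal H$ is the delicate point on which the whole proof turns.
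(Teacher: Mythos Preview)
Your treatment of the easy direction and of the coideal axioms is fine (with one harmless slip: from a diagonalization $B$ of a fusion sequence you only get $B\leq^* A_n$, not $B\leq A_n$; this still yields $\sigma$-closure). The reduction of selectivity to ``find some condensation of the fusion $B^*$ inside $\mathcal H$'' is also valid.

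The gap is in the execution of that last step. Your fusion argument does not work, and the difficulty you flag as ``the delicate point'' is in fact fatal to the plan as written. The claim that a $\leq^*$-lower bound $C_\infty$ of $(C_m)$ ``agrees with $B^*$ on arbitrarily long initial segments'' is simply false: $\sigma$-closure of $(\mathcal H,\leq^*)$ gives you \emph{some} $C_\infty\in\mathcal H$ with $C_\infty\leq^* C_m$ for every $m$, and $\leq^*$ says nothing whatsoever about initial segments. Concretely, even if $r_m(C_m)=r_m(B^*)$ for every $m$, there is no reason $C_\infty$ shares any initial segment with $B^*$, and no amount of ${\bf A4}\bmod\mathcal H$ at the finite stages will force this, because the passage to a $\leq^*$-lower bound may discard all the carefully locked initial blocks. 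More to the point, $C_\infty\leq^* A_m$ for every $m$ does \emph{not} imply $C_\infty\leq^* B^*$: if $A_m$ merges the singletons $\{1\},\{2\},\dots,\{2m\}$ into pairs and leaves the rest alone, then $B^*$ consists entirely of pairs, the sequence of singletons is $\leq^* A_m$ for every $m$, but no tail of the singletons is a condensation of $B^*$. So the fusion, as you describe it, stalls for exactly the reason you identify in your last paragraph --- and you do not actually overcome it.

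The paper takes a completely different route that sidesteps this obstacle. Having obtained $C\in\mathcal H$ with $C\leq^* A_n$ for every $n$ from $\sigma$-closure, it forces with $(\mathcal H\upharpoonright C,\leq^*)$. This adds a stable ordered-union ultrafilter $\mathcal U_G$ on $\FIN$ with $FU(A_n)\in\mathcal U_G$ for every $n$, and then Theorem~\ref{stableRam} (stable $\Leftrightarrow$ $\mathcal U^\infty$ selective) produces a diagonalization $B$ with $FU(B)\in\mathcal U_G$. Since the forcing is $\sigma$-closed it adds no reals, so $B$ lies in the ground model, and $FU(B)\in\mathcal U_G$ together with upward closure of $\mathcal H$ gives $B\in\mathcal H$. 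What this trick is really doing is importing the splitting-point combinatorics from the proof of $1\Rightarrow 2$ in Theorem~\ref{stableRam}: one first passes (inside $\mathcal H$, via the Hindman property) to $C'\leq C$ on which $j(\min f)<\max f$, and then to $B\leq C'$ on which the number of $C'$-splitting points is odd; these two Hindman applications are forced onto the correct colour because the complementary colour contains no full $FU$-set, and the resulting $B$ diagonalizes. That is the mechanism that replaces your proposed fusion; your sketch never engages with it.
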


\begin{proof} (inspired on \cite{Ei02})
Let  $\mathcal H$ be a Matet-adequate family, and let $[a,A]\neq \emptyset$ with $A\in \mathcal H$, and $\{A_n: n\in \omega\}$ a $\leq$-decreasing sequence in $\mathcal H$, with each $A_n$ a condensation of $A$ such that $[a,A_n]\neq \emptyset$. Let $C\in \mathcal H$  be such that $C\leq^* A_n$ for every $n\in \omega$.
Force with $(\mathcal{H}\upharpoonright C, \leq^*)$, the collection of all condensations of $C$ which are in $\mathcal H$. This adds a stable ordered-union ultrafilter on $FIN$ such that 
$FU(C)\in \mathcal U$. In fact, if $G$ is a generic subset of $\mathcal{H}\upharpoonright C$, then $\mathcal U_G=\{A\subseteq FIN: \exists B\in G \,(FU(B)\subseteq A)\}$ is a stable union-ordered ultrafilter and $FU(C)$ belongs to it.

Notice that for every $n$, $FU(A_n)\in \mathcal U$. Applying Theorem \ref{stableRam} 
we get $B\leq C$ with $FU(B)\in \mathcal U$,  which diagonalizes 
$\{A_n: n\in \omega\}$ whitin $A$.

$B$ is in the ground model since no reals are added by this forcing. 
And $B\in \mathcal H$ by definition of $\mathcal U_G$ and the fact that $\mathcal Hose$ is upwards closed.
Using the fact that every selective coideal is $\sigma$-closed the proof is finished.
\end{proof}

\section{Selective ultrafilters and stable ordered-T ultrafilters}\label{selandstab}

In this section we complete our remarks concerning  the relation between stable ordered-union ultrafilters on $FIN$ and selective ultrafilters on the topological Ramsey space $\fininf$.

Recall that for an ultrafilter $\mathcal U$ on $\FIN_k$, 
$$\mathcal U^\infty=\{A\in \finkinf : [A]\in \mathcal U\}.$$

By Theorem \ref{stableRam}, 
if $\mathcal U$ is a stable ordered-T ultrafilter on $\FIN_k$, then $\mathcal U^\infty$ is a selective  ultrafilter on $\finkinf$. The following is a sort of reverse implication.

\begin{thm}\label{uv}
 If $\mathcal V$ is a selective  ultrafilter on $\finkinf$, then the filter $\mathcal U$ on $\FIN_k$ generated by $\{ [A]: A\in \mathcal V\}$ is a stable ordered-T ultrafilter. 
\end{thm}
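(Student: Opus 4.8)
The plan is to show that $\mathcal{U}$, the filter on $\FIN_k$ generated by $\{[A]:A\in\mathcal V\}$, satisfies the three defining features of a stable ordered-T ultrafilter: it is an ultrafilter, it has a basis of sets $[A]$ with $A\in\finkinf$ (i.e.\ it is ordered-T), and it is stable. The ordered-T property is immediate from the definition, since by construction $\{[A]:A\in\mathcal V\}$ is a filter basis for $\mathcal U$; the only thing to check is that it genuinely is a filter basis, i.e.\ that it is closed under finite intersections up to refinement. This follows because $\mathcal V$ is a filter on $(\finkinf,\leq)$: given $A,B\in\mathcal V$ there is $C\in\mathcal V$ with $C\leq A$ and $C\leq B$, and then $[C]\subseteq[A]\cap[B]$ since $\leq$ means condensation and condensation is transitive with respect to passing to generated subsemigroups.

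Next I would verify that $\mathcal U$ is an ultrafilter on $\FIN_k$. Let $X\subseteq\FIN_k$; I want $A\in\mathcal V$ with $[A]\subseteq X$ or $[A]\subseteq\FIN_k\setminus X$. Fix any $A_0\in\mathcal V$ and work inside $[\emptyset,A_0]$. The idea is to run the pigeonhole principle A4 (mod $\mathcal V$), which holds because $\mathcal V$, being a coideal (indeed an ultrafilter on $\finkinf$), satisfies A4 mod $\mathcal V$, iterated over the levels: for each $a\in\finkfin\upharpoonright A_0$ with $\depth_{A_0}(a)=n$, the partition of $\FIN_k^{|a|+1}$ induced by membership in $X$ of the last block gives, by A4 mod $\mathcal V$, a set in $\mathcal V\cap[n,A_0]$ homogeneous at that level. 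Collecting these into a dense-open system $\{\mathcal D_a\}$ and diagonalizing using selectivity of $\mathcal V$ (via Lemma~\ref{filtered} and Definition~\ref{selective}), one obtains $B\in\mathcal V$ such that, for every $a\sqsubset B$, whether $r_{|a|+1}$ of an extension lands in $X$ depends only on $\depth(a)$; then a further application of A4/selectivity (or just the maximality of $\mathcal V$ applied to the corresponding subset of $\finkinf$) yields $A\in\mathcal V$ with $[A]$ entirely inside $X$ or entirely outside. Alternatively, and more cleanly, one can argue: the set $\mathcal X_X=\{A\in\finkinf:[A]\subseteq X\}$ together with $\mathcal X_{\FIN_k\setminus X}$ — one checks that for a selective (hence semiselective) coideal, Theorem~\ref{fininfram}-style arguments localized to $\mathcal V$ give that $\mathcal V$ decides the ``$\mathcal V$-Ramsey'' set determined by $X$, so some $A\in\mathcal V$ has $[A]$ homogeneous. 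Either route uses only A4 mod $\mathcal V$ and selectivity of $\mathcal V$.

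For stability, let $\{[A_n]:n\in\omega\}\subseteq\mathcal U$ with each $A_n\in\finkinf$; since $\mathcal U$ has basis $\{[A]:A\in\mathcal V\}$, I may assume $A_n\in\mathcal V$, and by replacing $A_n$ with a common refinement of $A_0,\dots,A_n$ inside $\mathcal V$ (using that $\mathcal V$ is a filter) I may assume $A_{n+1}\leq A_n$. Now apply selectivity of $\mathcal V$ to the decreasing sequence $(A_n)$ within $A_0$: this yields $E\in\mathcal V$ diagonalizing $(A_n)$ within $A_0$, meaning for every $b\in\finkfin\upharpoonright E$ with $\depth_{A_0}(b)=n$ we have $[b,E]\subseteq[b,A_n]$. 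From the diagonalization property one reads off $E\leq^\ast A_n$ for every $n$: all but the initial blocks of $E$ whose depth exceeds $n$ are condensations of $A_n$. Since $E\in\mathcal V$, $[E]\in\mathcal U$, which is exactly what stability requires.

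The main obstacle I anticipate is the ultrafilter step — specifically, passing from ``the partition of $\FIN_k$ is decided level-by-level by elements of $\mathcal V$'' to ``a single $[A]$, $A\in\mathcal V$, is monochromatic.'' This is precisely the content of A4 mod $\mathcal V$ combined with selectivity, and it is the place where one must be careful that the homogeneous sets produced at successive levels can be amalgamated into one infinite block sequence still in $\mathcal V$; this is handled exactly as in the proof that $\mathcal V$-Ramsey sets exist for open sets (Lemma~\ref{openram} and the Galvin-type Lemma~\ref{galvinlocal}), applied to the clopen set $\{A:[A]\subseteq X\}$. Once that is in hand, ordered-T and stability are comparatively routine consequences of $\mathcal V$ being a selective coideal that happens to be maximal.
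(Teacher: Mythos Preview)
Your argument is correct, and the filter-basis and stability parts match the paper almost verbatim. The one genuine divergence is in the ultrafilter step. The paper argues by maximality: if $\mathcal U$ were not an ultrafilter, one could pass to a proper ordered-T extension $\mathcal U'$, and then $(\mathcal U')^\infty$ would be a filter on $(\finkinf,\leq)$ properly containing $\mathcal V$, contradicting clause (b) of Definition~\ref{ultra}. You instead produce a homogeneous $[A]$, $A\in\mathcal V$, for an arbitrary $X\subseteq\FIN_k$ via the localized pigeonhole principle.

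Your route is sound but considerably heavier than necessary. A single application of A4 mod $\mathcal V$ at $a=\emptyset$ already finishes the job: the coloring $\mathcal O=X\subseteq\FIN_k^{1}$ yields $B\in\mathcal V$ with $r_1[\emptyset,B]=[B]$ contained in $X$ or in $X^c$, so $[B]\in\mathcal U$ decides $X$. There is no need to iterate over depths, build a dense-open system, or invoke selectivity or Lemma~\ref{galvinlocal} here; the ``obstacle'' you flag does not actually arise at level one. The paper's maximality argument is shorter still and avoids pigeonholing altogether, at the cost of relying on the specific filter-maximality clause in the definition of an ultrafilter on $\finkinf$. Your approach has the advantage of being self-contained and of showing directly that $\mathcal U$ is an ultrafilter in the ordinary sense on $\FIN_k$, without passing through an auxiliary ordered-T extension.
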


\begin{proof}

Let $\mathcal V$ be a selective utrafilter on $\finkinf$, and let $\mathcal U$ be the filter on $\FIN_k$ generated by $\{[A]: A\in \mathcal V\}$. $\mathcal U$ is in fact a filter because if $A,B\in \mathcal V$ then there exists $C\in \mathcal V$ such that $C\leq A$ and $C\leq  B$, and it is obvious that  it is an ordered-T filter.
To see that it is an ultrafilter on $\FIN_k$, suppose that there is a ordered-T ultrafilter $\mathcal U'$ properly extending $\mathcal U$. Take $X\in \mathcal U'\setminus \mathcal U$.
Then there is $A\in \finkinf$ such that $[A]\subseteq X$, and $A\notin \mathcal V$. But then $\mathcal (U')^\infty$ is a filter properly containing $\mathcal V$.

Let us now verify that $\mathcal U$ is stable. Given a sequence $\{A_n: n\in \omega\}\subseteq \finkinf$
 such that $[A_n]\in \mathcal U$ for every $n$, we use the selectivity of $\mathcal V$ to obtain $E$ which is an almost condensation of each $A_n$. 
 For every $n$, since $[A_n]\in \mathcal U$, there is $B_n\in \mathcal V$ such that $[B_n]\subseteq [A_n]$, and therefore $B_n\leq A_n$.
 Now we work with the $B_n$'s, and using the fact that they belong to the ultrafilter $\mathcal V$, we construct a descending sequence $\{C_n: n\in \omega\}$
 as follows, $C_0=B_0$ and if $C_n$ has been defined, we let  $C_{n+1}$ be an element $C$ of $\mathcal V$ such that $C\leq C_n $ and $C\leq B_n$.
 The selectivity of $\mathcal V$ gives us a diagonalization $E$ of the sequence $\{C_n: n\in \omega\}$ in the ultrafilter. Then $E$ is an almost condensation of each $A_n$.
 
\end{proof}

We will now prove that any semiselective ultrafilter on $\finkinf$ is in fact selective.

\begin{thm}
If $\mathcal V$ is a semiselective ultrafilter on $\finkinf$, then it is selective.
\end{thm}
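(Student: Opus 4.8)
The plan is to show that a semiselective ultrafilter $\mathcal V$ on $\finkinf$ satisfies the stronger diagonalization property of Definition~\ref{selective}, namely that every $\leq$-decreasing sequence $(A_n)_{n\in\mathbb N}$ of members of $\mathcal V$ (with $[a,A_n]\neq\emptyset$) admits a diagonalization inside $\mathcal V$. The point is that for an \emph{ultrafilter} the abstract dense-open families appearing in semiselectivity can be chosen so as to produce a genuine pointwise diagonalization, because membership in $\mathcal V$ lets us always ``decide'' which branch to take.

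First I would fix $[a,A]$ with $A\in\mathcal V$ and a sequence $(A_n)_{n}$ as in Definition~\ref{selective}. For each $b\in\finkfin\upharpoonright A$ with $\depth_A(b)=n$, define
\[
\mathcal D_b=\{\,C\in\mathcal H\cap[n,A]\ :\ C\leq A_n\ \text{or}\ [b,C]\cap\mathcal V=\emptyset\,\},
\]
where here $\mathcal H=\mathcal V$ is our ultrafilter viewed as a coideal. I claim each $\mathcal D_b$ is dense open in $\mathcal V\cap[n,A]$: given any $C'\in\mathcal V\cap[n,A]$, since $[b,C']$ is a neighborhood and $A_n\in\mathcal V$, by property ($\mathbf{A3}\bmod\mathcal V$) and the fact that $\mathcal V$ is a filter closed downward through $[b,A_n]$ we may refine $C'$ to some $C\in\mathcal V$ below both $C'$ and $A_n$ (using clause (c.2) of Definition~\ref{ultra} together with $A_n\leq A$); such a $C$ lies in $\mathcal D_b$, and openness is immediate since the condition ``$C\leq A_n$'' is inherited by condensations. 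Applying semiselectivity of $\mathcal V$ with $C=A$, we obtain $B\in\mathcal V\upharpoonright A$ and a witnessing family $(A_b)$ with $A_b\in\mathcal D_b$ and $[b,B]\subseteq[b,A_b]$ for every $b\in\finkfin\upharpoonright B$.

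Next I would argue that this $B$ is the desired diagonalization within $A$. Take $b\in\finkfin\upharpoonright B$ with $\depth_A(b)=n$; then $[b,B]\subseteq[b,A_b]$ and $A_b\in\mathcal D_b$. The second alternative in the definition of $\mathcal D_b$, that $[b,A_b]\cap\mathcal V=\emptyset$, is impossible: $[b,B]\subseteq[b,A_b]$ and $B\in\mathcal V$, so by ($\mathbf{A3}\bmod\mathcal V$) (clause (c.1)) $[b,B]\cap\mathcal V\neq\emptyset$. Hence $A_b\leq A_n$, so $[b,B]\subseteq[b,A_b]\subseteq[b,A_n]$, which is exactly the diagonalization condition of Definition~\ref{selective}. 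Finally, $B\in\mathcal V\cap[a,A]$ follows because we can run the argument starting from $[a,A]$ rather than $[0,A]$ (replacing $[n,A]$ by $[n,A]$ with the depth computed in $A$, and intersecting the first $\mathcal D_b$ with $[a,A]$), which is legitimate since $[a,A]\neq\emptyset$ with $A\in\mathcal V$. Therefore $\mathcal V$ is selective.

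The main obstacle I anticipate is verifying density of the families $\mathcal D_b$ \emph{within the ultrafilter}: one must be sure that from an arbitrary $C'\in\mathcal V\cap[n,A]$ one can descend to a common lower bound of $C'$ and $A_n$ that \emph{still lies in $\mathcal V$}, and for this the filter clauses (a)(3) and (c.2) of Definition~\ref{ultra} are essential — the coideal axioms alone would not suffice, which is precisely why the statement holds for ultrafilters but not for general semiselective coideals. A secondary point requiring care is keeping track of $\depth_A$ versus $\depth_B$ when the sequence $(A_n)$ is indexed by depth in $A$; since $\depth_A(b)\le\depth_B(b)$ is false in general but $\depth_A(b)$ is what the definition of diagonalization uses, I would simply index $\mathcal D_b$ by $n=\depth_A(b)$ throughout, exactly as in the proof of Lemma~\ref{galvinlocal}, so that no reindexing is needed.
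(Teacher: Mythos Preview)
Your density claim for $\mathcal D_b$ fails, and this is a genuine gap that breaks the argument. Recall that $[n,A]$ abbreviates $[r_n(A),A]$, so any $C\in\mathcal V\cap[n,A]$ must satisfy $r_n(C)=r_n(A)$; in particular the first $n$ blocks of $C$ are exactly the first $n$ blocks of $A$. On the other hand, $C\leq A_n$ requires every block of $C$, including those first $n$, to lie in $[A_n]$. But nothing in Definition~\ref{selective} forces $r_n(A)\subseteq[A_n]$: the sequence $(A_n)$ is only assumed to be a decreasing sequence inside $\mathcal V\upharpoonright A$ with $[a,A_n]\neq\emptyset$, and it is easy to have $A_n$ whose support begins well above $\max(\operatorname{supp}(r_n(A)))$. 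In that situation no $C\in[n,A]$ whatsoever can satisfy $C\leq A_n$, and by clause~(c.1) of Definition~\ref{ultra} the alternative ``$[b,C]\cap\mathcal V=\emptyset$'' never occurs for $C\in\mathcal V\cap[n,A]$. Hence your set $\mathcal D_b$ is empty, not dense. The filter axiom you invoke does produce a common refinement $D\in\mathcal V$ of $C'$ and $A_n$, but that $D$ will not lie in $[n,A]$, and correcting its initial segment (by prepending $r_n(A)$) destroys the relation $D\leq A_n$.

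This obstruction is not a technicality: it is precisely the gap between the $\leq^*$-diagonalization that semiselectivity gives (Lemma~\ref{distributive}) and the genuine $\leq$-diagonalization that selectivity demands. The paper closes this gap indirectly. It passes to the ordered-$T$ ultrafilter $\mathcal U$ on $\FIN_k$ generated by $\{[A]:A\in\mathcal V\}$, uses the semiselective Galvin lemma (Lemma~\ref{galvinlocal}) to show $\mathcal U$ has the Ramsey property for pairs, and then invokes Theorem~\ref{stableRam}: the implication $3\Rightarrow 1$ gives that $\mathcal U$ is stable, and the implication $1\Rightarrow 2$ (which relies on Krautzberger's splitting-point argument) yields that $\mathcal U^\infty=\mathcal V$ is selective. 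The splitting-point construction is exactly what upgrades an almost-condensation witness to an honest diagonalization, and any direct proof along the lines you propose would have to reproduce that work.
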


\begin{proof}
Notice that the ultrafilter $\mathcal U$ on $\FIN_k$ generated by $\{[A]: A\in \mathcal V\}$ is an ordered-T ultrafilter. Let us show that it has the  Ramsey property for pairs.
Let $\mathcal F\subseteq [\FIN_k]_<^2$ (thus, $\mathcal F$ determines a partition of $[FIN]_<^2$ into two parts).   Fix $A\in \mathcal V$. 
Since $\mathcal V$ is, in particular,  a semiselective coideal on $\finkinf$, and $\mathcal F\subseteq \finkfin$, by Lemma \ref{galvinlocal} there is $B\in \mathcal V\upharpoonright A$ such that either $\finfin\upharpoonright B\cap \mathcal F=\emptyset$ or $\forall C\in [\emptyset , B] (\exists n \in \mathbb N) (r_n(C)\in \mathcal F$.
In the first case, $[[B]]_<^2 \cap \mathcal F=\emptyset$; and in the second case, $[[B]]_<^2\subseteq \mathcal F$.

By Theorem \ref{stableRam} the ultrafilter $\mathcal U$ is therefore stable, 
and   the ultrafilter $\mathcal U^\infty=\mathcal V$ is selective.
\end{proof}


\section{Parametrizing with perfect sets.}

In this section we will study a parametrized version of the $\mathcal H$-Ramsey property for elements of $\finkinf$  introduced in Section \ref{block}. Let $2^{\infty}$ be the space of infinite sequences of 0's and
1's, with the product topology regarding $2 = \{0, 1\}$ as a
discrete space. Also, $2^{<\infty}$ denotes the set of finite
sequences of 0's and 1's. Let us consider some features of the
perfect subsets of $2^{\infty}$, following \cite{Paw}:

Some notation is needed. For $x = (x_{n})_n \in 2^{\infty}$, $x|_{
k}$ denotes the finite sequence \linebreak $(x_{0}, x_{1}, \cdots, x_{k-1})$.
For $u\in 2^{<\infty}$, let $[u] = \{x\in 2^{\infty}: (\exists k)
(u = x|_{k})\}$ and let $|u|$ be the lenth of $u$. Given a perfect
set $Q\subseteq 2^{\infty}$, let $T_{Q}$ be its associated perfect
tree.  For $n\in\mathbb N$, let $T_P\upharpoonright n = \{u\in T_P : |u|=n\}$. Also, for $u, v = (v_{0}, v_{1}, \cdots, v_{|v|-1})\in
2^{<\infty}$ we write $u\sqsubseteq v$ to mean $(\exists k\leq
|v|) (u = (v_{0}, v_{1}, \cdots, v_{k-1}))$. For each $u\in
2^{<\infty}$, let $Q(u) = Q\cap [u(Q)]$, where $u(Q)\in T_{Q}$ is
define inductively, as follows: $\emptyset(Q) = \emptyset$.
Suppose $u(Q)$ defined. Find $\sigma\in T_{Q}$ such that $\sigma$
is the $\sqsubseteq$-extension of $u(Q)$ where the first ramification extending $u(Q)$ occurs. Then, set $(u*i)(Q)$ = $\sigma*i$, $i = 0,1$.
Here "$*$" denotes concatenation. Note that for each $n$, $Q$ =
$\bigcup\{Q(u) : u\in 2^{n}\}$.


\begin{defn}\label{perf ram}
Let $\mathcal H\subseteq\finkinf$ be a semiselective coideal. We say that a set $\mathcal{X}\subseteq
2^{\infty}\times\finkinf$ is \textbf{perfectly $\mathcal H$-Ramsey} if for
every perfect set $Q\subseteq 2^{\infty}$ and every neighborhood
$[a,A]\neq\emptyset$ in $\finkinf$ with $A\in\mathcal H$ there exist a perfect set $S\subseteq Q$ and
$B\in [a,A]\cap\mathcal H$ such that $S\times [a,B]\subseteq \mathcal{X}$ or
$S\times [a,B]\cap \mathcal{X} = \emptyset$. A set
$\mathcal{X}\subseteq 2^{\infty}\times\finkinf$ is
\textbf{perfectly $\mathcal H$-Ramsey null} if for every perfect set
$Q\subseteq 2^{\infty}$ and every neighborhood
$[a,A]\neq\emptyset$ in $\finkinf$ with $A\in\mathcal H$ there exist a perfect set $S\subseteq Q$ and
$B\in [a,A]\cap\mathcal H$ such that $S\times [a,B]\cap \mathcal{X} =
\emptyset$.
\end{defn}

Also, we will need to adapt the notion of abstract Baire
property (see \cite{Mor}) to this context:

\begin{defn}\label{perf baire}
Let $\mathbb{P}$ be the family of perfect subsets of $2^{\infty}$ and let $\mathcal H\subseteq\finkinf$ be a semiselective coideal. We will say that a set $\mathcal{X}\subseteq 2^{\infty}\times
\finkinf$ has the $\mathbb{P}\times
Exp(\mathcal{H})$-\textbf{Baire property} if for every perfect set
$Q\subseteq 2^{\infty}$ and every neighborhood $[a,A]\neq\emptyset$ in $\finkinf$ with $A\in\mathcal H$  there exist
a perfect set $S\subseteq Q$ and a nonempty neighborhood $[b,B]\subseteq
[a,A]$ with $B\in\mathcal H$ such that $S\times [b,B]\subseteq \mathcal{X}$ or $S\times
[b,B]\cap \mathcal{X} = \emptyset$. A set $\mathcal{X}\subseteq
2^{\infty}\times\finkinf$ is $\mathbb{P}\times
Exp(\mathcal{H})$-\textbf{meager} if for every perfect set
$Q\subseteq 2^{\infty}$ and every neighborhood $[a,A]\neq\emptyset$ in $\finkinf$ with $A\in\mathcal H$  there exist
a perfect set $S\subseteq Q$ and a nonempty neighborhood $[b,B]\subseteq
[a,A]$ with $B\in\mathcal H$ such that $S\times [b,B]\cap\mathcal{X} = \emptyset$.
\end{defn}

Next, we state the main result of this section:

\begin{thm}\label{thm1}
Let $\mathbb{P}$ be the family of perfect subsets of $2^{\infty}$ and let $\mathcal H\subseteq\finkinf$ be a semiselective coideal.  The following are true:

\begin{itemize}
\item[{(a)}]$\mathcal{X}\subseteq 2^{\infty}\times\finkinf$ is perfectly $\mathcal{H}$-Ramsey iff $\mathcal{X}$ has the $\mathbb{P}\times Exp(\mathcal{H})$-Baire Property.
\item[{(b)}]$\mathcal{X}\subseteq 2^{\infty}\times \finkinf$ is perfectly $\mathcal{H}$-Ramsey null iff $\mathcal{X}$ is $\mathbb{P}\times Exp(\mathcal{H})$-meager.
\end{itemize}
\end{thm}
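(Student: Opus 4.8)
The plan is to prove Theorem \ref{thm1} by establishing the two implications of part (a) together, and then deriving part (b) as the "null" (one-sided) analogue of the same argument. The forward direction (perfectly $\mathcal{H}$-Ramsey $\Rightarrow$ $\mathbb{P}\times\Exp(\mathcal{H})$-Baire) is immediate, since a perfect $S\subseteq Q$ and $B\in[a,A]\cap\mathcal{H}$ with $S\times[a,B]$ decided by $\mathcal{X}$ already witnesses the Baire property with $[b,B]=[a,B]$. The substance is the converse: assume $\mathcal{X}$ has the $\mathbb{P}\times\Exp(\mathcal{H})$-Baire property and show it is perfectly $\mathcal{H}$-Ramsey. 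Fix a perfect $Q$ and a nonempty $[a,A]$ with $A\in\mathcal{H}$; by the usual reduction (shifting $a$ into the "stem") we may assume $a=\emptyset$.

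The core of the argument is a fusion construction combining a diagonalization inside the semiselective coideal $\mathcal{H}$ (in the style of Lemma \ref{galvinlocal} and the Claim preceding it) with a shrinking of the perfect tree $T_Q$, indexed by the combinatorics of the tree $Q(u)$, $u\in 2^{<\infty}$. First I would localize: for each $u\in 2^{<\infty}$ and each $s\in\finkfin\upharpoonright A$, consider the collection
$$\mathcal{D}_{u,s}=\{C\in\mathcal{H}\cap[\depth_A(s),A]\colon (\exists\, S_u\subseteq Q(u)\text{ perfect})\ (S_u\times[s,C]\subseteq\mathcal{X}\text{ or }S_u\times[s,C]\cap\mathcal{X}=\emptyset)\}.$$
Using the $\mathbb{P}\times\Exp(\mathcal{H})$-Baire property (applied with the perfect set $Q(u)$ and the neighborhood $[s,C']$) one checks each $\mathcal{D}_{u,s}$ is dense open in $\mathcal{H}\cap[\depth_A(s),A]$ — here one must be slightly careful that the Baire property only returns some $[b,B]\subseteq[s,C']$, not a $B$ with stem exactly $s$, so one passes to the $s$-part $B/s$ and uses that density/openness is witnessed cofinally; this is the kind of routine bookkeeping that the definition of "decides $s$" in Lemma \ref{comb forcing} was designed to absorb. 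Then, running over countably many pairs $(u,s)$, apply semiselectivity (Proposition \ref{semiequiv}, Lemma \ref{filtered}) to obtain a single $B\in\mathcal{H}\upharpoonright A$ diagonalizing the whole family $(\mathcal{D}_{u,s})$, so that for every $s\in\finkfin\upharpoonright B$ and every $u$ there is a perfect $S^{s}_u\subseteq Q(u)$ with $S^s_u\times[s,B]$ decided.

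Next I would run the perfect-set fusion: build a decreasing sequence of perfect subtrees of $T_Q$ along levels, at stage $n$ handling all $u\in 2^{n}$ and (via the approximation functions $r_n$) all relevant $s$ of depth controlled by $n$, using at each step the decided perfect set $S^s_{u}$ inside $Q(u)$ to prune $T_Q$ below the $n$-th ramification while keeping it perfect. The limit $S=\bigcap_n S_n$ is perfect and $S\subseteq Q$; simultaneously one extracts, by a further diagonalization/coloring argument of Nash--Williams--Galvin--Prikry type (exactly as in the proof of Lemma \ref{openram}, now applied to the "decided colour" $\in\{\text{in},\text{out},\text{undecided}\}$ assigned to pairs $(u,s)$), a single $B'\leq B$ in $\mathcal{H}$ for which the colour stabilizes, giving $S\times[\emptyset,B']\subseteq\mathcal{X}$ or $S\times[\emptyset,B']\cap\mathcal{X}=\emptyset$. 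Finally, part (b) is proved by the same construction, simply dropping the "$\subseteq\mathcal{X}$" alternative everywhere: the density of the $\mathcal{D}_{u,s}$ for the empty-intersection alternative follows from $\mathbb{P}\times\Exp(\mathcal{H})$-meagerness, and the fusion yields $S\times[a,B']\cap\mathcal{X}=\emptyset$.

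The main obstacle is the interaction between the two fusions: the perfect-tree fusion on $2^{\infty}$ and the coideal diagonalization on $\finkinf$ must be interleaved so that the perfect set $S$ chosen to decide a pair $(u,s)$ is not destroyed by later pruning needed for other pairs, and so that the single $B'$ works uniformly for the final $S$ rather than for the intermediate approximations. The standard fix — which I would carry out — is to enumerate all pairs $(u,s)$ in order type $\omega$, ensure at stage $n$ that the commitment made for pair $n$ only constrains $T_Q$ above a level that all later stages respect, and exploit that $\mathcal{H}$ being semiselective gives a genuine diagonalization (not merely a countable decreasing sequence), so that the "last" $B'$ absorbs all countably many requirements at once; the openness half of "dense open" is what guarantees earlier commitments survive passage to $B'$.
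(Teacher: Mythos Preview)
There is a genuine gap in the argument, and it sits exactly where you flag ``one must be slightly careful'': the sets $\mathcal{D}_{u,s}$ as you define them are \emph{not} dense in $\mathcal{H}\cap[\depth_A(s),A]$, and the hand-wave about passing to $B/s$ does not repair this. The $\mathbb{P}\times Exp(\mathcal{H})$-Baire property applied to $Q(u)\times[s,C']$ only returns a perfect $S\subseteq Q(u)$ and a neighborhood $[b,B]\subseteq[s,C']$ with $S\times[b,B]$ decided, where in general $b\sqsupsetneq s$ strictly. There is no way to pull this decision back to stem $s$: for any $C\leq C'$ the larger product $S\times[s,C]$ contains points whose approximations extend $s$ but are incompatible with $b$, and the Baire property says nothing about those. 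So you cannot get below an arbitrary $C'$ into your $\mathcal{D}_{u,s}$, the diagonalization has nothing to diagonalize, and the rest of the construction never starts. Your later mention of a three-valued colour including ``undecided'' suggests you sense this, but the argument as written never reconciles that with the two-valued $\mathcal{D}_{u,s}$.

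The paper's proof is organized precisely to avoid this trap. Instead of asking for a decision at stem $s$, it sets up a combinatorial forcing where $(Q,A)$ either \emph{accepts} $(u,s)$ (meaning $Q(u)\times[s,A]\subseteq\mathcal{X}$) or \emph{rejects} it (meaning no perfect $S\subseteq Q(u)$ and no $B\in[s,A]\cap\mathcal{H}$ accept). This dichotomy is always attainable below any condition (Lemma \ref{lem1}(c)), so the corresponding dense sets really are dense, and a single product fusion (Lemma \ref{dense prod fusion}, Lemma \ref{lem2}) produces $(Q_1,B_1)$ deciding every $(u,b)$ with $|u|\geq\depth_{B_1}(b)$. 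One then feeds the ``accept'' locus into a parametrized Galvin lemma (Theorem \ref{thm2}); if that does not already give $S\times[\emptyset,D]\subseteq\mathcal{X}$, the whole procedure is repeated for $\mathcal{X}^c$. The Baire property is invoked only at the very end: if both passes land in the ``reject everything'' alternative, one exhibits a product $S_2\times[\emptyset,D_2]$ inside which no smaller $R\times[b,C]$ can lie in $\mathcal{X}$ and none in $\mathcal{X}^c$, contradicting the Baire hypothesis. In short, the Baire property is used once, as a contradiction device, not as a source of density; your scheme tries to use it as the latter, and that is where it breaks.
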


In order to prove Theorem \ref{thm1} we will need to introduce two combinatorial forcings and prove a series of lemmas related to them. Before doing that, we will state the following definition and a related lemma which will be useful in the sequel.

\begin{defn} Let $\mathcal S$ be a nonempty subset of $\mathbb P\times\finkinf$. A set $\mathcal D\subseteq\mathcal S$ is dense open in $\mathcal S$ if:
\begin{itemize}
\item[{(i)}] For every $(Q,B)\in\mathcal S$ there exists $(P, A)\in\mathcal D$ such that $P\times A\subseteq Q\times B$; and 
\item[{(ii)}] If $(P, A)\in\mathcal D$ and $Q\times B\subseteq P\times A$ then, $(Q, B)\in\mathcal D$.
\end{itemize}
\end{defn}

\begin{lem}\label{dense prod fusion}
Let $\mathcal H\subseteq\finkinf$ be a semiselective coideal and let $\mathcal S$ be a nonempty subset of $\mathbb P\times\mathcal H$. If $\mathcal D_n$, $n\in\mathbb N$, are dense open subsets of $\mathcal S$ then, the set \[\mathcal D_{\infty} = \{(P, A) : (\forall u\in T_P\upharpoonright n)\, (\forall a\in [A] \mbox{ with } \depth_A(a)\leq n)\; (P\cap [u], A/a)\in \mathcal D_n \}\] is dense open in $\mathcal S$. \end{lem}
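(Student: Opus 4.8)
The plan is to use a diagonalization argument over a fusion-style enumeration, combining the semiselectivity of $\mathcal H$ with the standard fusion of perfect trees. The openness of $\mathcal D_\infty$ is immediate from the definitions: if $(P,A)\in\mathcal D_\infty$ and $Q\times B\subseteq P\times A$, then for each $u\in T_Q\upharpoonright n$ there is $u'\in T_P\upharpoonright n$ with $Q\cap[u]\subseteq P\cap[u']$, and for each $a\in[B]$ with $\depth_B(a)\le n$ one checks $A/a' \ge B/a$ for the appropriate $a'\sqsubseteq a$ in $[A]$; since $\mathcal D_n$ is open in $\mathcal S$, $(Q\cap[u],B/a)\in\mathcal D_n$, so $(Q,B)\in\mathcal D_\infty$. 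The work is in density.

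For density, fix $(Q_0,A_0)\in\mathcal S$. I would build a decreasing sequence $(Q_n,A_n)$ (really a fusion sequence) so that the limit lands in every $\mathcal D_n$ simultaneously. At stage $n$, there are only finitely many pairs $(u,a)$ with $u\in T_{Q_n}\upharpoonright n$ and $a\in[A_n]$ with $\depth_{A_n}(a)\le n$; enumerate them and, walking through this finite list one pair at a time, use the density of $\mathcal D_n$ in $\mathcal S$ to shrink $Q_n\cap[u]$ to a perfect subset and shrink $A_n/a$ to an element of $\mathcal H$ so that the resulting restricted pair enters $\mathcal D_n$. Reassembling the finitely many shrunk pieces of $Q_n$ across the nodes $u\in T_{Q_n}\upharpoonright n$ gives a new perfect set $Q_{n+1}\subseteq Q_n$ agreeing with $Q_n$ up to level $n$ (this is the usual perfect-tree fusion, so $Q_\infty := \bigcap_n Q_{n}$ is perfect). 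Handling the block-sequence coordinate uniformly is exactly where Lemma \ref{filtered} and semiselectivity enter: for each $n$, the sets $\mathcal D_a := \{C\in\mathcal H\cap[\depth_{A_0}(a),A_0] : (\text{for the relevant } u)\ (\text{restricted pair})\in\mathcal D_n\}$ are dense open in $\mathcal H\cap[\depth_{A_0}(a),A_0]$, so one produces a single $A_\infty\in\mathcal H\upharpoonright A_0$ that diagonalizes the whole collection $(\mathcal D_a)_{a\in\finkfin\upharpoonright A_0}$, giving $A_\infty/a$ in the required dense set whenever $\depth_{A_0}(a)=n$.

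Then $(Q_\infty,A_\infty)$ is the desired element of $\mathcal D_\infty$ refining $(Q_0,A_0)$: for any $n$, any $u\in T_{Q_\infty}\upharpoonright n$ and any $a\in[A_\infty]$ with $\depth_{A_\infty}(a)\le n$, we have $Q_\infty\cap[u]\subseteq Q_{n+1}\cap[u] \subseteq$ (the perfect set chosen at stage $n$ for $u$) and $A_\infty/a \le$ (the element of $\mathcal H$ chosen at stage $n$ for the corresponding pair), so openness of $\mathcal D_n$ gives $(Q_\infty\cap[u], A_\infty/a)\in\mathcal D_n$. Finally $(Q_\infty,A_\infty)\in\mathcal S$ since $\mathcal S\subseteq\mathbb P\times\mathcal H$ is closed downward under the product order (being a domain for the dense-open notion), and $Q_\infty\times A_\infty\subseteq Q_0\times A_0$.

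\textbf{Main obstacle.} The delicate point is doing the two fusions \emph{simultaneously and compatibly}: the perfect-tree fusion on the $2^\infty$ side must be arranged so that passing to the limit $Q_\infty$ does not destroy membership in $\mathcal D_n$ (hence the bookkeeping with $T_{Q_n}\upharpoonright n$ and stabilizing the tree below level $n$), while on the $\finkinf$ side one cannot fuse term-by-term and must instead invoke semiselectivity once, via Lemma \ref{filtered} and Proposition \ref{semiequiv}, to extract a single diagonalizing $A_\infty$ that simultaneously works for every node $u$ at every level. Getting the indexing of the $\mathcal D_a$'s by depth to line up with the level-$n$ requirement on $u$, and checking that the finitely-many-pairs-per-stage shrinking stays inside $\mathcal S$ and inside $\mathcal H$ at each step, is the part that requires care rather than new ideas.
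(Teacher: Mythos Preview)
The paper does not give its own proof of this lemma; it simply says ``the proof follows the same argument used in the proof of Lemma~2.5 of \cite{Fa}'' and leaves the details to the reader. Your fusion sketch---stabilizing the perfect tree level by level while using semiselectivity (via Lemma~\ref{filtered}) to produce a single diagonalizing $A_\infty$ on the $\finkinf$ side---is precisely the argument of Farah's Lemma~2.5 adapted to this setting, so you are on the same track the authors intend.

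One small point to tighten: you assert at the end that $(Q_\infty,A_\infty)\in\mathcal S$ because ``$\mathcal S$ is closed downward under the product order (being a domain for the dense-open notion).'' Nothing in the lemma \emph{statement} says $\mathcal S$ is downward closed; that is a hidden hypothesis you are importing. It is harmless here because in both applications in the paper (Lemmas~\ref{lem2} and~\ref{RamseyNull}) the set $\mathcal S$ is taken to be $\{(Q,B)\in\mathbb P\times\mathcal H : Q\subseteq P,\ B\leq A\}$, which is downward closed by construction, but you should state that assumption explicitly rather than derive it from the definition of ``dense open''. Similarly, your openness paragraph glosses over the depth bookkeeping: when $(Q,B)\le(P,A)$ and $a\in[B]$ with $\depth_B(a)\le n$, one does not in general have $\depth_A(a)\le n$; the clean way out is to note $Q\cap[u]\subseteq P\cap[u]$ and $B/a\le A$, then use the \emph{openness of $\mathcal D_n$} applied to a suitable pair $(P\cap[u],A/a')$ with $a'$ chosen so that $A/a'\ge B/a$ and $\depth_A(a')\le n$ (for instance $a'$ the union of the first blocks of $A$ lying below $a$, or simply $a'=\emptyset$). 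This is routine but should be said.
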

\begin{proof}
The proof of Lemma \ref{dense prod fusion} follows the same argument used in the proof of Lemma 2.5 of \cite{Fa} so we will leave it to the reader.
\end{proof}
 
 \bigskip
 
 Now, fix a semiselective coideal $\mathcal H\subseteq\finkinf$.

\bigskip

{\bf Combinatorial Forcing 1.} Fix $\mathcal{F}\subseteq
2^{<\infty}\times \FIN_k^{<\infty}$. For a perfect $Q\subseteq
2^{\infty}$, $A\in \mathcal H$ and a pair $(u,a)\in
2^{<\infty}\times \FIN_k^{<\infty}$, we say that $(Q,A)$
\textit{accepts} $(u,a)$ if for every $x\in Q(u)$ and for every
$B\in [a,A]\cap\mathcal H$ there exist integers $k$ and $m$ such that
$(x|_{k},r_{m}(B))\in \mathcal{F}$. We say that $(Q,A)$
\textit{rejects} $(u,a)$ if for every perfect $S\subseteq Q(u)$
and every $B\in [a,A]\cap\mathcal H$, $(S,B)$ does not accepts
$(u,a)$. Also, we say that $(Q,A)$ \textit{decides} $(u,a)$ if it
accepts or rejects it.

\vspace{.25 cm}

{\bf Combinatorial Forcing 2.} Fix $\mathcal{X}\subseteq
2^{\infty}\times\finkinf$. For a perfect $Q\subseteq
2^{\infty}$, $A\in \mathcal H$ and a pair $(u,a)\in
2^{<\infty}\times \FIN_k^{<\infty}$, we say that $(Q,A)$
\textit{accepts} $(u,a)$ if $Q(u)\times [a,A]\subseteq
\mathcal{X}$. We say that $(Q,A)$ \textit{rejects} $(u,a)$ if for
every perfect $S\subseteq Q(u)$ and every $B\in [a,A]\cap\mathcal H$, $(S,B)$ does not accepts $(u,a)$. And as before, we say
that $(Q,A)$ \textit{decides} $(u,a)$ if it accepts or rejects it.

\vspace{.5 cm}

{\bf Note:} Lemmas \ref{lem1}, \ref{lem2}, and \ref{lem3} below
hold {\em for both} combinatorial forcings defined above.

\begin{lem}\label{lem1}
Let  a perfect $Q\subseteq
2^{\infty}$, $A\in \mathcal H$ and a pair $(u,a)\in
2^{<\infty}\times \FIN_k^{<\infty}$ be given. The following are true:
\begin{itemize}
\item[{(a)}]If $(Q,A)$ accepts (rejects) $(u,a)$ then $(S,B)$ also
accepts (rejects) $(u,a)$,  for every perfect $S\subseteq Q(u)$
and every $B\in [a,A]\cap\mathcal H$. 
\item[{(b)}]If $(Q,A)$
accepts (rejects) $(u,a)$ then $(Q,B)$ also accepts (rejects)
$(u,a)$, for every $B\in [a,A]\cap\mathcal H$. 
\item[{(c)}] There exist a perfect set $S\subseteq Q$ and  $B\in [a,A]\cap\mathcal H$ such that $(S,B)$ decides $(u,a)$. 
\item[{(d)}]If $(Q,A)$ accepts $(u,a)$ then $(Q,A)$ accepts $(u,b)$ for every
$b\in r_{|a|+1}[a,A]$. 
\item[{(e)}]If $(Q,A)$ rejects $(u,a)$ then
there exist $B\in [\depth_A(a),A]\cap\mathcal H$ such that $(Q,A)$ does not
accept $(u,b)$ for every $b\in r_{|a|+1}[a,B]$.
\item[{(f)}]$(Q,A)$ accepts (rejects) $(u,a)$ iff $(Q,A)$ accepts
(rejects) $(v,a)$, for every $v\in 2^{<\infty}$ with $u\sqsubseteq
v$.
\end{itemize}
\end{lem}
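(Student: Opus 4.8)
\textbf{Proof plan for Lemma \ref{lem1}.} The plan is to prove parts (a)--(f) uniformly, since (as the Note indicates) the arguments depend only on the formal features shared by the two combinatorial forcings: that ``accepts'' is stated in terms of all $x \in Q(u)$ and all $B \in [a,A]\cap\mathcal H$, that ``rejects'' is the assertion that no pair below accepts, and that ``decides'' means ``accepts or rejects''. I would isolate at the outset the two monotonicity facts that the acceptance predicate shrinks correctly: if $S \subseteq Q(u)$ is perfect then $S(u) = S$ (because passing to a perfect subset of a set already of the form $Q(u)$ does not move the relevant ramification structure at the root), and if $B \in [a,A]\cap\mathcal H$ then $[a',B]\cap\mathcal H \subseteq [a,A]\cap\mathcal H$ for the appropriate $a'$ and $[b,B]\cap \mathcal H \subseteq [b,A]\cap\mathcal H$ more generally. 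With these in hand:

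\emph{Part (a).} For ``accepts'': if every $x \in Q(u)$ and every $B' \in [a,A]\cap\mathcal H$ yield the required $(x|_k, r_m(B'))$, then a fortiori this holds for every $x$ in the smaller set $S \subseteq Q(u)$ and every $B'$ in the smaller set $[a,B]\cap\mathcal H \subseteq [a,A]\cap \mathcal H$; so $(S,B)$ accepts $(u,a)$. For ``rejects'': suppose $(Q,A)$ rejects $(u,a)$ but some $(S,B)$ with $S \subseteq Q(u)$ perfect and $B \in [a,A]\cap\mathcal H$ does \emph{not} reject $(u,a)$; then there are a perfect $S' \subseteq S(u) = S \subseteq Q(u)$ and $B' \in [a,B]\cap \mathcal H \subseteq [a,A]\cap\mathcal H$ such that $(S',B')$ accepts $(u,a)$, contradicting rejection by $(Q,A)$. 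Thus $(S,B)$ rejects $(u,a)$.

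\emph{Part (b)} is the special case $S = Q(u)$ of (a) once one observes $Q(u)(u) = Q(u)$. \emph{Part (c)}: if $(Q,A)$ rejects $(u,a)$ take $S = Q$, $B = A$; otherwise, by definition of rejection failing, there exist a perfect $S \subseteq Q(u)$ and $B \in [a,A]\cap\mathcal H$ with $(S,B)$ accepting $(u,a)$, and that pair decides $(u,a)$ (extend $S$ to a perfect subset of $Q$ if one wants $S \subseteq Q$ rather than $S \subseteq Q(u)$; acceptance is preserved by (a)). \emph{Part (d)}: for $b \in r_{|a|+1}[a,A]$ one has $[b,A] \subseteq [a,A]$, so $[b,A]\cap\mathcal H \subseteq [a,A]\cap\mathcal H$, and $Q(u)$ is unchanged; hence acceptance of $(u,a)$ immediately gives acceptance of $(u,b)$. \emph{Part (e)} is the one place where the pigeonhole principle enters: assuming $(Q,A)$ rejects $(u,a)$, let $n = \depth_A(a)$ and apply \textbf{A.4} mod $\mathcal H$ to the set $\mathcal O = \{b \in \FIN_k^{|a|+1} : (Q,A)\ \text{accepts}\ (u,b)\}$ to obtain $B \in [n,A]\cap\mathcal H$ with $r_{|a|+1}[a,B]$ entirely inside $\mathcal O$ or entirely inside $\mathcal O^c$; the first alternative is impossible because, picking any $C \in [a,B]\cap\mathcal H$ and $b = r_{|a|+1}(C)$, acceptance of $(u,b)$ together with (a),(b) would propagate acceptance of $(u,a)$ back up (exactly the argument used for part \ref{not accept longer} of Lemma \ref{comb forcing}), contradicting rejection; so the second alternative holds, which is the conclusion. \emph{Part (f)}: this is bookkeeping about the operator $u \mapsto u(Q)$ --- for $u \sqsubseteq v$ one has $Q(v) \subseteq Q(u)$, and more precisely $Q(v) = Q(u)(v')$ for the corresponding tail $v'$, so $\bigcup$ of the $Q(v)$ over the immediate successors recovers $Q(u)$; acceptance, being a statement quantifying over all $x$ in the relevant slice, therefore passes up and down the $\sqsubseteq$-chain, and rejection follows by negating.

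The only genuinely substantive step is part (e), and even there the work is entirely delegated to \textbf{A.4} mod $\mathcal H$ (semiselectivity of $\mathcal H$ is not yet needed) together with the propagation argument already rehearsed for Lemma \ref{comb forcing}\eqref{not accept longer}; everything else is a matter of carefully tracking which slices $Q(u)$ and which neighborhoods $[a,A]\cap\mathcal H$ get smaller under the hypotheses, so the main ``obstacle'' is really just writing (a)--(f) so that the two forcings are handled by one argument rather than two. I would present (a) in full, note that (b),(c),(d),(f) are immediate from (a) and the definitions, and give (e) the pigeonhole argument in detail.
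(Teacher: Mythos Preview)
Your approach matches the paper's exactly: the paper simply asserts that (a), (b), (c), (d), and (f) ``follow from the definitions'' and then proves (e) in detail via \textbf{A.4} mod $\mathcal H$, precisely as you do (color by whether $(Q,A)$ accepts $(u,b)$, homogenize, and rule out the ``all accept'' side because it would force $(Q,B)$ to accept $(u,a)$, contradicting rejection via part (b)).

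One small technical slip worth flagging: the claim that $S(u)=S$ whenever $S\subseteq Q(u)$ is perfect is not correct in general, since $u(S)$ is computed from the branching structure of $S$, not of $Q$ (for instance, with $Q=2^\infty$, $u=\langle 0\rangle$, and $S=Q(u)$, one gets $S(u)=S\cap[00]\subsetneq S$). Fortunately your arguments only use the inclusion $S(u)\subseteq S\subseteq Q(u)$, which is always true, so nothing breaks; and for (b) it is cleaner to argue directly that $(Q,B)$ accepting $(u,a)$ involves the same set $Q(u)$ and the smaller family $[a,B]\cap\mathcal H\subseteq[a,A]\cap\mathcal H$, rather than routing through (a) with $S=Q(u)$.
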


\begin{proof}

\vspace{.25 cm}

(a), (b), (c), (d) and (f) follow from the definitions. Now to
proof (e), take $(u,a)$ with $|a| = m$ and suppose $(Q,A)$ rejects
it. Define $\phi : \FIN_k^{m+1}\rightarrow 2$ such that
$\phi(b) = 1$ iff $(Q,A)$ accepts $(u,b)$. Let $n = \depth_{A}(a)$.
By A4 mod $\mathcal H$, there exists $B\in [n,A]\cap\mathcal H$ such that $\phi$ is constant on
$r_{m+1}[a,B]$.

If $\phi$ takes value 1 on $r_{m+1}[a,B]$ then $(Q,B)$ accepts $(u,a)$. So, in virtue of part (b), $\phi$ must take value 0 on $r_{m+1}[a,B]$ since $(Q,A)$ rejects $(u,a)$. Then $B$ is as required.

\end{proof}

\begin{lem}\label{lem2}
For every perfect $P\subseteq 2^{\infty}$ and $A\in \mathcal H$
there exist a perfect $Q\subseteq P$ and $B\leq A$ in $\mathcal H$ such that
$(Q,B)$ decides $(u,a)$, for every $(u,a)\in 2^{<\infty}\times
 \finkfin\upharpoonright B$ with $\depth_{B}(a)\leq |u|$.
\end{lem}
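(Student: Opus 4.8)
The plan is to carry out a fusion argument, diagonalizing the combinatorial forcing over all pairs $(u,a)$, in close analogy with the proof of Theorem~\ref{fininfram} and with Farah's treatment of the parametrized Ellentuck theorem (see \cite{Fa}, Lemma~2.5). The essential new ingredient is that we must simultaneously thin out the perfect set $P$ in the $2^\infty$-coordinate and condense the block sequence $A$ in the $\finkinf$-coordinate, and the bookkeeping between the two must respect the constraint $\depth_B(a)\le|u|$.

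First I would set up the iteration. For each $n$, consider the (finitely many, since $T_P\!\upharpoonright\! n$ is finite and, by \textbf{A.2.2}, $\{a: a\leq_{fin} r_n(A)\}$ is finite) pairs $(u,a)$ with $|u|=n$ and $\depth_A(a)\le n$. For a fixed such pair, the set $\mathcal D_{(u,a)}$ of all $(S,B)\in\mathbb P\times(\mathcal H\cap[\depth_A(a),A])$ with $S\subseteq Q(u)$ such that $(S,B)$ decides $(u,a)$ is dense open in the appropriate localized poset: density is exactly part~(c) of Lemma~\ref{lem1}, and openness (heredity downward) is part~(a)–(b) of Lemma~\ref{lem1}. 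Then I would invoke Lemma~\ref{dense prod fusion} to obtain a single dense open set $\mathcal D_\infty$ capturing ``decides $(u,a)$ for all $u\in T_P\!\upharpoonright\! n$ and all $a$ with $\depth_A(a)\le n$, simultaneously for every $n$'', and pull out one pair $(Q,B)$ lying in it, where $Q\subseteq P$ is perfect and $B\le A$, $B\in\mathcal H$. Semiselectivity of $\mathcal H$ is what makes the $\finkinf$-coordinate of the fusion go through; the perfect-set coordinate is handled by the usual fusion of perfect trees as in \cite{Fa,Paw}.

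The point where care is needed — and what I expect to be the main obstacle — is reconciling the ``finite splitting'' structure of $T_Q$ with the depth function: when we pass from level $n$ to level $n+1$ in the perfect tree we introduce new nodes $u*i$, and we must guarantee that the block sequence $B$ chosen at stage $n$ still decides the pairs $(u*i,b)$ with $\depth_B(b)\le n+1$ that become relevant only at the next stage. This is precisely the role of parts (d), (e), (f) of Lemma~\ref{lem1}: part (f) says acceptance/rejection is insensitive to lengthening $u$ along $T_Q$, and parts (d)–(e) let us pass from $a$ to its one-step extensions $b\in r_{|a|+1}[a,B]$ while staying inside $\mathcal H$. So the fusion schedule should, at stage $n$, use \textbf{A.4 mod $\mathcal H$} (via Lemma~\ref{lem1}(e)) to arrange that no new rejection turns into an accidental acceptance, and then close under \ref{dense prod fusion} to record the decision for every newly relevant $(u,a)$.

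Concretely, I would argue as follows. Given $P$ and $A$, apply Lemma~\ref{dense prod fusion} to the family $\{\mathcal D_{(u,a)} : (u,a)\in 2^{<\infty}\times\finkfin\upharpoonright A,\ \depth_A(a)\le|u|\}$ reorganized as a sequence $(\mathcal D_n)_{n\in\mathbb N}$ of dense open subsets of $\mathbb P\times\mathcal H$, where $\mathcal D_n$ requires the decision of all pairs $(u,a)$ with $|u|\le n$ and $\depth_A(a)\le n$. Each $\mathcal D_n$ is dense open: density uses Lemma~\ref{lem1}(c) finitely many times together with Lemma~\ref{lem1}(a)–(b) to keep earlier decisions intact, and openness is immediate from (a)–(b). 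By Lemma~\ref{dense prod fusion}, $\mathcal D_\infty$ is dense open in $\mathbb P\times\mathcal H$, so there is $(Q,B)\in\mathcal D_\infty$ with $Q\subseteq P$ perfect and $B\le A$, $B\in\mathcal H$. By construction $(Q,B)$ decides $(u,a)$ for every $(u,a)\in 2^{<\infty}\times\finkfin\upharpoonright B$ with $\depth_B(a)\le|u|$, using Lemma~\ref{lem1}(b) to transfer decisions from $[\depth_A(a),A]$-conditions down to $B$, and Lemma~\ref{lem1}(f) to handle the nodes of $T_Q$ at level exactly $|u|$. This yields the required $(Q,B)$ and completes the proof.
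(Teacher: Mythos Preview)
Your approach is essentially the same as the paper's: define dense open sets $\mathcal D_n\subseteq\mathcal S=\{(Q,B)\in\mathbb P\times\mathcal H:Q\subseteq P,\ B\le A\}$ encoding ``$(Q,B)$ decides all relevant pairs at level $n$'', invoke Lemma~\ref{dense prod fusion} to obtain $(Q,B)\in\mathcal D_\infty$, and conclude. The paper's proof is exactly this, in three lines.

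Two small points. First, your middle paragraph about parts (d), (e), (f) of Lemma~\ref{lem1} is misplaced: those clauses are not needed for the present lemma at all---they drive the proof of Lemma~\ref{lem3}, where one must propagate \emph{rejection} to one-step extensions. Here only (a), (b), (c) are used (for density and openness of $\mathcal D_n$), exactly as you say in your final paragraph. Second, your $\mathcal D_n$ is phrased in terms of $\depth_A(a)\le n$, with $A$ fixed; the paper instead uses $\depth_B(a)\le n$ with $B$ the variable in the pair $(Q,B)$. Since $B\le A$ implies $\depth_A(a)\ge\depth_B(a)$, your condition decides \emph{fewer} pairs than required, so the passage from $(Q,B)\in\mathcal D_\infty$ to the conclusion ``$(Q,B)$ decides every $(u,a)$ with $\depth_B(a)\le|u|$'' needs the paper's formulation to go through directly. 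This is a slip in bookkeeping rather than a different method; once $\mathcal D_n$ is stated with $\depth_B$, density still follows from finitely many applications of Lemma~\ref{lem1}(c) and your argument is complete.
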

\begin{proof}

Let \[\mathcal D_n= \{(Q, B)\in\mathbb P\times\mathcal H :  Q\subseteq P, B\leq A\mbox{  and } \] \[ (\forall (u,a)\in T_Q\times  \finkfin\upharpoonright B \mbox{ with } \depth_B(b)\leq |u|=n)\; (Q, B) \mbox{ decides } (u,a)\}\] 

\medskip

Let $\mathcal S = \{(Q, B)\in\mathbb P\times\mathcal H : Q\subseteq P, B\leq A\}$. Then $\mathcal D_n$ is dense open in $\mathcal S$. Let $\mathcal D_{\infty}$ be as in  Lemma \ref{dense prod fusion}  and choose $(Q, B)\in\mathcal D_{\infty}$. Then   $Q$ and $B$ are as required.
\end{proof}

\vspace{.25 cm}

\begin{lem}\label{lem3}
Let $Q$ and $B$ be as in Lemma \ref{lem2}. Suppose $(Q,B)$ rejects
$(<>,\emptyset)$. Then there exists $D\leq B$ in $\mathcal H$ such that $(Q,D)$
rejects $(u,b)$, for every $(u,b)\in 2^{<\infty}\times
 \finkfin\upharpoonright D$ with $\depth_{D}(b)\leq |u|$.
\end{lem}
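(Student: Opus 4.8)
The plan is to run a diagonalization / fusion argument over the tree $T_Q$, exactly parallel to the "rejection is hereditary on a further refinement" step that appears in Galvin's lemma (cf. Lemma~\ref{galvinlocal} and part~(e) of Lemma~\ref{lem1}), but now carried out simultaneously for all nodes $u\in T_Q$. Fix $Q,B$ as in Lemma~\ref{lem2}, so that $(Q,B)$ decides every relevant pair, and assume $(Q,B)$ rejects $(\langle\rangle,\emptyset)$. First I would observe, using Lemma~\ref{lem1}(f), that rejection of $(\langle\rangle,\emptyset)$ by $(Q,B)$ already gives rejection of $(u,\emptyset)$ for every $u\in T_Q$, and then, by Lemma~\ref{lem1}(e), for every $u$ and every $a$ that $(Q,B)$ rejects, there is a condensation that kills acceptance of all one-step extensions $b\in r_{|a|+1}[a,B]$. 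The goal is to fuse all these local refinements into a single $D\leq B$ in $\mathcal H$ such that $(Q,D)$ rejects every $(u,b)$ with $b\in\finkfin\upharpoonright D$ and $\depth_D(b)\le|u|$.

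The key steps, in order, are: (1) For each $a\in\finkfin\upharpoonright B$ define
\[
\mathcal D_a=\{C\in\mathcal H\cap[\depth_B(a),B]\colon (Q,C)\text{ rejects every }b\in r_{|a|+1}([a,C])\text{ whenever }(Q,B)\text{ rejects }a\},
\]
and $\mathcal D_a=\mathcal H\cap[\depth_B(a),B]$ otherwise; check that each $\mathcal D_a$ is dense open in $\mathcal H\cap[\depth_B(a),B]$ using Lemma~\ref{lem1}(b) (rejection passes to $[a,A]\cap\mathcal H$-condensations, giving openness) together with Lemma~\ref{lem1}(e) (giving density). (2) Apply semiselectivity of $\mathcal H$ to the collection $(\mathcal D_a)_{a\in\finkfin\upharpoonright B}$ to obtain $D\in\mathcal H\upharpoonright B$ that diagonalizes it, i.e.\ for each $a\in\finkfin\upharpoonright D$ there is $C_a\in\mathcal D_a$ with $[a,D]\subseteq[a,C_a]$. (3) Conclude: given $(u,b)$ with $b\in\finkfin\upharpoonright D$, $\depth_D(b)\le|u|$, write $b$ as a one-step extension of some $a\sqsubseteq b$; since $(Q,B)$ (hence, by Lemma~\ref{lem1}(f), $(Q,D)$ at the node $u$) still rejects $a$, we have $C_a\in\mathcal D_a$ rejecting $b$, and since $[b,D]\subseteq[b,C_a]$, any witness $\widehat D\in\mathcal H\cap[b,D]$ lies in $\mathcal H\cap[b,C_a]$ and therefore cannot accept $(u,b)$; hence $(Q,D)$ rejects $(u,b)$. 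One must also handle the base case $b=\emptyset$ separately (directly from the hypothesis and Lemma~\ref{lem1}(f)) and propagate rejection down the finitely many intermediate extensions, exactly as in the proof of Lemma~\ref{galvinlocal}.

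The main obstacle I anticipate is bookkeeping the interaction between the perfect-tree coordinate $u$ and the depth constraint $\depth_D(b)\le|u|$: one needs the diagonalizing $D$ to work uniformly across all $u\in T_Q$ simultaneously, which is why the indexing family $(\mathcal D_a)_a$ must be defined purely in terms of $a$ (quantifying over all $u$ internally via "$(Q,C)$ rejects $(u,b)$ for all $u$" — note rejection at one node plus Lemma~\ref{lem1}(f) already yields rejection at all comparable nodes, so it suffices to phrase the definition with $(Q,C)$ rejecting $b$ in the forcing sense and let part~(f) do the work). Once the $\mathcal D_a$'s are correctly seen to be dense open in $\mathcal H\cap[\depth_B(a),B]$, the argument is a verbatim adaptation of Lemma~\ref{galvinlocal}, and the perfect-set coordinate is carried along passively (only $Q$, never a shrinking of it, appears), so no additional fusion in the $2^\infty$ direction is needed here — that was already absorbed into Lemma~\ref{lem2}.
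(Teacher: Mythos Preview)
Your approach is essentially the same as the paper's --- diagonalize a family of dense open sets in $\mathcal H\upharpoonright B$ using semiselectivity --- but the paper organizes the argument differently, and that reorganization is not cosmetic: it absorbs exactly the bookkeeping you flag as an obstacle.

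The paper indexes the dense sets by \emph{level} $n$ rather than by $a$, setting
\[
\mathcal D_n=\{C\in\mathcal H\upharpoonright B:(Q,C)\text{ rejects every }(u,b)\in 2^n\times\finkfin\upharpoonright C\text{ with }\depth_C(b)=n\},
\]
and proves density of each $\mathcal D_n$ by \emph{induction on $n$}: from $C_n\in\mathcal D_n$ one lists the finitely many $u\in 2^{n+1}$ and the finitely many $b$ of depth $n$ in $C_n$, uses Lemma~\ref{lem1}(f) to get rejection at the longer $u$'s, then iterates Lemma~\ref{lem1}(e) finitely many times to reach $C_{n+1}\in\mathcal D_{n+1}$. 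The point is that the inductive hypothesis (membership in $\mathcal D_n$) is precisely what is needed to start the next step; there is no appeal to a global ``$(Q,B)$ rejects $a$''.

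Your $a$-indexed scheme, by contrast, conditions $\mathcal D_a$ on ``$(Q,B)$ rejects $a$'', a statement you only know for $a=\emptyset$ at the outset. For the diagonalization to be useful at a general $a\in\finkfin\upharpoonright D$, you need $\mathcal D_a$ to have been nontrivial, i.e.\ you need $(Q,B)$ to reject $(u,a)$ for the relevant $u$'s --- but that is exactly what you are trying to prove. In Lemma~\ref{galvinlocal} this circularity is broken because the deciding element $D$ decides \emph{every} $a$, so the induction in the final paragraph goes through: $C_a\leq D$ rejects the one-step extension $b$, hence $D$ does not accept $b$, hence $D$ rejects $b$. Here, however, $(Q,B)$ only decides $(u,b)$ when $\depth_B(b)\le|u|$, and for $b$ a one-step extension of $a$ inside some $C\leq B$ the quantity $\depth_B(b)$ can be arbitrarily large, so the ``decide $\Rightarrow$ reject'' step is unavailable at the level $|u|$ you are working with. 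Your final paragraph correctly identifies this as the main obstacle but does not actually resolve it; the fix is precisely the level-by-level induction the paper carries out, which keeps $\depth_C(b)$ and $|u|$ aligned throughout.

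In short: your plan is on the right track and would converge to the paper's proof once you unfold the induction, but as written the density of your $\mathcal D_a$ and the propagation in step~(3) both lean on a rejection hypothesis for $(Q,B)$ at general $a$ that has not been established.
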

\begin{proof}

For $n\in\mathbb N$, let \[\mathcal D_n = \{C\in\mathcal H\upharpoonright B : (Q,C) \mbox{ rejects every } (u,b)\in 2^{n}\times  \finkfin\upharpoonright C\mbox{ with } \depth_{C}(b) = n\}.\]


\begin{clm}
Every $\mathcal D_n$ is dense open in $\mathcal H\upharpoonright B$.
\end{clm}
\begin{proof}[Proof of Claim]
 By induction on $n\geq 1$.  Case $n=1$: Let $D\in\mathcal H\upharpoonright B$ be given. Note that $(Q, D)$ rejects  $(<>,\emptyset)$, by Lemma \ref{lem1} (b). Therefore, by parts (b), (e) and (f) of Lemma \ref{lem1}, by the choice of $(Q,B)$, and by the fact that $|b|\leq \depth_{D}(b)$, for every $b\in \finkfin\upharpoonright D$,  we can find $C_1\in\mathcal D_1$ with $C_1\leq D$. On the other hand, obviously, if $D\in\mathcal D_1$ and $C\leq D$ then, $C\in\mathcal D_1$. That is, $\mathcal D_1$ is dense open in $\mathcal H\upharpoonright B$. 
 
 Now suppose $\mathcal D_n$ is dense open in $\mathcal H\upharpoonright B$ and, again, let $D\in\mathcal H\upharpoonright B$ be given. Choose $C_n\in\mathcal D_n$ with $C_n\leq D$. Let $u_{0}$, $u_{1}$,... , $u_{2^{n+1}-1}$ be a list of the
elements of $2^{n+1}$; and let $b_{0}$, $b_{1}$,... , $b_{m}$ be a
list of the $b\in  \finkfin\upharpoonright C_{n}$ such that $\depth_{C_{n}}(b)
= n$. By Lemma \ref{lem1}(f), $(Q,C_{n})$ rejects $(u_{i},b_{j})$
for every $(i,j)\in\{0, 1,\cdots ,  2^{n+1}-1\}\times \{0,
1,\cdots , m\}$. Now, by Lemma \ref{lem1}(e) there exists
$C_{n}^{0,0}\in [n, C_{n}]\cap\mathcal H$ such that $(Q,C_{n}^{0,0})$ rejects
$(u_{0},b)$ for every $b\in r_{|b_{0}|+1}[b_{0},C_{n}^{0,0}]$.

In the same way, for every $(i,j)\in\{0, 1,\cdots ,
2^{n+1}-1\}\times \{0, 1,\cdots , m\}$,   we can find
$C_{n}^{i,j}$ satisfying the following:

\begin{enumerate} 
\item $C_{n}^{i,j+1}\in [n,C_{n}^{i,j}]\cap\mathcal H$,
\item $C_{n}^{i+1,0}\in [n,C_{n}^{i,m}]\cap\mathcal H$,  and 
\item $(Q,C_{n}^{i,j})$ rejects $(u_{i},b)$ for every $b\in
r_{|b_{j}|+1}[b_{j},C_{n}^{i,j}]$. 
\end{enumerate}

 Let $C_{n+1} = C_{n}^{2^{n+1}-1,m}$. Notice that $C_{n+1}\in \mathcal D_{n+1}$  and $C_{n+1}\leq D$. And, obviously, as in in the case $n=1$, if $D\in\mathcal D_{n+1}$ and $C\leq D$ then, $C\in\mathcal D_{n+1}$. Thus, $\mathcal D_{n+1}$ is dense open in $\mathcal H\upharpoonright B$. This completes the induction argument and the proof of the Claim.
\end{proof}

Then, by semiselectivity, there exists a diagonalization $D\in\mathcal H\upharpoonright B$ of the sequence $(\mathcal D_n)_n$. It is easy to see that $D$ is as required.

\end{proof}

The next theorem is inspired by Theorem 2.3 of \cite{Fa} and Theorem 3 of \cite{Mija}. 

\medskip

\begin{thm}\label{thm2}
Let $\mathcal H\subseteq\finkinf$ be a semiselective coideal. For every $\mathcal{F}\subseteq 2^{<\infty}\times \FIN_k^{<\infty}$, perfect $P\subseteq 2^{\infty}$ and $A\in\mathcal{H}$ there exist
a perfect $S\subseteq P$ and $D\leq A$ in $\mathcal H$ such that one of the
following holds:

\begin{itemize}
\item[{(a)}]for every $x\in S$ and every $C\leq D$ in $\mathcal H$ there exist
integers $l$ and $m > 0$ such that $(x|_{l},r_{m}(C))\in
\mathcal{F}$. 
\item[{(b)}]$(T_{S}\times
 \finkfin\upharpoonright D)\cap\mathcal{F} = \emptyset$.
\end{itemize}

\end{thm}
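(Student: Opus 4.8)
The plan is to combine the combinatorial forcing machinery developed in Lemmas \ref{lem1}--\ref{lem3} with the Galvin-type argument from Lemma \ref{galvinlocal}, adapted to the product space $\mathbb P\times\mathcal H$. First I would apply Lemma \ref{lem2} (to Combinatorial Forcing 1 attached to the given $\mathcal F$) to obtain a perfect $Q\subseteq P$ and $B\leq A$ in $\mathcal H$ such that $(Q,B)$ decides $(u,a)$ for every $(u,a)\in 2^{<\infty}\times\finkfin\upharpoonright B$ with $\depth_B(a)\leq|u|$. Then I split into two cases according to whether $(Q,B)$ accepts or rejects $(\langle\rangle,\emptyset)$.

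If $(Q,B)$ accepts $(\langle\rangle,\emptyset)$, I claim alternative (a) holds with $S=Q$ and $D=B$. Indeed, by Lemma \ref{lem1}(a)--(b), acceptance of $(\langle\rangle,\emptyset)$ means that for every $x\in Q$ and every $C\in[\emptyset,B]\cap\mathcal H$ there are integers $l,m$ with $(x|_l,r_m(C))\in\mathcal F$; and since $C\leq D$ in $\mathcal H$ with $[\emptyset,C]\neq\emptyset$ gives such a $C$, and $m$ can be taken positive because $r_0(C)=\emptyset$ plays no role, this is exactly (a). If instead $(Q,B)$ rejects $(\langle\rangle,\emptyset)$, I invoke Lemma \ref{lem3} to get $D\leq B$ in $\mathcal H$ such that $(Q,D)$ rejects every $(u,b)\in 2^{<\infty}\times\finkfin\upharpoonright D$ with $\depth_D(b)\leq|u|$. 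From this I want to extract a perfect $S\subseteq Q$ with $(T_S\times\finkfin\upharpoonright D)\cap\mathcal F=\emptyset$, which is alternative (b).

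The extraction of $S$ is the step I expect to be the main obstacle, since rejection is only a non-acceptance statement for \emph{all} perfect subsets and all $C\in[b,D]\cap\mathcal H$, whereas (b) demands that no single pair $(u,b)$ with $u\in T_S$ and $b\in\finkfin\upharpoonright D$ literally lies in $\mathcal F$. The idea is to thin $Q$ to a perfect subtree $S$ by a fusion argument: enumerate the relevant pairs $(u,b)$, and at each stage, given a finite approximation to $T_S$, use the definition of rejection together with Lemma \ref{lem1}(f) (which propagates rejection to all $\sqsubseteq$-extensions of $u$) to shrink the perfect set above each current node so that the offending first coordinates $x|_l$ do not land in the $\mathcal F$-fibre over $b$; here one uses that if some $x\in S(u)$ gave $(x|_l,b)\in\mathcal F$ with $|u|\geq l$, then taking any perfect $S'\subseteq S(u)$ through that $x$ and any $C\in[b,D]\cap\mathcal H$ would make $(S',C)$ accept $(u,b)$, contradicting rejection. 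Running the fusion through a bookkeeping of all $(u,b)$ yields the desired perfect $S\subseteq Q$; this mirrors the passage from Lemma \ref{galvinlocal} to the conclusion of Lemma \ref{openram}, now parametrized over $2^{<\infty}$, and is where the perfect-tree combinatorics of Section \ref{block}'s preamble (the operation $u\mapsto u(Q)$) does the real work.
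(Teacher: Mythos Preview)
Your proposal is correct and follows the paper's approach through the application of Lemmas \ref{lem2} and \ref{lem3} and the accept/reject split on $(\langle\rangle,\emptyset)$. However, you significantly overcomplicate the reject case: what you call ``the main obstacle'' is not an obstacle at all, and the fusion you propose is unnecessary. The paper simply takes $S=Q$.

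The contradiction you sketch at the end is in fact the paper's entire argument for alternative (b), and it works directly for $Q$ without any thinning. Suppose $(t,b)\in (T_Q\times\finkfin\upharpoonright D)\cap\mathcal F$. Choose $u_t\in 2^{<\infty}$ with $Q(u_t)\subseteq Q\cap[t]$. Then every $x\in Q(u_t)$ satisfies $x|_{|t|}=t$, and every $C\in[b,D]\cap\mathcal H$ has $r_m(C)=b$ for $m=|b|$; hence $(x|_{|t|},r_m(C))=(t,b)\in\mathcal F$. This means $(Q,D)$ accepts $(u_t,b)$, and by Lemma \ref{lem1}(f) it accepts $(v,b)$ for any $v\sqsupseteq u_t$ with $|v|\geq\depth_D(b)$, contradicting the conclusion of Lemma \ref{lem3}. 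So $(T_Q\times\finkfin\upharpoonright D)\cap\mathcal F=\emptyset$ already, and there is nothing to fuse away. Your bookkeeping would run, but every stage would be vacuous; you have the right idea embedded in your last paragraph without noticing that it finishes the proof on its own.
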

\begin{proof}
Given $\mathcal{F}\subseteq 2^{<\infty}\times \FIN_k^{<\infty}$, perfect
$P\subseteq 2^{\infty}$ and $A\in\mathcal H$, consider the
combinatorial forcing 1. Let $Q\subseteq P$ and $B\leq A$ be as in
Lemma \ref{lem2}. If $(Q,B)$ accepts $(<>,\emptyset)$ then part
(a) of Theorem \ref{thm2} holds by the definition of ``accepts". So
suppose $(Q,B)$ does not accept (and hence, rejects)
$(<>,\emptyset)$. By Lemma \ref{lem3}, find $D\leq B$ in $\mathcal H$ such that
$(Q,D)$ rejects $(u,b)$, for every $(u,b)\in
2^{<\infty}\times  \finkfin\upharpoonright D$ with $\depth_{D}(b)\leq |u|$.
Suppose towards a contradiction that there exist $(t,b)$ in
$(T_{Q}\times  \finkfin\upharpoonright D)\cap\mathcal{F}$. Find $u_{t}\in
2^{<\infty}$ such that $Q(u_{t})\subseteq Q\cap [t]$. Then $(Q,D)$
accepts $(u_{t},b)$: for $x\in Q(u_{t})$ and $C\in [b,D]$, let $l
= |t|$ and $m$ be such that $r_{m}(C) = b$. Then
$(x|_{l},r_{m}(C)) = (t,b)\in \mathcal{F}$. But then, by Lemma \ref{lem1} (f), $(Q,D)$ accepts $(v,b)$, for
every $v\in 2^{<\infty}$ such that $u_{t}\sqsubseteq v$ and
$|v|\geq \depth_{D}(b)$. This is a contradiction with the choice of
$D$. Therefore, for $S = Q$ and $D$ part (b) of Theorem \ref{thm2}
holds.
\end{proof}

\vspace{.5 cm}

Now we are ready to prove the main result of this section:

\begin{proof}[Proof of Theorem \ref{thm1}]

\vspace{.25 cm}

\textbf{(a)} The implication from left to right is obvious. So suppose $\mathcal{X}\subseteq 2^{\infty}\times \finkinf$ has the $\mathbb{P}\times Exp(\mathcal{H})$-Baire Property, and let $P\times [a,A]$ be given, with $A\in\mathcal H$.  In order to make the proof notationally simpler, we will assume $a = \emptyset$ without a loss of generality.

\vspace{.25 cm}

\begin{claim} \label{claim2}Given $\hat{\mathcal{X}}\subseteq 2^{\infty}\times
\finkinf$, perfect $\hat{P}\subseteq 2^{\infty}$ and
$\hat{A}\in\mathcal H$, there exist $Q\subseteq \hat{P}$ and
$B\leq \hat{A}$ in $\mathcal H$ such that for each $(u,b)\in
2^{<\infty}\times  \finkfin\upharpoonright B$ with $|u|\geq \depth_{B}(b)$ one
of the following holds:
\begin{itemize}
\item[i.)]$Q(u)\times [b,B]\subseteq\hat{\mathcal{X}}$
\item[ii.)]$R\times [b,C]\not\subseteq\hat{\mathcal{X}}$, for
every $R\subseteq Q(u)$ and every $C\leq B$ compatible with $b$.
\end{itemize}
\end{claim}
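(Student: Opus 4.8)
The statement to prove is Claim~\ref{claim2}, which is the workhorse behind part (a) of Theorem~\ref{thm1}. The plan is to run Combinatorial Forcing~2 (the one attached to the set $\hat{\mathcal X}$ via the notion ``$(Q,A)$ accepts $(u,a)$ iff $Q(u)\times[a,A]\subseteq\hat{\mathcal X}$''), and to mirror the proof of Theorem~\ref{thm2}, but now keeping track of the decision alternative locally at each pair $(u,b)$ instead of globally. First I would invoke Lemma~\ref{lem2} for the set $\hat{\mathcal X}$, $\hat P$ and $\hat A$, obtaining a perfect $Q_0\subseteq\hat P$ and $B_0\leq\hat A$ in $\mathcal H$ such that $(Q_0,B_0)$ decides $(u,a)$ for every $(u,a)\in 2^{<\infty}\times\finkfin\upharpoonright B_0$ with $\depth_{B_0}(a)\leq|u|$. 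This gives the dichotomy ``accepts or rejects'' at every relevant pair.

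Next, the key point is to convert ``rejects $(u,b)$'' into the permanent, hereditary statement (ii). Recall that by definition $(Q_0,B_0)$ rejects $(u,b)$ means precisely that no $(S,C)$ with $S\subseteq Q_0(u)$ and $C\in[b,B_0]\cap\mathcal H$ accepts $(u,b)$, i.e. $S\times[b,C]\not\subseteq\hat{\mathcal X}$ for all such $S,C$. So (ii) is essentially the rejection clause, except that it must be \emph{stable} as we pass to a single common condensation $B\leq B_0$; this is exactly the role played by Lemma~\ref{lem3}. Concretely, I would apply Lemma~\ref{lem3} (which, as noted, holds for Combinatorial Forcing~2 as well): assuming $(Q_0,B_0)$ rejects $(<>,\emptyset)$ one gets $D\leq B_0$ in $\mathcal H$ such that $(Q_0,D)$ rejects every $(u,b)\in 2^{<\infty}\times\finkfin\upharpoonright D$ with $\depth_D(b)\leq|u|$, and then by Lemma~\ref{lem1}(a),(b) this rejection is inherited by all $(R,C)$ below, which is (ii). In the complementary case, if $(Q_0,D)$ accepts $(u,b)$ for some pair, I would not immediately get (i) everywhere; rather I would split according to which of the two Lemma~\ref{lem2} outcomes holds at each $(u,b)$ and use Lemma~\ref{lem1}(f) to propagate along $\sqsubseteq$-extensions of $u$, and Lemma~\ref{lem1}(d) to propagate along $r_{|b|+1}$-extensions of $b$, so that the ``accepts'' alternative, once it occurs, spreads to a whole subtree. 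The pair $(Q,B):=(Q_0,D)$ then witnesses the claim: at every $(u,b)$ with $|u|\geq\depth_B(b)$, either $(Q,B)$ accepts $(u,b)$, which is (i), or it rejects $(u,b)$, which by heredity is (ii).

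The main obstacle I anticipate is bookkeeping the two parameters simultaneously: unlike Theorem~\ref{thm2}, where a \emph{single} global dichotomy (a) vs.\ (b) suffices, here each $(u,b)$ needs its own verdict and these verdicts must be made coherent along a common $B$ and a common perfect $Q$. This is handled by the density/fusion machinery: the sets of conditions that ``decide all pairs of a given level'' are dense open in the product poset $\mathbb P\times\mathcal H$ (this is where Lemma~\ref{dense prod fusion} enters, via $\mathcal D_\infty$), and then semiselectivity of $\mathcal H$ yields a diagonalization $B$ absorbing all levels at once, exactly as in Lemma~\ref{lem3}. I expect the argument to be a careful but essentially routine amalgamation of Lemmas~\ref{lem1}, \ref{lem2}, \ref{lem3} and \ref{dense prod fusion}, with the only genuinely delicate step being the verification that, after the diagonalization, the locally-decided verdict at $(u,b)$ is not disturbed when one later restricts $Q$ further inside $Q(u)$ — which is guaranteed by the heredity clauses Lemma~\ref{lem1}(a),(b),(f) for ``rejects'' and by the definition of ``accepts'' (which only involves $Q(u)$ and $[b,B]$, hence is preserved downward) for ``accepts''.
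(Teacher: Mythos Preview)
Your first step is already the whole proof. The paper's argument is literally one line: ``Consider the Combinatorial Forcing 2 and apply Lemma~\ref{lem2}.'' Once Lemma~\ref{lem2} hands you $(Q,B)$ deciding every $(u,b)$ with $\depth_B(b)\leq|u|$, you are done: for Combinatorial Forcing~2, ``$(Q,B)$ accepts $(u,b)$'' is \emph{verbatim} condition (i), and ``$(Q,B)$ rejects $(u,b)$'' is \emph{verbatim} condition (ii) (rejection says no perfect $S\subseteq Q(u)$ and $C\in[b,B]\cap\mathcal H$ satisfy $S(u)\times[b,C]\subseteq\hat{\mathcal X}$; since $S(u)\subseteq S$, this forces $S\times[b,C]\not\subseteq\hat{\mathcal X}$). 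No further condensation is needed, and the ``stability'' worry you raise does not arise: the $B$ produced by Lemma~\ref{lem2} is the final $B$.

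Everything from your second paragraph onward is superfluous, and the appeal to Lemma~\ref{lem3} is misplaced. Lemma~\ref{lem3} carries the hypothesis that $(Q,B)$ rejects $(\langle\,\rangle,\emptyset)$ and then produces a $D$ for which \emph{every} pair is rejected; that is a global statement, not the local per-pair dichotomy Claim~\ref{claim2} asks for. Your attempt to patch the ``complementary case'' (where some pair is accepted) then loops back to exactly what Lemma~\ref{lem2} already gave you. So the plan is correct but over-engineered: drop Lemma~\ref{lem3}, drop the propagation discussion, and simply observe that the accept/reject alternative output by Lemma~\ref{lem2} \emph{is} the (i)/(ii) alternative.
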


\begin{proof}[Proof of Claim \ref{claim2}]

\vspace{.25 cm}

Consider the Combinatorial Forcing 2 and apply Lemma \ref{lem2}.

\end{proof}

\vspace{.25 cm}

Apply the Claim \ref{claim2}  to $\mathcal{X}$, $P$ and $A$ to find
$Q_{1}\subseteq P$ and $B_{1}\leq A$ in $\mathcal H$ such that for each $(u,b)\in
2^{<\infty}\times  \finkfin\upharpoonright B_{1}$ with $|u|\geq
\depth_{B_{1}}(b)$ one of the following holds:

\begin{itemize}
\item[(1)]$Q_{1}(u)\times [b,B_{1}]\subseteq\mathcal{X}$ or
\item[(2)]$R\times [b,C]\not\subseteq\mathcal{X}$, for every
$R\subseteq Q_{1}(u)$ and every $C\leq B_{1}$ compatible with $b$.
\end{itemize}

\vspace{.25 cm}

For each $t\in T_{Q_{1}}$, choose $u_{1}^{t}\in 2^{<\infty}$ such
that $u_{1}^{t}(Q_{1})\sqsubseteq t$. \vspace{.25 cm}

Let $$\mathcal{F}_{1} = \{(t,b)\in
T_{Q_{1}}\times  \finkfin\upharpoonright B_{1} : Q_{1}(u_{1}^{t})\times
[b,B_{1}]\subseteq\mathcal{X}\}$$ Now, pick $S_{1}\subseteq Q_{1}$
and $D_{1}\leq B_{1}$   in $\mathcal H$ satisfying Theorem \ref{thm2}. If (a) of
Theorem \ref{thm2} holds then $S_{1}\times [0,D_{1}]\subseteq \mathcal{X}$ and
we are done. So suppose (b) holds. Apply  the Claim \ref{claim2} to $\mathcal{X}^{c}$, $S_{1}$ and $D_{1}$ to find
$Q_{2}\subseteq S_{1}$ and $B_{2}\leq D_{1}$  in $\mathcal H$ such that for each
$(u,b)\in 2^{<\infty}\times  \finkfin\upharpoonright B_{2}$ with $|u|\geq
\depth_{D_{2}}(b)$ one of the following holds:

\begin{itemize}
\item[(3)]$Q_{2}(u)\times [b,B_{2}]\subseteq\mathcal{X}^{c}$ or
\item[(4)]$R\times [b,C]\not\subseteq\mathcal{X}^{c}$, for every
$R\subseteq Q_{2}(u)$ and every $C\leq B_{2}$ compatible with $b$.
\end{itemize}

\vspace{.25 cm}

As before, for each $t\in T_{Q_{2}}$, choose $u_{2}^{t}\in
2^{<\infty}$ such that $u_{2}^{t}(Q_{2})\sqsubseteq t$.

\vspace{.25 cm}

Let $$\mathcal{F}_{2} = \{(t,b)\in
T_{Q_{2}}\times  \finkfin\upharpoonright B_{2} : Q_{2}(u_{2}^{t})\times
[b,B_{2}]\subseteq\mathcal{X}^{c}\}$$ Again, pick $S_{2}\subseteq
Q_{2}$ and $D_{2}\leq B_{2}$  in $\mathcal H$ satisfying Theorem \ref{thm2}. If (a)
of Theorem \ref{thm2} holds then $S_{2}\times [0,D_{2}]\cap\mathcal{X} =
\emptyset$ and we are done. So suppose (b) holds again. Let us see
that this contradicts the fact that $\mathcal{X}$ has the
$\mathbb{P}\times Exp(\mathcal{H})$-Baire Property:

Note that for every $(t,b)\in T_{S_{2}}\times  \finkfin\upharpoonright D_{2}$ the following holds:

\vspace{.25 cm}

\begin{itemize}
\item[(i)]$Q_{1}(u_{1}^{t})\times [b,B_{1}]\not\subseteq\mathcal{X}$, and
\item[(ii)]$Q_{2}(u_{2}^{t})\times [b,B_{2}]\not\subseteq\mathcal{X}^{c}$.
\end{itemize}

\vspace{.25 cm}

So, suppose there is a nonempty $R\times [b,C]\subseteq
S_{2}\times [\emptyset,D_{2}]\cap\mathcal{X}$, with  $C\in\mathcal H$, and pick $t\in
T_{R}$ with $\mid u_{1}^{t}\mid \ \ \geq \ \ \depth_{B_{1}}(b)$.
Note that $R\cap [t]\subseteq Q_{1}(u_{1}^{t})$. On the one hand
we have that $R\cap [t]\times [b,C]\subseteq R\times
[b,C]\subseteq\mathcal{X}$. But in virtue of (i),
$Q_{1}(u_{1}^{t})\times [b,B_{1}]\not\subseteq\mathcal{X}$ and
hence by (2) above we have that $R\cap [t]\times
[b,C]\not\subseteq\mathcal{X}$. If we suppose that there is a
nonempty $R\times [b,C]\subseteq S_{2}\times
[\emptyset,D_{2}]\cap\mathcal{X}^{c}$, with $C\in\mathcal H$, we reach to a similar
contradiction in virtue of (ii) and (4) above. So there is neither
$R\times [b,C]\subseteq S_{2}\times
[\emptyset,D_{2}]\cap\mathcal{X}$ nor $R\times [b,C]\subseteq
S_{2}\times [\emptyset,D_{2}]\cap\mathcal{X}^{c}$. But this is
impossible because $\mathcal{X}$ has the $\mathbb{P}\times
Exp(\mathcal{H})$-Baire Property.

\vspace{.25 cm}

\textbf{(b)} Again, the implication from left to right is obvious.
Conversely, the result follows easily from part \textbf{(a)} and the fact that
$\mathcal{X}$ is $\mathbb{P}\times Exp(\mathcal{H})$-meager. This completes the proof of Theorem \ref{thm1}.

\end{proof}

 \subsection{Closedness Under the Souslin Operation.}
 
\begin{lem}\label{RamseyNull}
Let  $\mathcal H$ be a semiselective coideal in $\finkinf$. The perfectly
$\mathcal H$-Ramsey null subsets of $2^{\infty}\times\finkinf$ form a
$\sigma$-ideal.
\end{lem}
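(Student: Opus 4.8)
To show the perfectly $\mathcal H$-Ramsey null sets form a $\sigma$-ideal, the two things to check are that the family is closed under subsets (trivial, since the definition asks for a witness for every perfect $Q$ and every $[a,A]$ with $A\in\mathcal H$, and any such witness for $\mathcal X$ works for any $\mathcal X'\subseteq\mathcal X$) and that it is closed under countable unions. So fix a sequence $(\mathcal X_n)_{n\in\mathbb N}$ of perfectly $\mathcal H$-Ramsey null subsets of $2^\infty\times\finkinf$, put $\mathcal X=\bigcup_n\mathcal X_n$, and fix a perfect $Q\subseteq 2^\infty$ and a nonempty $[a,A]$ with $A\in\mathcal H$. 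I must produce a perfect $S\subseteq Q$ and $B\in[a,A]\cap\mathcal H$ with $S\times[a,B]\cap\mathcal X=\emptyset$.

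\textbf{Fusion construction.} The idea is a simultaneous fusion in both coordinates, exactly as in the proof of Theorem \ref{thm1}. For each $n$, using that $\mathcal X_n$ is perfectly $\mathcal H$-Ramsey null, I would build a decreasing (in the product order $(P,C)\mapsto P\times C$) sequence of conditions that ``kills'' $\mathcal X_n$ on a finite piece of the tree and a finite initial segment of the block sequence, and then diagonalize. Concretely, I would define, for each $n$, the set
\[
\mathcal D_n=\{(P,C)\in\mathbb P\times(\mathcal H\upharpoonright A): (\forall u\in T_P\upharpoonright n)(\forall b\in\finkfin\upharpoonright C\text{ with }\depth_A(b)\le n)\ P\cap[u]\times[b,C]\cap\mathcal X_n=\emptyset\}.
\]
Using the perfect $\mathcal H$-Ramsey nullity of $\mathcal X_n$ applied finitely many times (once for each pair $(u,b)$ in the relevant finite set, shrinking $P$ and $C$ step by step, exactly as in Lemma \ref{lem3}), together with part (a)/(b) and (c) of Lemma \ref{lem1}-style reasoning, one checks $\mathcal D_n$ is dense open in $\mathcal S:=\mathbb P\times(\mathcal H\upharpoonright A)$ (restricted below $Q$ in the first coordinate). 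Then Lemma \ref{dense prod fusion} (or a direct fusion argument in its spirit) yields a single condition $(S,B)$ with $S\subseteq Q$, $B\leq A$, $B\in\mathcal H$, lying in $\mathcal D_n$ for every $n$. Since $[a,A]$ is fixed I keep everything inside $[a,A]$ so that $B\in[a,A]\cap\mathcal H$.

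\textbf{Verifying the union is avoided.} Given such $(S,B)$, suppose toward a contradiction that $(x,Y)\in S\times[a,B]\cap\mathcal X$. Then $(x,Y)\in\mathcal X_n$ for some $n$. Pick $m\ge n$ large enough that $u:=x|_m\in T_S\upharpoonright m$ and $b:=r_m(Y)\in\finkfin\upharpoonright B$ has $\depth_A(b)\le m$; this is possible because $\depth_A$ is finite and both coordinates converge. Since $(S,B)\in\mathcal D_m$ and $x\in S\cap[u]$, $Y\in[b,B]$, we get $(x,Y)\in S\cap[u]\times[b,B]\cap\mathcal X_m=\emptyset$ (using $\mathcal X_m$ on the $m$-th level, or, if one prefers to have all $n$ simultaneously, arrange $\mathcal D_n$ to handle $\mathcal X_0,\dots,\mathcal X_n$ rather than just $\mathcal X_n$), a contradiction. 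Hence $S\times[a,B]\cap\mathcal X=\emptyset$, as required.

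\textbf{Main obstacle.} The routine points (closure under subsets, the final contradiction) are immediate; the real work is the density-and-fusion step. The subtlety is that the generating lemma here is not a ``Ramsey'' statement giving a homogeneous piece but a ``nullity'' statement, so the step of getting $\mathcal D_n$ dense open requires iterating the nullity hypothesis over the finitely many pairs $(u,b)$ with $|u|=n$, $\depth_A(b)\le n$ while keeping the shrinkings coherent — this is exactly the bookkeeping done in the Claim inside the proof of Lemma \ref{lem3}, and I would invoke Lemma \ref{dense prod fusion} to package the diagonalization. If one wants to avoid re-deriving Lemma \ref{dense prod fusion} in this setting, one can instead note that perfectly $\mathcal H$-Ramsey null is, by Theorem \ref{thm1}(b), the same as $\mathbb P\times Exp(\mathcal H)$-meager, and meager sets for any reasonable notion of category are closed under countable unions by the usual fusion — but since the fusion is the content either way, I would present the direct argument above.
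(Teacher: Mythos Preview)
Your proposal is correct and follows essentially the same route as the paper: define dense open sets $\mathcal D_n$ in $\mathbb P\times(\mathcal H\upharpoonright A)$, invoke Lemma~\ref{dense prod fusion} to fuse, and verify that the resulting $(S,B)$ avoids $\bigcup_n\mathcal X_n$. The paper streamlines two points you only gesture at: it first reduces to the case $\mathcal X_n\subseteq\mathcal X_{n+1}$ (since finite unions of null sets are clearly null), which makes your parenthetical fix in the verification step unnecessary, and it drops the tree-coordinate $u\in T_P\upharpoonright n$ from the definition of $\mathcal D_n$ (the condition ``$Q\times[b,B]\cap\mathcal X_n=\emptyset$'' already handles all of $Q$, so refining by $u$ is redundant here).
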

\begin{proof}

\vspace{.25 cm}

Let $(\mathcal{X}_n)_n$ be a sequence of perfectly $\mathcal H$-Ramsey null
subsets of $2^{\infty}\times\finkinf$ and fix $P\times [a,A]$.
We can assume $a = \emptyset$. Also, it is easy to see that the finite union
of perfectly $\mathcal H$-Ramsey null sets yields a perfectly $\mathcal H$-Ramsey null set;
so we will assume $(\forall n)\ \mathcal{X}_n\subseteq
\mathcal{X}_{n+1}$. The rest of the proof is similar to the proof of Lemma \ref{lem2}. For $n\in\mathbb N$, let 

\[\mathcal D_n = \{(Q, B) : Q\subseteq P, B\leq A,\ (\forall b\in  \finkfin\upharpoonright B \mbox{ with }
\depth_{B}(b) = n)\ Q\times [b,B]\cap \mathcal{X}_n = \emptyset\}.\]

Let $\mathcal S = \{(Q, B)\in\mathbb P\times\mathcal H : Q\subseteq P, B\leq A\}$.  Every $\mathcal D_n$ is dense open in $\mathcal S$, so let $\mathcal D_{\infty}$ be as in Lemma \ref{dense prod fusion} and choose $(Q, B)\in\mathcal D_{\infty}$. Then,  $Q\times [0,B]\cap \bigcup_n\mathcal{X}_n =
\emptyset$: take $(x,C)\in Q\times [0,B]$ and fix arbitrary $n$.
To show that $(x,C)\not\in\mathcal{X}_n$ let $l$ be large enough
so that $\depth_B(r_l(C)) = m \geq n$. Then by construction
$Q\times [r_l(C),B]\cap \mathcal{X}_m = \emptyset$ and hence,
since $\mathcal{X}_n\subseteq \mathcal{X}_m$, we have
$(x,C)\not\in\mathcal{X}_n$. This completes the proof.
\end{proof}

Now, we borrow some terminology from \cite{Paw}: Let $\mathcal{A}$ be a family of subsets of a set $\mathcal{Z}$.
We say that $\mathcal{X}, \mathcal{Y}\subseteq\mathcal{Z}$ are
{\em compatible} (with respect to $\mathcal{A}$) if there exists
$\mathcal{W}\in \mathcal{A}$ such that
$\mathcal{W}\subseteq\mathcal{X}\cap\mathcal{Y}$. Also, we say
that $\mathcal{A}$ is $M$-{\em like} if for any
$\mathcal{B}\subseteq\mathcal{A}$  such that $|\mathcal{B}| <
|\mathcal{A}|$, every member of  $\mathcal{A}$ which is not
compatible with any member of  $\mathcal{B}$ is compatible with
$\mathcal{Z}\setminus\bigcup\mathcal{B}$.

\medskip

The families $\mathbb{P}$ of perfect subsets of
$2^{\infty}$ and $Exp(\mathcal{H})$ are $M$-like. Therefore, according to
Lemma 2.7 in \cite{Paw}, the family
$\mathbb{P}\times Exp(\mathcal{H}) = \{P\times [n,A] :
P\in\mathbb{P}\ \mbox{and}\ \   A\in\mathcal{H}\}$ is also
$M$-like. This lead us to the following:

\vspace{.25 cm}

\begin{coro}
The family of perfectly
$\mathcal H$-Ramsey subsets of $2^{\infty}\times\finkinf$ is closed under
the Souslin operation.
\end{coro}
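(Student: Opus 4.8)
The plan is to deduce the corollary from the abstract machinery of $M$-like families developed by Pawlikowski in \cite{Paw}, exactly as the paragraph preceding the statement indicates. The key reference is Lemma 2.7 (and its surrounding apparatus) in \cite{Paw}, which shows that if $\mathcal A$ is an $M$-like family on a set $\mathcal Z$ and one has a notion of ``$\mathcal A$-Ramsey'' set together with a matching ``$\mathcal A$-Baire property'' which coincide, and moreover the $\mathcal A$-null (i.e.\ $\mathcal A$-meager) sets form a $\sigma$-ideal, then the collection of $\mathcal A$-Ramsey sets is closed under the Souslin operation. So the corollary is an instance of that theorem with $\mathcal Z = 2^\infty\times\finkinf$ and $\mathcal A = \mathbb P\times\Ext(\mathcal H)$.

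Concretely, I would carry out the following steps. First, record that $\mathbb P$ is $M$-like: given fewer than continuum-many perfect sets, a perfect set incompatible with all of them is disjoint (modulo a perfect subset) from their union — this is the classical fact about perfect sets, already cited from \cite{Paw}. Second, record that $Exp(\mathcal H)$ is $M$-like; this uses the semiselectivity of $\mathcal H$, essentially the Galvin-type argument in Lemma \ref{galvinlocal2} together with the diagonalization property, and is again the analogue of the statement in \cite{Paw,Fa}. Third, invoke Lemma 2.7 of \cite{Paw} to conclude that the product family $\mathbb P\times Exp(\mathcal H) = \{P\times[n,A]: P\in\mathbb P,\ A\in\mathcal H\}$ is $M$-like. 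Fourth, assemble the three ingredients required by Pawlikowski's abstraction: (i) by Theorem \ref{thm1}(a), a set is perfectly $\mathcal H$-Ramsey iff it has the $\mathbb P\times Exp(\mathcal H)$-Baire property, so the ``Ramsey'' and ``Baire'' notions for this family coincide; (ii) by Lemma \ref{RamseyNull}, the perfectly $\mathcal H$-Ramsey null sets (equivalently, by Theorem \ref{thm1}(b), the $\mathbb P\times Exp(\mathcal H)$-meager sets) form a $\sigma$-ideal; (iii) the $M$-likeness just established. Pawlikowski's lemma then yields that the $\mathbb P\times Exp(\mathcal H)$-Baire sets — i.e.\ the perfectly $\mathcal H$-Ramsey sets — are closed under the Souslin operation.

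The main obstacle is not really in the present proof, which is a bookkeeping exercise of checking that our setting satisfies the hypotheses of \cite{Paw}, but rather in making sure the verification of $M$-likeness of $Exp(\mathcal H)$ is genuinely licensed by what has already been proved. The delicate point is that $M$-likeness requires: whenever $\mathcal B\subseteq Exp(\mathcal H)$ has size less than $|Exp(\mathcal H)|$ and $[n,A]$ is incompatible (in the sense of containing no common $[m,B]$ with $B\in\mathcal H$) with every member of $\mathcal B$, then $[n,A]$ contains some $[m,B]$, $B\in\mathcal H$, disjoint from $\bigcup\mathcal B$. This is exactly where the semiselective Galvin lemma (Lemma \ref{galvinlocal2}) and the diagonalization property of $\mathcal H$ enter, applied to the trace of $\bigcup\mathcal B$ on the approximation space $\finkfin$; one runs the standard ``fusion over a decreasing sequence of conditions'' argument, which works verbatim as in \cite{Fa}. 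Once this is in place, together with Theorem \ref{thm1} and Lemma \ref{RamseyNull}, the corollary follows immediately by citing Lemma 2.7 of \cite{Paw}.
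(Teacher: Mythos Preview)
Your proposal is correct and follows essentially the same route as the paper: invoke Theorem~\ref{thm1} to identify perfectly $\mathcal H$-Ramsey with the $\mathbb P\times Exp(\mathcal H)$-Baire property, use Lemma~\ref{RamseyNull} for the $\sigma$-ideal of null sets, note that $\mathbb P\times Exp(\mathcal H)$ is $M$-like via \cite{Paw}, and conclude by the Marczewski-type machinery there. The only discrepancy is bibliographic: the paper cites Lemma~2.7 of \cite{Paw} for the product of $M$-like families being $M$-like, and Lemmas~2.5 and~2.6 of \cite{Paw} (the Marczewski result) for the closure under the Souslin operation, whereas you attribute the latter to Lemma~2.7 as well; also, the paper simply asserts the $M$-likeness of $Exp(\mathcal H)$ rather than deriving it from the semiselective Galvin lemma as you sketch.
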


\begin{proof}

\vspace{.25 cm}

Theorem \ref{thm1} states that the family of perfectly $\mathcal H$-Ramsey
subsets of $2^{\infty}\times\finkinf$ coincides with the family
of subsets of $2^{\infty}\times\finkinf$ which have the
$\mathbb{P}\times Exp(\mathcal{H})$-Baire property. As
pointed out in the previous paragraph, $\mathbb{P}\times
Exp(\mathcal{H})$ is $M$-like. So the proof follows from Lemma
\ref{RamseyNull} above and Lemmas 2.5 and 2.6 of
\cite{Paw} (which refer to a well-known result of Marczewski
\cite{Mar}).

\end{proof}

\section{Mathias forcing on $\finkinf$}

We recall the definition of Mathias forcing with respect to a coideal $\mathcal H$ (\cite{Ma}).
Given a coideal $\mathcal H\subseteq \ninf$,
 $$\mathbb M_{\mathcal H} = \{[a,A] : a\in [\mathbb N]^{<\infty}, A\in \mathcal H, max(a)<min(A)\},$$

with the order relation $[a,A]\leq [b,B]$ if $b$ is an initial segment of $a$, $A\subseteq B$ and $a\setminus b \subseteq B$.

If $G$ is $\mathbb M_{\mathcal H}$-generic, then  $x=\bigcup \{a: \exists A\in \mathcal H ([a,A]\in G)\}$ is said to be an $\mathbb M_{\mathcal H}$-generic real.

We define an analogous forcing notion for the space $\finkinf$.

Given $s\in\FIN_k^{<\infty}$ and $A\in\finkk$, we define the set
$$[s,A]:=\{B\in\finkk:s\sqsubseteq B\text{ and }B\leq A\}$$
where $s\sqsubseteq B$ means that $s$ is an initial segment of $B$ in its increasing order.

\begin{defn}
Let $\cH$ be a coideal in $\finkinf$. The \textbf{Mathias forcing localized at $\cH$} is the partially ordered set
$$\bM_\cH:=\left\{(s,A)\in\FIN_k^{<\infty}\times\ \cH:s\less A\right\}$$
ordered by $(s,A)\leq(t,B)$ if $[s,A]\subseteq[t,B]$, i.e., $t\sqsubseteq s$, $A\leq B$ and $s\setminus t\subseteq[B]$.
\end{defn}

 We will show some facts about this forcing notion before continuing in our study of the consistency (relative to $ZF$) of the $\cH$-Ramseyness of every subset of $\finkk$ when $\cH$ is in a suitable class of coideals. For an ultrafilter $\cU$ we use the special notation $\bM_\cU:=\bM_{\cU^\infty}$.

We say that  $\cX\in\finkinf$ is $\mathbb M_{\mathcal H}$-\textbf{generic} over a model $V$ if for every dense open subset $\mathcal D\in V$ of $\mathbb M_{\mathcal H}$, there exists a condition $(a,A)\in\mathcal D$ such that $cX\in[a,A]$.

If $G$ is a $\bM_\cH$-generic filter over $V$, then
$$\cX=\bigcup\left\{s:(\exists A\in\cH)(s,A)\in G\right\}\in(\finkk)^{V[G]}$$
is a \textbf{$\bM_\cH$-generic block sequence over $V$}.

\begin{defn}
It is said that $\mathcal H$ has the \textbf{pure decision property} (the Prikry property) if for every sentence of the forcing language $\phi$ and every condition $(a,A)\in\mathbb M_{\mathcal H}$ there exists $B\in [a,A]\cap\mathcal H$ such that $(a,B)$ decides $\phi$. 

It is said that $\mathcal H$ has the  \textbf{hereditary genericity property} (the Mathias property) if it satisfies that if $\cX$ is  $\mathbb M_{\mathcal H}$-generic over a model $V$,  then every $\cY\leq \cX$ is $\mathbb M_{\mathcal H}$-generic over $V$.
\end{defn}

The following result was proved in \cite{Ga, Ga1} for the particular case of $\mathcal H= \finkinf$.  A more general version for topological Ramsey spaces appears in \cite{DPMN}

\begin{thm}\label{prikry prop}
If $\mathcal H\subseteq\finkinf$ is a semiselective coideal then it has the pure decision property. 
\end{thm}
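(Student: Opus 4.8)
The plan is to prove the pure decision (Prikry) property by a fusion argument that closely follows the classical Mathias–Prikry scheme, using the semiselectivity of $\mathcal H$ at the key combinatorial step. Fix a condition $(a,A)\in\bM_\cH$ and a sentence $\phi$ of the forcing language. The idea is to build, by diagonalizing an appropriate family of dense open subsets of $\mathcal H\cap[\depth_A(a),A]$, a single $B\in[a,A]\cap\mathcal H$ with the property that for \emph{every} $b\in\finkfin\upharpoonright B$ with $a\sqsubseteq b$, the condition $(b,B)$ already decides $\phi$ whenever \emph{some} extension $(b,C)$ with $C\leq B$ decides it. Once such a $B$ is obtained, a short inductive argument along the finite approximations $r_n(B)$ shows that $(a,B)$ itself decides $\phi$.

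First I would set up the combinatorial forcing. Call $b\supseteq a$ in $\finkfin\upharpoonright A$ \emph{good} (for $A'$) if there is $A'\in[\depth_A(b),A]\cap\mathcal H$ with $(b,A')$ deciding $\phi$, and otherwise \emph{bad}; say $(b,A')$ \emph{decides positively/negatively} according to the truth value forced. For each $a'\in\finkfin\upharpoonright A$ with $a\sqsubseteq a'$ define
$$\mathcal D_{a'}=\{C\in\mathcal H\cap[\depth_A(a'),A]: (a',C)\text{ decides }\phi,\text{ or no }(a',C')\text{ with }C'\leq C,\ C'\in\mathcal H,\text{ decides }\phi\}.$$
Using that $\leq$ is transitive and that deciding $\phi$ is inherited downward (if $(a',C)$ decides $\phi$ and $C'\leq C$, $C'\in\mathcal H$, then $(a',C')$ decides $\phi$ the same way), one checks that each $\mathcal D_{a'}$ is dense open in $\mathcal H\cap[\depth_A(a'),A]$. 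By semiselectivity (Definition \ref{semiselective}, or its reformulation Proposition \ref{semiequiv}), there is $B\in[a,A]\cap\mathcal H$ diagonalizing the collection $(\mathcal D_{a'})_{a\sqsubseteq a'}$, so that for each $b\in\finkfin\upharpoonright B$ with $a\sqsubseteq b$, the truncation $B/b$ lies in $\mathcal D_b$; i.e.\ either $(b,B)$ decides $\phi$, or no further $\mathcal H$-shrinking of $(b,B)$ decides $\phi$.

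The final step is the pure-decision induction. If $(a,B)$ decides $\phi$, we are done. Otherwise, by the diagonalization property, no condition $(a,C)$ with $C\leq B$, $C\in\mathcal H$, decides $\phi$. Now take any condition $(b,C)\leq(a,B)$ in $\bM_\cH$ that decides $\phi$; let $b$ be chosen with $|b|$ minimal among such, say $|b|=n>|a|$, and let $t=r_{n-1}(C)$, so $a\sqsubseteq t\sqsubseteq b$ and $(t,C)$ does not decide $\phi$. Consider the partition of $r_{|t|+1}[t,B]$ into the blocks $b'$ for which $(b',C)$ decides $\phi$ positively, negatively, or not at all; apply $\mathbf{A4}\bmod\mathcal H$ (property (d) of a coideal, available since $\mathcal H$ is semiselective) to get $C'\in[\depth_B(t),B]\cap\mathcal H$ on which $r_{|t|+1}[t,C']$ is monochromatic for this three-valued coloring. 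The minimality of $n$ and the fact that $(t,C)$ is undecided force the monochromatic value to be ``does not decide,'' contradicting that the original $(b,C)$ decided $\phi$ with $b\in r_{|t|+1}[t,C']$ (after intersecting with $C$). Hence $(a,B)$ decides $\phi$ after all.

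\textbf{Main obstacle.} The delicate point is organizing the density/diagonalization so that the single block sequence $B$ records decisions uniformly over \emph{all} finite extensions $b\supseteq a$ inside $[B]$ at once, rather than one extension at a time — this is exactly where semiselectivity (equivalently $\sigma$-distributivity with respect to $\leq^\ast$, Lemma \ref{distributive}) does the heavy lifting, replacing the single ``$\mathcal H$ selective $\Rightarrow$ diagonalize a decreasing $\omega$-sequence'' move of the classical proof. The inductive extraction of pure decision from this uniform $B$ via $\mathbf{A4}\bmod\mathcal H$ is then routine but must be stated carefully because the relevant coloring is three-valued and one must track depths correctly so that the monochromatic shrinking stays in $[\depth_B(t),B]\cap\mathcal H$.
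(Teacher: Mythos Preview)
Your diagonalization step---building $B\in[a,A]\cap\mathcal H$ so that for each $b\sqsupseteq a$ in $\finkfin\upharpoonright B$ either $(b,B)$ decides $\phi$ or no $(b,C)$ with $C\in\mathcal H$, $C\leq B$ decides---is correct and matches the paper's opening move exactly. The gap is in the final extraction.

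The minimality-plus-$\mathbf{A4}$ argument does not close. First, the three-valued coloring you propose, by the decision status of $(b',C)$, is not well defined: for a general $b'\in r_{|t|+1}[t,B]$ the last block of $b'$ lies in $[B]$ but need not sit below $C$, so $(b',C)$ need not be a condition at all. Second, even if you repair this by coloring according to $(b',B)$ (using the property of $B$ to transfer the decision from $(b,C)$ to $(b,B)$), when $\mathbf{A4}\bmod\mathcal H$ returns $C'\in[\depth_B(t),B]\cap\mathcal H$ with $r_{|t|+1}[t,C']$ monochromatic, there is no reason the witness $b$ survives into $r_{|t|+1}[t,C']$; if the monochromatic color is ``does not decide'' you have simply lost $b$, and your phrase ``after intersecting with $C$'' cannot rescue this because $\mathcal H$ is a coideal, not a filter. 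What is actually needed is a fusion across \emph{all} finite levels at once, not a single level-$|t|$ application of $\mathbf{A4}$. The paper supplies this via a second use of semiselectivity: after obtaining $B$ it sets $\mathcal F_0=\{b\sqsupseteq a:(b,B)\Vdash\phi\}$ and $\mathcal F_1=\{b\sqsupseteq a:(b,B)\Vdash\neg\phi\}$, applies the semiselective Galvin lemma (Lemma~\ref{galvinlocal2}) successively to $\mathcal F_0$ and $\mathcal F_1$ to pass to $C\in\mathcal H\cap[a,B]$, and finishes with a short compatibility argument showing that $(a,C)$ must decide $\phi$. That second invocation of semiselectivity, packaged as Galvin's lemma, is precisely what your one-step $\mathbf{A4}$ argument is missing.
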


\begin{proof}
Suppose $\cH\subseteq\finkinf$ is a semiselective coideal, and fix a sentence $\varphi$ if the forcing language and a condition 
$(a, A)\in \mathbb M_{\mathcal H}$. For every $b\in \finkfin$ with $a\sqsubseteq b$, let 

$$\mathcal D_b = \{B\in\mathcal H\cap[\depth_A(b),A] : (b,B)\mbox{ decides } \varphi\mbox{ or}\ (\forall C\in\mathcal H\cap [b,B])\ (b,C)\mbox{ does not decide } \varphi\}.$$

\noindent and set 
$\mathcal D_b = \mathcal H\cap [\depth_A(b),A] $, for all $b\in\finkfin\upharpoonright A$ with $b\not\sqsupseteq a$. 

Each $\mathcal D_b$ is dense open in $\mathcal H\cap[\depth_A(b),A]$. Fix a diagonalization $B\in\mathcal{H}\upharpoonright   A$. 
For every $b\in\finkfin  \upharpoonright  A$. Let 

$$\mathcal F_0 = \{b\in \finkfin  \upharpoonright  B : a\sqsubseteq b\ \ \&\ (b,B) \mbox{ forces }  \varphi\},$$

$$\mathcal F_1 = \{b\in\finkfin  \upharpoonright  B : a\sqsubseteq b\ \ \&\ (b,B) \mbox{ forces }  \neg\varphi\}.$$

Let $\hat{C}\in\mathcal{H}\upharpoonright  B$ as in Lemma \ref{galvinlocal2} applied to $a$,  $B$ and $\mathcal F_0$. And let $C\in\mathcal{H}\upharpoonright  \hat{C}$ be as in Lemma \ref{galvinlocal2} applied to $a$,  $\hat{C}$ and $\mathcal F_1$. Let us prove that $(a,C)$ decides $\varphi$. So let $(b_0,C_0)$ and $(b_1,C_1)$ be two different arbitrary extensions of $(a,C)$. Suppose that $(b_0,C_0)$ forces $\varphi$ and $(b_1,C_1)$ forces $\neg\varphi$. Then $b_0\in\mathcal F_0$ and $b_1\in\mathcal F_1$. But $b_0,b_1\in\finkfin\upharpoonright C$, so by the choice of $C$ this means that every element of $\mathcal H\cap[a,C]$ has an initial segment  in $\mathcal F_0$ and an initial segment in $\mathcal F_1$. So there exist two compatible extensions of $(a,C)$ such that one forces $\varphi$ and the other forces $\neg\varphi$. A contradiction. So either both $(b_0,C_0)$ and $(b_1,C_1)$ force $\varphi$ or both $(b_0,C_0)$ and $(b_1,C_1)$ force $\neg\varphi$. Therefore $(a,C)$ decides $\varphi$.

\end{proof}

Now we will prove  that if $\mathcal H\subseteq\finkinf$ is semiselective then it has the hereditary genericity property (see Theorem \ref{mathias prop} below). 

Given a selective ultrafilter $\mathcal U\subset\finkinf$, let $\mathbb M_{\mathcal U}$ be set of all pairs $(a,A)$ such that $A\in\mathcal U$ and $[a,A]\neq\emptyset$. Order $\mathbb M_{\mathcal U}$ with the same ordering used before.

\bigskip
Extending to the context of $\FIN_k$ the notion of capturing devised by Mathias in \cite{Ma} is essential in what follows.

\begin{defn}\label{captures}
 Let $\mathcal U\subseteq\finkinf$ be a selective ultrafilter, $\mathcal D$ a dense open subset of $\mathbb M_{\mathcal U}$, and $a\in\finkfin$. We say that $A$ {\bf captures} $(a,\mathcal D)$ if $A\in\mathcal U$, $[a,A]\neq\emptyset$, and for all $B\in [a,A]$ there exists $m>|a|$ such that $(r_m(B),A)\in\mathcal D$.
\end{defn}
 
\begin{lem}\label{lemma captures}
Let $\mathcal U\subseteq\finkinf$ be a selective ultrafilter and $\mathcal D$ a dense open subset of $\mathbb M_{\mathcal U}$. Then, for every $a\in\finkfin$ there exists $A\in \mathcal U$ which captures $(a,\mathcal D)$.
\end{lem}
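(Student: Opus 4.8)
The plan is to build $A$ by a diagonalization argument, choosing for each finite block sequence $b$ with $a \sqsubseteq b$ a single ``best response'' drawn from a dense open set, and then invoking selectivity of $\mathcal U$ to pull all these responses together. First I would fix $a \in \finkfin$ and, for each $b \in \finkfin$ with $a \sqsubseteq b$, consider the set $\mathcal E_b$ of all $B \in \mathcal U \cap [\depth_A(b), \mathbb{1}]$ (or more carefully, within whatever ambient element of $\mathcal U$ we start from, say $A_0$ with $[a,A_0]\neq\emptyset$) such that either there exists $m > |b|$ with $(r_m(B), B) \in \mathcal D$, or no $C \in [b,B] \cap \mathcal U$ satisfies $(r_m(C),C') \in \mathcal D$ for any suitable extension — i.e., $\mathcal E_b$ records that $(b,B)$ has been ``resolved'' with respect to $\mathcal D$. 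The key point is that $\mathcal D$ dense open in $\mathbb M_{\mathcal U}$ means: for every $(b,B) \in \mathbb M_{\mathcal U}$ there is $(b',B') \leq (b,B)$ in $\mathcal D$, and since $\mathcal D$ is open, once we are in $\mathcal D$ we stay there under strengthening. Using ${\bf A4} \bmod \mathcal U$ (Gowers' pigeonhole localized to $\mathcal U$, which holds because $\mathcal U$ is an ultrafilter coideal, hence in particular a coideal satisfying condition (d) of Definition \ref{ultra}) I would show each $\mathcal E_b$ is dense open in $\mathcal U \cap [\depth_{A_0}(b), A_0]$: given any $B$ in that set, density of $\mathcal D$ produces an extension meeting $\mathcal D$, and a pigeonhole over the one-step extensions $r_{|b|+1}[b, \cdot]$ lets us absorb this into a single condensation lying in $\mathcal U$.

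Next I would set $\mathcal D_b := \mathcal E_b$ for $b$ with $a \sqsubseteq b$ and $\mathcal D_b := \mathcal U \cap [\depth_{A_0}(b), A_0]$ otherwise, and apply semiselectivity (Theorem \ref{selective is semi} gives that $\mathcal U$, being selective, is semiselective) — more precisely its consequence via Lemma \ref{filtered} — to obtain $A \in \mathcal U$, $A \leq A_0$, diagonalizing the family $(\mathcal D_b)_{b \in \finkfin \upharpoonright A_0}$, so that $[b, A] \subseteq [b, A_b]$ for some $A_b \in \mathcal D_b$ whenever $b \in \finkfin \upharpoonright A$. The claim is then that this $A$ captures $(a, \mathcal D)$. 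Certainly $A \in \mathcal U$ and, since $a \sqsubseteq r_{|a|}(A_0)$ can be arranged (choosing $A_0$ with $a \sqsubseteq A_0$), $[a,A] \neq \emptyset$. To verify the capturing condition, take any $B \in [a,A]$; I need $m > |a|$ with $(r_m(B), A) \in \mathcal D$. The diagonalization ensures that along $B$ every initial segment $b$ sits inside $[b, A_b]$ with $A_b \in \mathcal E_b$, and a short argument — the ``early rejection is impossible'' step familiar from Mathias-style proofs — shows that the ``no extension meets $\mathcal D$'' alternative in the definition of $\mathcal E_b$ cannot persist all the way down, because $\mathcal D$ is dense in $\mathbb M_{\mathcal U}$ and $(a, A) \in \mathbb M_{\mathcal U}$; hence for some $m$ the good alternative fires, giving $(r_m(B), A) \in \mathcal D$ after using openness of $\mathcal D$ to replace the witnessing condensation by $A$ itself (which is below it and in $\mathcal U$).

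The main obstacle I anticipate is the bookkeeping in defining $\mathcal E_b$ so that it is genuinely dense open in the localized space $\mathcal U \cap [\depth_{A_0}(b), A_0]$ while still being strong enough that the diagonalizing $A$ forces capture: one must phrase the ``resolved'' alternative so that it is inherited downward (openness) and so that its failure at $b$ propagates the obligation to the one-step extensions of $b$ (this is exactly where ${\bf A4} \bmod \mathcal U$ enters, analogous to part (\ref{not accept longer}) of Lemma \ref{comb forcing}). The second delicate point is the final ``rejection cannot persist forever'' step: here I would argue by contradiction, supposing some $B \in [a,A]$ has $(r_m(B),A) \notin \mathcal D$ for all $m > |a|$, and derive that $(a, B)$ — or rather $(a, A)$ via $B$ — has no extension in $\mathcal D$, contradicting density of $\mathcal D$ in $\mathbb M_{\mathcal U}$. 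Everything else is routine once the two combinatorial lemmas (pigeonhole mod $\mathcal U$ and semiselective diagonalization) are invoked in the right order.
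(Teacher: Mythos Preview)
Your overall shape---diagonalize using selectivity, then argue that the ``bad'' alternative contradicts density of $\mathcal D$---is right, but your final contradiction step does not go through as written, and this is exactly the place where the paper's proof differs from yours. You write: ``supposing some $B\in[a,A]$ has $(r_m(B),A)\notin\mathcal D$ for all $m>|a|$, derive that $(a,A)$ has no extension in $\mathcal D$.'' That inference is invalid: a single bad path $B$ through $[a,A]$ says nothing about other extensions of $(a,A)$. Density hands you some $(b',A')\le(a,A)$ in $\mathcal D$, but $b'$ need not lie along $B$ and $A'$ need not equal $A$, so there is no immediate contradiction. Even if you run the full accept/reject propagation so that your diagonalizing $A$ rejects every $b\sqsupseteq a$, you still need to convert ``$(b',A')\in\mathcal D$ for some $A'\le A$ in $\mathcal U$'' into ``some element of $\mathcal U$ in the correct depth-interval below $A$ accepts $b'$,'' and that conversion is not automatic from your $\mathcal E_b$ as described.

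The paper closes this gap by separating the argument into two phases. Phase~1 uses selectivity directly (not the accept/reject dichotomy) to build a single $C\in\mathcal U\cap[a,B]$ with the absorption property: for every $b\sqsupseteq a$ in $\finkfin\upharpoonright C$, if there exists \emph{any} $\hat C\in\mathcal U$ with $(b,\hat C)\in\mathcal D$, then already $(b,C)\in\mathcal D$. This is obtained by choosing, for each $b$, some $C_b$ with $(b,C_b)\in\mathcal D$ when possible, arranging the $C_b$'s into a $\le$-decreasing sequence indexed by depth, and diagonalizing. Phase~2 then observes that $\mathcal X=\{D\le C:(\exists b)\ a\sqsubset b\ \&\ (b,C)\in\mathcal D\}$ is metrically open, applies Lemma~\ref{openram} to get $\hat C\in\mathcal U$ with $[a,\hat C]\subseteq\mathcal X$ or $[a,\hat C]\cap\mathcal X=\emptyset$, and uses density of $\mathcal D$ together with the absorption property from Phase~1 to force the first alternative. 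The absorption step is precisely what your plan is missing: it is what lets one pass from ``some $(b',A')\in\mathcal D$ exists below $(a,A)$'' to a statement about the fixed second coordinate $C$, which is what is needed both to define the open set $\mathcal X$ and to obtain the final contradiction.
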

\begin{proof}

Given $a\in\finkfin$, we can choose $B\in\mathcal U$ such that $[a,B]\neq\emptyset$, for example $\one$. We  define a collection $(C_b)_{b\in\finkfin\upharpoonright B}$ with $[b,C_b]\neq\emptyset$, such that:

\begin{enumerate}
\item For all $b_1,b_2\in\finkfin\upharpoonright  B$, if $\depth_B(b_1)=\depth_B(b_2)$ then  $C_{b_1} = C_{b_2}$.
\item For all $b_1,b_2\in\finkfin\upharpoonright  B$, if $b_1\sqsubseteq b_2$ then  $C_{b_1} \geq C_{b_2}$.
\item For all $b\in\finkfin\upharpoonright  B$ with $a\sqsubseteq b$ either $(b,C_b)\in\mathcal D$ or if such a $C_b\in \mathcal D$ does not exist then $C_b=B$. 
\end{enumerate}

For every $b\in\finkfin\upharpoonright  B$, let $C_n=C_b$ if $\depth_B(b)=n$. Notice that $C_n\geq C_{n+1}$, for all every $n\in\mathbb N$. By selectivity, let $C\in\mathcal U\cap[a,B]$ be a diagonalization of $(C_n)_{n\in\mathbb N}$. Then, for all $b\in\finkfin\upharpoonright  C$ with $a\sqsubseteq b$, if there exists a $\hat{C}\in\mathcal U$ such that $(b,\hat{C})\in\mathcal D$, we must 
have $(b,C)\in\mathcal D$. 

Let $\mathcal X=\{D\in\finkinf : D\leq C\rightarrow (\exists b\in\finkfin\upharpoonright  D)\ a\sqsubset b\ \&\ (b,C)\in\mathcal D\}$. $\mathcal X$ is a metric open subset of $\finkinf$ and therefore, by Lemma \ref{openram}, it is $\mathcal U$-Ramsey. Take $\hat{C}\in\mathcal U\cap[\depth_C(a),C]$ such that $[a,\hat{C}]\subseteq\mathcal X$ or $[a,\hat{C}]\cap\mathcal X=\emptyset$. We will show that the first alternative holds: Pick $A\in\mathcal U\cap[a,\hat{C}]$ and $(a',A')\in\mathcal D$ such that $(a',A')\leq (a,A)$. Notice that $a\sqsubseteq a'$ and therefore, by (3), we have $(a',C)\in\mathcal D$. By the definition of $\mathcal X$, we also have $A'\in\mathcal X$. Now choose $A''\in\mathcal U\cap[a',A']$. Then $(a', A'')$ is also in $\mathcal D$ and therefore $A''\in\mathcal X$. But $A''\in[a', A']\subseteq[a, A]\subseteq [a,\hat{C}]$. This implies $[a,\hat{C}]\subseteq\mathcal X$. Finally, that $A$ captures $(a,\mathcal D)$ follows from the definition of $\mathcal X$ and the fact that $[a,A]\subseteq [a,\hat{C}]\subseteq [a,C]$. This completes the proof.
\end{proof}

\begin{thm}\label{forcing M_U}
Let $\mathcal U\subseteq \finkinf$ be a selective ultrafilter in a given transitive model $V$ of $ZF+DC\mathbb R$. Forcing over $V$ with $\mathbb M_{\mathcal U}$ adds a generic $g\in\finkinf$ with the property that $g\leq^*A$ for all $A\in\mathcal U$. In fact, $B\in\finkinf$ is $\mathbb M_{\mathcal U}$-generic over $V$ if and only if $B\leq^* A$ for all $A\in\mathcal U$. 
\end{thm}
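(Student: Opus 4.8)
\textbf{Proof plan for Theorem \ref{forcing M_U}.}

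The plan is to prove the ``in fact'' statement, since the first assertion follows from it together with a genericity argument. The forward direction (if $B$ is $\bM_\cU$-generic over $V$ then $B\leq^* A$ for every $A\in\cU$) is the easy half: for a fixed $A\in\cU$, the set $\mathcal D_A=\{(s,C)\in\bM_\cU : C\leq A\}$ is dense (given $(t,D)$, use that $\cU$ is a filter to pick $C\in\cU$, $C\leq D$ and $C\leq A$, noting $[t,C]\neq\emptyset$ by A3 mod $\cU$), and it is open. Since $B$ meets $\mathcal D_A$, there is $(s,C)$ in it with $B\in[s,C]$, so $B\leq C\leq A$ except possibly on the finitely many blocks of $s$, giving $B\leq^* A$. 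First I would write this out, being careful that $\mathcal D_A$ lies in $V$ because $A$ and $\cU$ do.

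For the converse, suppose $B\leq^* A$ for every $A\in\cU$; I must show $B$ meets every dense open $\mathcal D\in V$ of $\bM_\cU$, i.e. find $(s,C)\in\mathcal D$ with $B\in[s,C]$. The key tool is Lemma \ref{lemma captures}: work along initial segments of $B$. For each $n$, let $b_n=r_n(B)$; apply Lemma \ref{lemma captures} to $\mathcal D$ and $b_n$ to get $A_n\in\cU$ capturing $(b_n,\mathcal D)$. Since $\cU$ is a filter we may arrange $A_0\geq A_1\geq\cdots$, and by selectivity of $\cU$ take a single $A\in\cU$ diagonalizing $(A_n)$; then $A$ captures $(b_n,\mathcal D)$ for all $n$ (capturing is inherited downward inside $\cU$, since $\mathcal D$ is open). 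By hypothesis $B\leq^* A$, so there is $N$ with $r_N(B)\sqsubseteq B$, $B/r_N(B)\leq A$ and $[r_N(B),A]\neq\emptyset$; thus $B\in[r_N(B),A]$. Because $A$ captures $(r_N(B),\mathcal D)$, there is $m>N$ with $(r_m(B),A)\in\mathcal D$. Setting $s=r_m(B)$ and $C=A$ we get $(s,C)\in\mathcal D$ with $B\in[s,C]$, as desired. Finally, the first sentence of the theorem follows: a generic filter $G$ yields $g=\bigcup\{s:(\exists A\in\cU)\,(s,A)\in G\}\in\finkinf$, and $g$ is $\bM_\cU$-generic over $V$ in the sense above, so by the equivalence just proved $g\leq^* A$ for all $A\in\cU$.

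The main obstacle is the bookkeeping in the converse: one has to be sure that (i) capturing is preserved under passing to a $\leq$-smaller element of $\cU$ when $\mathcal D$ is open, so that the single diagonalization $A$ captures every $(b_n,\mathcal D)$ at once; and (ii) that $B\leq^* A$ really places $B$ inside some basic neighborhood $[r_N(B),A]$ of the poset, which needs $[r_N(B),A]\neq\emptyset$ — guaranteed by A3 mod $\cU$ once $N\geq\depth_A(r_N(B))$, achievable by enlarging $N$. Everything else is a routine density/genericity argument. I would also remark that the use of selectivity (rather than mere semiselectivity) here is exactly what makes the sequence $(A_n)_{n}$ diagonalizable within $\cU$, paralleling Mathias's original treatment on $\ninf$.
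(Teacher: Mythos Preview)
Your plan follows the paper's proof almost exactly: the forward direction via the density of $\{(s,C):C\leq A\}$ (the paper uses $C\leq^* A$, to the same effect), and the converse via Lemma~\ref{lemma captures} applied along the initial segments $r_n(B)$, a diagonalization in $\mathcal U$, and then extracting a condition in $\mathcal D$ from capturing.

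There is, however, a genuine gap in the final step of your converse. You write ``Because $A$ captures $(r_N(B),\mathcal D)$, there is $m>N$ with $(r_m(B),A)\in\mathcal D$.'' But Lemma~\ref{lemma captures} and the selectivity of $\mathcal U$ are hypotheses \emph{in $V$}; the capturing you obtain says only that every $C\in[r_N(B),A]$ \emph{lying in $V$} has an initial segment landing in $\mathcal D$. The sequence $B$ need not be in $V$ --- indeed, the interesting case (the generic $g$) is precisely when it is not --- so you cannot apply the definition of capturing to $B$ directly. The paper closes this gap with a Mostowski absoluteness step: the statement
\[
(\forall C\in[m,A])\ (\exists n>m)\ ((r_n(C),A)\in\mathcal D)
\]
is equivalent to the well-foundedness under $\sqsubset$ of the tree $\mathcal F=\{b:(\exists n>m)(b\in r_n[m,A]\ \&\ (b,A)\notin\mathcal D)\}$, which lies in $V$; since well-foundedness is absolute, the displayed statement holds in the universe and \emph{only then} applies to $B$. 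This is not routine bookkeeping, and it is exactly what your ``obstacles'' paragraph misses.

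A second, smaller issue: your proposed fix for (ii), ``enlarging $N$'', does not work. From $B\leq^* A$ you get $B/r_N(B)\leq A$ for large $N$, but the first $N$ blocks of $B$ need not lie in $[A]$, so $B\notin[r_N(B),A]$ no matter how large $N$ is. The paper resolves this by modifying $A$, not $N$: using that $\mathcal U$ is closed under finite changes and $\mathbf{A3}\bmod\mathcal U$, it replaces $A$ by an element of $\mathcal U$ with $r_m(A)=r_m(B)$ and $A\in[r_m(B),A_m]$. This simultaneously makes $B\in[m,A]$ and makes the inheritance of capturing from $A_m$ to $A$ literal (rather than only up to $\leq^*$), which your phrase ``capturing is inherited downward'' glosses over.
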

\begin{proof}

Suppose that $B\in \finkinf$ is $\mathbb M_{\mathcal U}$-generic over $V$. Fix an arbitrary $A\in\mathcal U$. Let us show that the set 
$\{(c,C)\in\mathbb M_{\mathcal U} : C\leq^*A\}$ 
is dense open. Note that this set is in $V$. Fix $(a,A')\in\mathbb M_{\mathcal U}$. 
Since $\mathcal U$ admits finite changes, choose $A''\leq^* A$ in $\mathcal U$ such that $[a, A'']\neq \emptyset$. Clearly, $[a,A']\cap [a,A'']\neq \emptyset$, and then there is 
$n\in \mathbb N$ and $C_1\in \mathcal U$ such that $[n,C_1]\subseteq [a, A']\cap [a,A'']$. Let $c=r_n(C_1)$.
By ${\bf A3}$ mod $\mathcal U$, there exists $C_2\in\mathcal U\cap[\depth_{A'}(c),A']$ such that $\emptyset\neq[c,C_2]\subseteq[c,C_1]$. It is clear that $[c,C_2]\subseteq[c,A'']$ and therefore $C_2\leq^*A''\leq^* A$. Also, since $\depth_{A'}(c)\geq\depth_{A'}(a)$, we have $[a,C_2]\neq\emptyset$. Thus,  $(a,C_2)\leq (a,A')$. That is, $\mathcal D$ is dense. It is obviously open. So, by genericity, there exists  $(c,C)\in\mathcal D$ such that $B\in[c,C]$. Hence $B\leq^*A$.

Now, suppose that $B\in\finkinf$ is such that $B\leq^*A$ for all $A\in\mathcal U$, and let $\mathcal D$ be a dense open subset of $\mathbb M_{\mathcal U}$. We need to find $(a,A)\in\mathcal D$ such that $B\in[a,A]$. In $V$, by using Lemma \ref{lemma captures} iteratively, we can define a sequence $(A_n)_n$ such that $A_n\in\mathcal U$, $A_{n+1}\leq A_n$, and $A_n$ captures $(r_n(B),\mathcal D)$. Since $\mathcal U$ is in $V$ and selective, we can choose $A\in\mathcal U$, in $V$, such that $A\leq^*A_n$ for all $n$. By our assumption on $B$, we have $B\leq^*A$. So there exists an $a\in \finkfin $ such that $[a,B]\subseteq[a,A]$. Let $m=\depth_B(a)$. By ${\bf A3}$ mod $\mathcal U$, we can assume that $a=r_m(B)=r_m(A)$, and also that $A\in[r_m(B),A_m]$. Therefore, $B\in[m,A]$ and $A$ captures $(r_m(B),\mathcal D)$. Hence, the following is true in $V$: 

\begin{equation}\label{eq captures}
(\forall C\in[m,A]) (\exists n>m) ((r_n(C),A)\in\mathcal D).
\end{equation}

Let $\mathcal F=\{b : (\exists n>m)(b\in r_n[m,A]\ \&\ (b,A)\notin\mathcal D)\}$ and give $\mathcal F$ the strict end-extension ordering $\sqsubset$. Then the relation $(\mathcal F,\sqsubset)$ is in $V$, and by equation \ref{eq captures} $(\mathcal F,\sqsubset)$ is well-founded. Therefore, by a well-known argument due to Mostowski, equation \ref{eq captures} holds in the universe. Hence, since $B\in[m,A]$, there exists $n>m$ such that $(r_n(B),A)\in\mathcal D$. But $B\in[r_n(B),A]$, so $B$ is $\mathbb M_{\mathcal U}$--generic over $V$.
\end{proof}

\begin{coro} \label{Mu generic}
If $B$ is $\mathbb M_{\mathcal U}$--generic over some model $V$ and $A\leq B$ then $A$ is also $\mathbb M_{\mathcal U}$--generic over $V$. In other words, $\cU$ has the hereditary genericity property.
\end{coro}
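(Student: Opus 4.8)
The plan is to read the corollary straight off the characterization of genericity established in Theorem~\ref{forcing M_U}: a block sequence $B\in\finkinf$ is $\mathbb M_{\mathcal U}$-generic over $V$ precisely when $B\leq^{*}A$ for every $A\in\mathcal U$. So, given a generic $B$ and a condensation $A\leq B$, it suffices to verify that $A\leq^{*}C$ for every $C\in\mathcal U$, and then apply the characterization in the opposite direction to conclude that $A$ is $\mathbb M_{\mathcal U}$-generic over $V$, which is exactly the hereditary genericity property for $\mathcal U$.

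To see $A\leq^{*}C$, fix $C\in\mathcal U$. Genericity of $B$ and Theorem~\ref{forcing M_U} give $B\leq^{*}C$, so only finitely many elements of $B$ lie outside $[C]$; let $M$ be an upper bound for the supports of those exceptional elements. Since $A$ is a block sequence, its elements have strictly increasing supports, so all but finitely many elements $a$ of $A$ satisfy $\min(\supp(a))>M$. Fix such an $a$ together with a representation $a=T^{(i_0)}(p_{n_0})+\dots+T^{(i_l)}(p_{n_l})$ with $n_0<\dots<n_l$ and each $p_{n_j}$ an element of $B$. Since $\supp(T^{(i_j)}(p_{n_j}))\subseteq\supp(p_{n_j})$ and the nonzero summands have pairwise disjoint supports partitioning $\supp(a)$, any $p_{n_j}$ contributing a nonzero summand has an element of its support exceeding $M$, hence is not one of the finitely many exceptional elements and therefore lies in $[C]$. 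As $[C]$ is the subsemigroup of $\FIN_k$ generated by $C$, it is closed under $T$ and under sums of elements with disjoint supports, so $a\in[C]$. Thus all but finitely many elements of $A$ belong to $[C]$, i.e.\ $A\leq^{*}C$.

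Since $C\in\mathcal U$ was arbitrary, $A\leq^{*}C$ holds for every $C\in\mathcal U$, and Theorem~\ref{forcing M_U} then yields that $A$ is $\mathbb M_{\mathcal U}$-generic over $V$.

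The only content beyond citing Theorem~\ref{forcing M_U} is the ``transitivity modulo a finite set'' step $A\leq B$ and $B\leq^{*}C\Rightarrow A\leq^{*}C$; I expect this support-counting argument, which rests on the facts that block sequences have eventually large supports and that $T$ never enlarges a support, to be the only place where a small verification is needed, and it is routine. The standing hypotheses on $V$ and on $\mathcal U$ (a transitive model of $ZF+DC\mathbb R$ and a selective ultrafilter of $\finkinf$ lying in it) are inherited from Theorem~\ref{forcing M_U}.
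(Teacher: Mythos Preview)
Your proof is correct and follows the same route the paper intends: the corollary is stated without proof, as an immediate consequence of the characterization in Theorem~\ref{forcing M_U}, and the only thing to check is the implication $A\leq B\ \&\ B\leq^{*}C\Rightarrow A\leq^{*}C$, which you verify carefully. Your support-counting argument for this transitivity step is sound (in particular your handling of possibly vanishing summands $T^{(i_j)}(p_{n_j})$ is the right way to rule out hidden contributions from the finitely many exceptional blocks of $B$); the paper simply leaves this verification implicit.
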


\begin{lem}\label{Mu iteration}
Let $\mathcal H\subseteq\finkinf$ be a semiselective coideal. Consider the forcing notion $\mathbb P = (\mathcal H,\leq^*)$ and let $\hat{\mathcal U}$ be a $\mathbb P$--name for a $\mathbb P$--generic ultrafilter. Then the iteration $\mathbb P *\mathbb M_{\hat{\mathcal U}}$ is equivalent to the forcing $\mathbb M_{\mathcal H}$.
\end{lem}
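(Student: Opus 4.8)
The plan is to exhibit a dense embedding of $\mathbb{M}_{\mathcal{H}}$ into a dense subset of the two-step iteration $\mathbb{P} * \mathbb{M}_{\hat{\mathcal{U}}}$, and conversely, so that the two posets give the same generic extensions. First I would recall the standard template for results of this form (forcing with a coideal followed by Mathias forcing relative to the generic ultrafilter equals Mathias forcing with the coideal): the point is that a condition $(a,A) \in \mathbb{M}_{\mathcal{H}}$ carries exactly the data needed to specify both a condition $A \in \mathbb{P} = (\mathcal{H},\leq^*)$ and, below it, a $\mathbb{P}$-name for a condition $(\check a, \dot A) \in \mathbb{M}_{\hat{\mathcal{U}}}$, since $\mathbb{P}$ forces $A \in \hat{\mathcal{U}}$ whenever $A$ is in the generic filter. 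So the natural map sends $(a,A) \mapsto (A, (\check a, \check A))$.

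The key steps, in order, would be: (1) Check that this map is well-defined into $\mathbb{P} * \mathbb{M}_{\hat{\mathcal{U}}}$: if $A \in \mathcal{H}$ and $a < A$, then $A \Vdash \check A \in \hat{\mathcal{U}}$ and $[\check a, \check A] \neq \emptyset$, so $(\check a, \check A)$ names a legitimate $\mathbb{M}_{\hat{\mathcal{U}}}$-condition below the trivial one. (2) Check the map is order-preserving and that incompatibility is preserved: $(a,A) \leq (b,B)$ in $\mathbb{M}_{\mathcal{H}}$ means $b \sqsubseteq a$, $A \leq B$, $a \setminus b \subseteq [B]$; this forces $\check A \leq^* \check B$ in $\mathbb{P}$ and $(\check a,\check A) \leq (\check b,\check B)$ in $\mathbb{M}_{\hat{\mathcal{U}}}$. (3) Show the image is dense: given an arbitrary condition $(A, \dot q) \in \mathbb{P} * \mathbb{M}_{\hat{\mathcal{U}}}$ with $A \Vdash \dot q = (\dot b, \dot B)$, use a maximal antichain / density argument below $A$ together with the fact that $\mathbb{P}$ is $\sigma$-distributive (Lemma \ref{distributive}, since $\mathcal{H}$ is semiselective) to find $A' \leq^* A$ in $\mathcal{H}$ deciding $\dot b$ to be an actual finite block sequence $a \in \mathcal{H}^{<\infty}$ and deciding $\dot B$ to be (a condensation of) some ground-model $B \in \mathcal{H}$ with $A' \leq^* B$; then refine to a genuine $\mathbb{M}_{\mathcal{H}}$-condition $(a, C)$ with $C \leq^* A'$, $a < C$, whose image refines $(A,\dot q)$. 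Here the $\sigma$-distributivity is exactly what guarantees that the name $\dot b$ for a finite object, and the name $\dot B$ for the "tail", can be read off in the ground model, so that no new reals sneak into the first coordinate of the iteration. (4) Conversely, argue that every $\mathbb{M}_{\mathcal{H}}$-condition arises this way up to the equivalence, i.e.\ the image is not just dense but the embedding is dense in both directions, giving the forcing equivalence; one also notes that an $\mathbb{M}_{\mathcal{H}}$-generic filter $G$ projects to a $\mathbb{P}$-generic $\mathcal{U}_G = \{A \in \mathcal{H} : (\exists a)\,(a,A) \in G\}$ (upward closure and genericity), and that $G$ read through $\mathcal{U}_G$ is $\mathbb{M}_{\mathcal{U}_G}$-generic by Theorem \ref{forcing M_U}, since the generic block sequence $\mathcal{X}$ satisfies $\mathcal{X} \leq^* A$ for every $A \in \mathcal{U}_G$.

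The main obstacle I expect is step (3), the density argument: one must handle a $\mathbb{P}$-name $\dot q$ for an $\mathbb{M}_{\hat{\mathcal{U}}}$-condition whose second coordinate $\dot B$ need only be forced to lie in $\hat{\mathcal{U}}$, not to be in the ground model, and pin it down to a ground-model condensation without adding reals. This is where semiselectivity (via $\sigma$-distributivity with respect to $\leq^*$, Lemma \ref{distributive}) does the real work: the sequence of dense sets "decide the $n$-th block of $\dot b$" and "decide more of $\dot B$" has intersection dense in $(\mathcal{H},\leq^*)$ and lies in the ground model, so there is a single $A' \in \mathcal{H}$ below $A$ deciding all of it. Once $\dot b$ is decided to be a real finite block sequence $a$ and $\dot B$ to be (above) a ground-model $B$, matching up supports so that $a < B$ and refining to $C \leq^* A'$ with $a < C$ is routine bookkeeping with the amalgamation axioms A.2--A.3. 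The remaining verifications (order preservation, compatibility, the converse projection) are direct from the definitions of the orderings on $\mathbb{M}_{\mathcal{H}}$, $\mathbb{P}$, and $\mathbb{M}_{\hat{\mathcal{U}}}$, and from Theorem \ref{forcing M_U} together with Corollary \ref{Mu generic}.
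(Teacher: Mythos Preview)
Your approach is essentially the same as the paper's: both define the map $(a,A)\mapsto (A,(\check a,\check A))$, verify order preservation, and prove density of the image using $\sigma$-distributivity of $(\mathcal H,\leq^*)$ (Lemma~\ref{distributive}) to decide the names $\dot b$ and $\dot B$ in the ground model, then find a common refinement $D\in\mathcal H$ below both the decided $B$ and the deciding condition. Your step (4) about projecting to a $\mathbb P$-generic and invoking Theorem~\ref{forcing M_U} is extra commentary---once the dense embedding is established the forcing equivalence is immediate---but it is correct and does no harm.
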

\begin{proof}

Recall that $\mathbb P *\mathbb M_{\hat{\mathcal U}}=\{(B,(\dot{a},\dot{A})) : B\in\mathcal H\ \&\ B\vdash (\dot{a},\dot{A})\in \mathbb M_{\hat{\mathcal U}}\}$, with the ordering  $(B,(\dot{a},\dot{A}))\leq (B_0,(\dot{a}_0,\dot{A}_0)) \Leftrightarrow B\leq^* B_0 \ \&\ (\dot{a},\dot{A})\leq (B_0,(\dot{a}_0,\dot{A}_0)$. The mapping $(a,A) \rightarrow (A, (\hat{a}, \hat{A}))$ is a dense embedding  from $\mathbb M_{\mathcal H}$ to $\mathbb P *\mathbb M_{\hat{\mathcal U}}$ (here $\hat{a}$ and $\hat{A}$ are the canonical $\mathbb P$-names for $a$ and $A$, respectively): It is easy to show that this mapping preserves the order. So, given $(B,(\dot{a},\dot{A}))\in\mathbb P *\mathbb M_{\hat{\mathcal U}}$, we need to find $(d,D)\in\mathbb M_{\mathcal H}$ such that $(D, (\hat{d}, \hat{D}))\leq (B,(\dot{a},\dot{A}))$. Since $\mathbb P$ is $\sigma$-distributive, there exists $a\in \finkfin $, $A\in\mathcal H$ and $C\leq* B$ in $\mathcal H$ such that $C\vdash_{\mathbb P} (\hat{a}=\dot{a}\ \&\ \hat{A}=\dot{A})$ (so we can assume $a\in \finkfin\upharpoonright  C$). Notice that $(C, (\hat{a}, \hat{A}))\in\mathbb P *\mathbb M_{\hat{\mathcal U}}$ and $(C, (\hat{a}, \hat{A})), (C, (\hat{a}, \hat{A}))\leq (B,(\dot{a},\dot{A}))$. So, $C\vdash_{\mathbb P} \hat{C}\in\hat{\mathcal U}$ and $C\vdash_{\mathbb P} \hat{A}\in\hat{\mathcal U}$. Then, $C\vdash_{\mathbb P} (\exists x\in\hat{\mathcal U})(x\in[\hat{a},\hat{A}]\ \&\ x\in[\hat{a},\hat{C}]$. So there exists $D\in\mathcal H$ such that $D\in[a,A]\cap[a,C]$. Hence, $(D, (\hat{a}, \hat{D}))\leq (B,(\dot{a},\dot{A}))$. This completes the proof.
\end{proof}

\bigskip

The next theorem follows inmediately from  Corollary \ref{Mu generic}  and Lemma \ref{Mu iteration}.

\begin{thm}\label{mathias prop}
If $\mathcal H\subseteq\finkinf$ is a semiselective coideal then it has the hereditary genericity  property.
\end{thm}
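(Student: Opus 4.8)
The plan is to derive Theorem \ref{mathias prop} by combining the two structural results established immediately before it: Corollary \ref{Mu generic}, which gives hereditary genericity for $\mathbb M_{\mathcal U}$ when $\mathcal U$ is a selective ultrafilter, and Lemma \ref{Mu iteration}, which factors $\mathbb M_{\mathcal H}$ as the two-step iteration $\mathbb P * \mathbb M_{\hat{\mathcal U}}$ where $\mathbb P = (\mathcal H, \leq^*)$ and $\hat{\mathcal U}$ names the generic ultrafilter added by $\mathbb P$. First I would recall that a semiselective coideal is $\sigma$-distributive with respect to $\leq^*$ (Lemma \ref{distributive}), so $\mathbb P$ adds no new reals, and hence no new elements of $\finkinf$; in particular the generic ultrafilter $\hat{\mathcal U}$, once interpreted, is a selective ultrafilter living in an inner model that has the same reals as $V$.

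The key steps, in order, are: (1) Let $\cX$ be $\mathbb M_{\mathcal H}$-generic over $V$ and let $\cY \leq \cX$ be an arbitrary condensation. Via the dense embedding of Lemma \ref{Mu iteration}, reinterpret $\cX$ as an $\mathbb M_{\mathcal H} \cong \mathbb P * \mathbb M_{\hat{\mathcal U}}$-generic object; this splits the genericity of $\cX$ into genericity for $\mathbb P$ (producing a selective ultrafilter $\mathcal U$ over $V$, with $V$ and $V[\mathcal U]$ sharing reals by $\sigma$-distributivity) followed by genericity for $\mathbb M_{\mathcal U}$ over $V[\mathcal U]$. (2) Apply Corollary \ref{Mu generic} in $V[\mathcal U]$: since $\cX$ is $\mathbb M_{\mathcal U}$-generic over $V[\mathcal U]$ and $\cY \leq \cX$, the object $\cY$ is also $\mathbb M_{\mathcal U}$-generic over $V[\mathcal U]$. (3) Re-assemble: $\cY$ being $\mathbb M_{\mathcal U}$-generic over $V[\mathcal U]$, together with $\mathcal U$ being $\mathbb P$-generic over $V$, makes $\cY$ generic for the iteration $\mathbb P * \mathbb M_{\hat{\mathcal U}}$ over $V$, hence $\mathbb M_{\mathcal H}$-generic over $V$ by the equivalence of Lemma \ref{Mu iteration}. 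This is exactly the hereditary genericity property.

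The main obstacle I expect is step (1): carefully checking that an $\mathbb M_{\mathcal H}$-generic block sequence $\cX$ genuinely decomposes into a $\mathbb P$-generic ultrafilter $\mathcal U$ plus an $\mathbb M_{\mathcal U}$-generic object, and that the notion of "$\mathbb M_{\mathcal H}$-generic $\cY$ over $V$" in the sense of Definition of genericity (meeting every dense open set of conditions) matches up correctly on both sides of the dense embedding. One must verify that the condensation relation $\leq$ on $\finkinf$ is respected by the translation $(a,A)\mapsto (A,(\hat a,\hat A))$ and that $\cY \leq \cX$ in $\finkinf$ translates to the correct ordering in the iteration, so that Corollary \ref{Mu generic} is applicable verbatim. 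Once this bookkeeping is done, the conclusion is immediate. Since the excerpt explicitly states that "The next theorem follows immediately from Corollary \ref{Mu generic} and Lemma \ref{Mu iteration}," I would keep the written proof short, essentially spelling out the three-step chain above and citing those two results, rather than re-deriving any forcing lemmas.
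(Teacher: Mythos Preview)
Your proposal is correct and follows exactly the paper's approach: the paper's proof consists solely of the remark that the theorem follows immediately from Corollary \ref{Mu generic} and Lemma \ref{Mu iteration}, which is precisely the three-step decomposition you outline. Your added observations about $\sigma$-distributivity of $\mathbb P$ and the bookkeeping for the dense embedding are the right details to check, and the paper leaves them implicit.
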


For the next lemma it will be useful to have the following notion. Given $P\in \finkinf$, $P=\langle p_0, p_1, \dots \rangle$, every element of $\finkfin\upharpoonright X$ is obtained from a finite subsequence of $X$ by
$$T^{(i_0)}(p_{n_0})+ \dots +T^{(i_l)}(p_{n_l})$$
for some increasing  sequence $n_0<\dots <n_l$  and some choice $i_0, \dots , i_l\in \{0, 1, \dots , k\}$ with at least one of the numbers 
$i_0, \dots , i_l$ equal to $0$.
We can thus define a well ordering of $\finkfin\upharpoonright X$ in the followng way.

Let $s=T^{(i_0)}(p_{n_0})+ \dots +T^{(i_l)}(p_{n_l})$ and $t=T^{(j_0)}(p_{m_0})+ \dots +T^{(j_h)}(p_{m_h})$, 
then 

\[
s<_{lex}t \text{ if } 
\begin{cases}
\langle n_0, \dots, n_l\rangle <_{lex} \langle m_0, \dots , m_h\rangle,

 \text{or} \\
\langle n_0, \dots, n_l\rangle = \langle m_0, \dots , m_h\rangle \text{ and }
\langle i_0, \dots , i_l\rangle <_{lex} \langle j_0, \dots , j_h\rangle.
\end{cases}
\]

\begin{lem}
(\cite{Fa} for coideals in the space $\ninf$)
For a coideal $\mathcal H$ on $\finkinf$, the following are equivalent:
\begin{enumerate}
\item $\mathcal H$ is semiselective,
\item $\mathbb M_{\mathcal H}$ has the pure decision property,
\item $\mathbb M_{\mathcal H}$ has the hereditary genericity property.
\end{enumerate}
\end{lem}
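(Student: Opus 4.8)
The plan is to prove the cycle of implications $(1)\Rightarrow(2)\Rightarrow(3)\Rightarrow(1)$, since two of the three arrows are essentially already available in the text. The implication $(1)\Rightarrow(2)$ is precisely Theorem \ref{prikry prop}: if $\mathcal H$ is semiselective then $\mathbb M_{\mathcal H}$ has the pure decision property. The implication $(1)\Rightarrow(3)$ is Theorem \ref{mathias prop}. So the real content is to close the loop, and the cleanest way is to show $(2)\Rightarrow(1)$ and $(3)\Rightarrow(1)$; or, more economically, $(2)\Rightarrow(3)$ (or $(3)\Rightarrow(2)$) together with one reverse arrow into $(1)$. I would prove $(2)\Rightarrow(1)$ and $(3)\Rightarrow(1)$ directly, both by contraposition: assuming $\mathcal H$ is a coideal that is \emph{not} semiselective, I produce a failure of pure decision, respectively a failure of hereditary genericity.

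For $(2)\Rightarrow(1)$, suppose $\mathcal H$ is not semiselective. By Proposition \ref{semiequiv} (and Lemma \ref{distributive}) there is $A\in\mathcal H$ and a sequence $(D_n)_{n\in\mathbb N}$ of dense open subsets of $\mathcal H\cap[n,A]$ admitting no diagonalization below any prescribed $C\in\mathcal H\upharpoonright A$. First I would code the family $(D_n)$ into a dense open subset $\mathcal E$ of $\mathbb M_{\mathcal H}$ below $(\emptyset,A)$, namely $\mathcal E=\{(s,B):B\in D_{\depth_A(s)}\text{ and }[s,B]\neq\emptyset\}$ (enlarged upward to be dense open). The key point is that a condition $(\emptyset,B)$ forces a statement of the form ``the generic block sequence meets $\dot G\cap\mathcal E$ through a condition with second coordinate condensing into every $D_n$'' exactly when $B$ diagonalizes $(D_n)$ within $A$; so if no such $B$ exists in $\mathcal H\upharpoonright A$, then the sentence $\varphi$ asserting ``the generic is a diagonalization of $(D_n)$'' is forced false by $(\emptyset,A)$, yet below \emph{every} $(\emptyset,B)\leq(\emptyset,A)$ with $B\in\mathcal H$ there is a further extension forcing $\varphi$ (obtained by working inside the $D_n$'s one level at a time, exactly as in Lemma \ref{filtered}), contradicting pure decision at $(\emptyset,A)$ with $\varphi$. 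I would be careful to phrase $\varphi$ as a genuine sentence of the forcing language referring only to the canonical name $\dot{\mathcal X}$ for the generic block sequence.

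For $(3)\Rightarrow(1)$, again assume $\mathcal H$ is not semiselective, witnessed by $A$ and $(D_n)$ as above. By Lemma \ref{Mu iteration}, $\mathbb M_{\mathcal H}$ is equivalent to $\mathbb P*\mathbb M_{\hat{\mathcal U}}$ where $\mathbb P=(\mathcal H,\leq^*)$; but the cleaner route is direct. Force with $\mathbb M_{\mathcal H}$ below $(\emptyset,A)$ to get a generic block sequence $\mathcal X$; by genericity against the dense sets $\{(s,B):B\in D_{\depth_A(s)}\}$ one checks that $\mathcal X$ is a diagonalization of $(D_n)$ within $A$ --- this is the analogue of ``the generic real is a diagonalizing set'' and uses only that each $D_n$ is dense open, hence survives intersecting with $[s,B]$ for appropriate $s$. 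Now hereditary genericity says every $\mathcal Y\leq\mathcal X$ is also $\mathbb M_{\mathcal H}$-generic, hence also a diagonalization of $(D_n)$. But a diagonalization of $(D_n)$ within $A$ that lies in $\mathcal H\upharpoonright A$ is exactly what non-semiselectivity forbids --- and $\mathcal X$ itself, being $\mathbb M_{\mathcal H}$-generic over the ground model $V$, must in particular be condensed into some member of $\mathcal H\cap V$ at every level, forcing the existence in $V$ of a diagonalization below an arbitrary $C$ by a reflection/absoluteness argument as in the proof of Theorem \ref{forcing M_U} (the Mostowski well-foundedness trick). This yields the contradiction.

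The main obstacle I anticipate is the bookkeeping in $(2)\Rightarrow(1)$: making precise that the sentence $\varphi$ ``$\dot{\mathcal X}$ is a diagonalization of $(D_n)$ within $A$'' is (a) expressible in the forcing language, (b) forced false by $(\emptyset,A)$ under the non-semiselectivity hypothesis, and yet (c) forceable by \emph{some} extension of every $(\emptyset,B)$ --- step (c) being where one replays Lemma \ref{filtered} inside the forcing. An alternative that sidesteps naming issues is to run the contradiction at the level of a single dense set: use that $\mathbb M_{\mathcal H}$ with pure decision lets one, given the dense open $\mathcal E$ coding $(D_n)$, find $B\in\mathcal H\cap[\emptyset,A]$ with $(\emptyset,B)$ deciding ``$r_{n}(\dot{\mathcal X})\in$ (projection of $\mathcal E$ at level $n$)'' uniformly, and then argue this $B$ must diagonalize $(D_n)$. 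I would adopt whichever of these reads more smoothly; both reduce, in the end, to the equivalence already packaged in Proposition \ref{semiequiv} and Lemma \ref{distributive}.
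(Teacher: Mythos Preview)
Your overall plan---cite Theorems \ref{prikry prop} and \ref{mathias prop} for $(1)\Rightarrow(2)$ and $(1)\Rightarrow(3)$, then close the cycle via $(2)\Rightarrow(1)$ and $(3)\Rightarrow(1)$ by contraposition---matches the paper's. The gap is in the execution of both reverse implications, and it is essentially the same gap: you have the behavior of the generic with respect to diagonalization backwards.

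For $(2)\Rightarrow(1)$, your sentence $\varphi\equiv$ ``$\dot{\mathcal X}$ is a diagonalization of $(D_n)$ within $A$'' is in fact \emph{decided} by $(\emptyset,A)$: it is forced \emph{false}. The paper proves exactly this (passing first to maximal antichains $\mathcal A_a\subseteq D_a$ and letting $\tau_a$ name the unique member of $\mathcal A_a$ with $\dot x\leq^*\tau_a$): given any $[s,B]$ purportedly forcing diagonalization, one uses that $B/s\in\mathcal H$ fails to diagonalize to locate $t$ and an $r\notin\finkfin\upharpoonright A_t$, and the extension $[s\frown t\frown r,\,\cdot\,]$ forces failure at $t$. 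Hence your step~(c)---that below every $(\emptyset,B)$ some extension forces $\varphi$---is simply false, and there is no violation of pure decision. The paper's missing idea is to replace $\varphi$ by a statement that genuinely cannot be purely decided: since the generic is forced not to diagonalize, the $<_{lex}$-first point $\dot s$ of failure is well-defined, and one takes $\phi$ to be the \emph{parity} statement ``the number of blocks of $\dot x$ below $\dot s$ is even''. For any candidate $\bar A\in\mathcal H$ one then builds, by hand, two extensions below $(\emptyset,\bar A)$ placing the first failure at positions of opposite parity. This parity trick is what your proposal lacks.

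For $(3)\Rightarrow(1)$, your claim that genericity against $\{(s,B):B\in D_{\depth_A(s)}\}$ makes $\mathcal X$ a diagonalization fails for the same reason: meeting that set at stem $s$ only gives $\mathcal X/s\leq B$ for \emph{some} $B\in D_{\depth_A(s)}$, and the paper's argument shows the generic is forced \emph{not} to diagonalize, so your subsequent absoluteness step never gets off the ground. The paper runs the argument the other way: since $\dot x\leq^*\tau_a$ is forced for every $a$, one can build in the extension a $y\leq x$ that \emph{does} diagonalize the $\tau_a$'s; such a $y$ cannot be generic (generics are forced not to diagonalize), contradicting hereditary genericity.
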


\begin{proof}
We prove first $(2)$ implies $(1)$.
Suppose $\mathcal H$ is not semiselective. Then
there is $A\in \mathcal H$ and  a sequence $(\mathcal D_a: a\in \finkfin\upharpoonright A)$ of dense open subsets of $(\mathcal H, \leq^*)$ such that no element of $\mathcal{H}\upharpoonright A$ is a diagonalization of the sequence.

We now work with $\{B: B\leq A\}$, and consider the forcing notion $\mathbb M_{\mathcal{H}\upharpoonright A}$.

Pick a sequence $(\mathcal  A_a : a\in \finkfin\upharpoonright A)$ of maximal antichains  of
$(\mathcal{H}\upharpoonright A, \leq^*)$ with $\mathcal A_a \subseteq \mathcal D_a$,  such that no element of $\mathcal{H}\upharpoonright A$
is a diagonalization of the sequence of dense open sets determined by the antichains. 


Each antichain $\mathcal A_a$ determines a maximal antichain in $\mathbb M_{\mathcal{H}\upharpoonright A}$, namely,
$\{[\emptyset, A]: A\in \mathcal A_a\}$.
Let $\dot x$ be a canonical $\mathbb M_{\mathcal
H\upharpoonright A}$-name for a  generic sequence, and let $\tau_a$ be a name of the
unique element of the antichain $\mathcal A_a$ such that $\dot
x\leq^* \tau_a$ is forced. 

Let us show that $\dot x$ is forced not to be a diagonalization of $(\mathcal A_a)$. Suppose the contrary, and let $[s,B]$ be a condition that forces that for every 
$a\in \finkfin\upharpoonright \dot x$, $[a,\dot x]\subseteq [a, \tau_a]$.
Now, $B/s$ is not a diagonalization of the sequence $(\mathcal A_a)$ and so there is $t\in B/s$ be such that $[t, B/t]$ is not contained in $[t,A]$ for any element $A$ 
of $\mathcal A_t$. There is an element $A_t$ of $\mathcal A_t$ such that there is $C\in \mathcal H$ with $C\leq B$ and $C\leq A_t$.
Since $[t, B/t]$ is not contained in $[t,A_t]$, there is some $r\in \finkfin\upharpoonright B/t$ not in $\finkfin\upharpoonright A_t$.
The condition $[s\frown t \frown r, B]$ forces that $\tau_t=A_t$, and forces that $[t, \dot x/t ]$ s not contained in $[t, \tau_t]$. But this contradicts that being an extension of $[s, B]$, 
it must force $[t, \dot x/t ]\subseteq [t, \tau_t]$.

Consider the formula

\medskip
$\phi$ : 
$\dot s$ is the $<_{lex}$-first element of $\finkfin\upharpoonright \dot x$ such that $\dot x/s\not\leq \tau_s$, and the number of blocks  of $\dot x$ below $\dot s$ is even.

\medskip
Since $\dot x$ is forced not diagonalize the sequence $(\mathcal
A_a)_a$, $\dot s$ is well defined.

If $\mathbb M_{\mathcal H}$ has the pure decision property, there is
$\bar A\in \mathcal H$ such that $[\emptyset, \bar A]$ decides $\phi$.

Now, we find    $s_1<t_1<s_2<t_2$ in $\finkfin$ as follows:

$s_1$ is the least $s\in \finkfin\upharpoonright \bar A$ such that $[s, \bar A]$ is not contained in $[s, X]$ for any
member $X$ of $\mathcal A_{s_1}$. Such $s_1$ exists because $\bar A$ is not a diagonalization of $\mathcal A_{s_1}$.

Let $A_1\in \mathcal A_{s_1}$  be such that  there is $C\in \mathcal H$ with $C\leq \bar A$ and $C\leq A_1$, and pick  $t_1>s_1$ such that $t_1\in (\finkfin\upharpoonright A) \setminus (\finkfin\upharpoonright A_1)$.

Set now $A'\in \mathcal H$ such that $A'\leq \bar A$ and $A'\leq A_1/ t_1$. Since $A'$
does not diagonalize the sequence $(\mathcal A_a)_a$, let $s_2$ be
the least $s\in \finkfin \upharpoonright A'$ such that $[s_2, A']$ is not contained in $[s_2, X]$ for any
element $X$ of $\mathcal A_{s_2}$. 
Take $A_2\in \mathcal A_{s_2}$ such that  there is $C\in \mathcal H$ with $C\leq A'$ and $C\leq A_2$, and pick  $t_2>s_2$ such that $t_2\in (\finfin\upharpoonright A' ) \setminus (\finfin\upharpoonright A_2)$.

 Take  $A''\in \mathcal H$ below $A'$ and below $ A_2/t_2$.

Then, $$[\{s_1,  t_1\}, A'] \Vdash \dot s= s_1 \mbox{ and } \dot x \mbox{ does not have elements below } s.$$

So, this contition forces  $\phi$, while
$$[\{s_1, s_2, t_2\}, A''] \Vdash \dot s = s_2 \mbox{ and } \dot x \mbox{ has one element below } s. $$
thus, this second condition forces $\neg \phi$, a contradiction since both conditions extend $[\emptyset , A]$.

\medskip
Now, we prove $(3)$ implies$ (1)$.

As in the previous case, suppose $\mathcal H$ is not semiselective and thus
there is $A\in \mathcal H$ and  a sequence $(\mathcal D_a: a\in \FIN_k\upharpoonright A)$ of dense open subsets of $(\mathcal H, \leq^*)$ such that 
no element of $\mathcal{H}\upharpoonright A$ is a diagonalization of the sequence.
Pick a sequence $(\mathcal  A_a : a\in \FIN_k\upharpoonright A)$ of maximal antichains  of
$(\mathcal{H}\upharpoonright A, \subseteq^*)$ with $\mathcal A_a \subseteq \mathcal D_a$,  such that no element of $\mathcal{H}\upharpoonright A$
is a diagonalization of the sequence. 

We know that  $\dot x$ is forced not to be a diagonalization of the sequence $(\mathcal A_a)$. But since for every $a$ $\dot x\subseteq^* \tau_a$ is forced. 
 there is $y\leq \dot x$ which diagonalizes $(A_a)$ and thus this $y$ cannot be in $\mathcal H$.
\end{proof}

\begin{thm}
Suppose $\lambda$ is a Mahlo cardinal and let $V[G]$ be a generic extension by $Col(\omega, \lambda)$.
If $\mathcal H$ is a semiselctive coideal on $\finkinf$ in $V[G]$, then every subset of $\finkinf$ in $L(\mathbb R)$ of $V[G]$ is $\mathcal H$-Ramsey.
\end{thm}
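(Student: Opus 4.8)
The proof adapts the classical Mathias–Silver argument for the statement ``in the Levy collapse of a Mahlo cardinal, every set of reals in $L(\mathbb{R})$ is Ramsey'' to the present setting of $\mathcal H$-Ramseyness on $\finkinf$. First I would recall the general principle: if $\lambda$ is Mahlo and $V[G]$ is obtained by $\col=Col(\omega,<\lambda)$, then every $\mathcal X\subseteq\finkinf$ in $L(\mathbb R)^{V[G]}$ is definable (in $L(\mathbb R)^{V[G]}$) from a real $r$ and ordinal parameters, hence, by homogeneity of the collapse, there is an intermediate model $V[G\restriction\alpha]$ (with $\alpha<\lambda$ inaccessible) over which $\mathcal X$ is ``$\col(\alpha,<\lambda)$-definable'' uniformly. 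The key reduction is: to show $\mathcal X$ is $\mathcal H$-Ramsey it suffices to find, for a given $[a,A]\neq\emptyset$ with $A\in\mathcal H$, a $B\in[a,A]\cap\mathcal H$ that is $\bM_{\mathcal H}$-generic over a suitable small model $N$ (here $N=V[G\restriction\alpha]$, or rather the relevant $H_\theta^{N}$), together with the fact that genericity decides membership in $\mathcal X$. This is where Theorem \ref{prikry prop} (pure decision) and Theorem \ref{mathias prop} (hereditary genericity) enter.

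The main steps, in order, are the following. (1) Fix $[a,A]$ with $A\in\mathcal H$. Using that $\lambda$ is Mahlo, choose an inaccessible $\alpha<\lambda$ with $A\in V[G\restriction\alpha]$, so that $V[G]=V[G\restriction\alpha][H]$ where $H$ is $Col(\omega,<\lambda)$-generic over $N:=V[G\restriction\alpha]$. (2) Show that $\mathcal H$, being a semiselective coideal in $V[G]$, and $\mathbb M_{\mathcal H}$ restricted to conditions with second coordinate in $N$, has the pure decision and hereditary genericity properties (from Theorems \ref{prikry prop}, \ref{mathias prop}); more precisely, work with $\mathcal H\cap N$-style approximations and the ``pure decision'' to build, by a fusion along $r_m$, a single $B\in[a,A]\cap\mathcal H$ which is $\bM_{\mathcal H\cap N}$-generic over $N$ — this uses that $N\models ZFC$ has only set-many dense sets of the relevant forcing, and each is met using A3 $\bmod\,\mathcal H$ and A4 $\bmod\,\mathcal H$ exactly as in the proof of Theorem \ref{prikry prop} and Lemma \ref{lemma captures}. (3) By the Levy–Solovay absoluteness phenomenon for the Mahlo collapse, the set $\mathcal X$ (for a parameter in $N$) is, inside $L(\mathbb R)^{V[G]}$, computed the same way over every such $N$; combined with hereditary genericity (Corollary \ref{Mu generic} / Theorem \ref{mathias prop}), every $C\leq B$ with $C\in\mathcal H$ is also generic, so the truth value of ``$C\in\mathcal X$'' is constant on $[a,B]$. (4) Conclude $[a,B]\subseteq\mathcal X$ or $[a,B]\cap\mathcal X=\emptyset$, i.e. $\mathcal X$ is $\mathcal H$-Ramsey.

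For step (3) the cleanest route is: by the usual analysis of the Levy collapse of a Mahlo cardinal, every set in $L(\mathbb R)^{V[G]}$ has the property that its trace on the $\bM_{\mathcal H}$-generic block sequences is decided by a condition — formally, one shows that ``$\mathcal X$ is $\mathcal H$-Ramsey'' is itself a consequence of the statement that $\bM_{\mathcal H}$-genericity over $N$ suffices to decide all $L(\mathbb R)$-definable properties, which holds because below each inaccessible $\alpha<\lambda$ the tail $Col(\omega,[\alpha,\lambda))$ is still the Levy collapse of a Mahlo (in fact of $\lambda$) and is weakly homogeneous, so $L(\mathbb R)^{V[G]}=L(\mathbb R)^{N[H]}$ and membership in $\mathcal X$ for a generic $B$ is forced by $(a,B)$ in $\bM_{\mathcal H\cap N}$. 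Then pure decision over $N$ lets us thin $B$ so that $(a,B)$ itself decides ``$\dot g\in\mathcal X$''.

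**The main obstacle.** The hard part is step (2)–(3): showing that the semiselective coideal $\mathcal H$ \emph{in $V[G]$} interacts well with the \emph{small} model $N=V[G\restriction\alpha]$, i.e. that one can carry out the pure-decision fusion using only dense sets lying in $N$ while the coideal $\mathcal H$ itself may not belong to $N$ (indeed it need not). The resolution is that the proofs of Theorem \ref{prikry prop} and Lemma \ref{lemma captures} only use $A3\bmod\mathcal H$, $A4\bmod\mathcal H$, and semiselectivity \emph{as external facts about $\mathcal H$ in $V[G]$}, applied to countably (or set-)many dense sets that come from $N$; since $N\models ZF+DC$ and $\col$ preserves enough, the diagonalizations produced by semiselectivity of $\mathcal H$ can be found to meet every dense set of $\bM_{\mathcal H\cap N}$ that is in $N$. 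Making this bookkeeping precise — and verifying that the Mahlo-ness of $\lambda$ supplies cofinally many inaccessible $\alpha$ that ``reflect'' the definition of $\mathcal X$ and contain the parameter $A$ — is the technical heart of the argument, and it is exactly the point where the hypothesis that $\lambda$ is Mahlo (rather than merely inaccessible) is used.
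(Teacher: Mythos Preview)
Your overall architecture is right and matches the paper: factor through an intermediate collapse stage $N=V[G_\kappa]$ with $\kappa<\lambda$ inaccessible, use pure decision (Theorem \ref{prikry prop}) to find $A'\leq A$ in $\mathcal H\cap N$ with $(a,A')$ deciding $\varphi(\dot x)$, produce a $\bM_{\mathcal H\cap N}$-generic $x\in[a,A']\cap\mathcal H$, and then invoke hereditary genericity (Theorem \ref{mathias prop}) to homogenize $[a,x\setminus a]$.

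However, your step (2)--(3) has a genuine gap, and your diagnosis of ``the main obstacle'' misses the actual point. You propose to use semiselectivity of $\mathcal H$ \emph{externally in $V[G]$} to meet the dense sets of $\bM_{\mathcal H\cap N}$ that lie in $N$. That would at best produce a single generic $B$; it does \emph{not} give you hereditary genericity over $N$, because hereditary genericity for $\bM_{\mathcal H\cap N}$ requires that $\mathcal H\cap N$ be a semiselective coideal \emph{in $N$}. Nothing in your sketch ensures this: for an arbitrary inaccessible $\alpha$ with $A\in V[G_\alpha]$, the trace $\mathcal H\cap V[G_\alpha]$ need not even be a coideal there (A4 mod $\mathcal H\cap N$ could fail), let alone semiselective. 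Likewise, your reflection target is wrong: what must be reflected is not ``the definition of $\mathcal X$'' but the \emph{name} $\dot{\mathcal H}$.

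The paper resolves this by choosing $\kappa$ via elementarity: since $\lambda$ is Mahlo, there is an inaccessible $\kappa<\lambda$ with
\[
\langle V_\kappa,\in,\dot{\mathcal H}\cap V_\kappa, Col(\omega,<\kappa)\rangle \prec \langle V_\lambda,\in,\dot{\mathcal H}, Col(\omega,<\lambda)\rangle,
\]
which lifts to $\langle V_\kappa[G_\kappa],\in,\mathcal H\cap V_\kappa[G_\kappa]\rangle\prec\langle V_\lambda[G],\in,\mathcal H\rangle$. Now the statement ``$\bM_{\mathcal H}$ has pure decision'' is first-order in this language and holds in $V_\lambda[G]$, so by elementarity $\bM_{\mathcal H\cap V_\kappa[G_\kappa]}$ has pure decision in $V_\kappa[G_\kappa]$. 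The lemma immediately preceding the theorem (equivalence of semiselectivity with pure decision) then gives that $\mathcal H\cap V_\kappa[G_\kappa]$ is semiselective in $V_\kappa[G_\kappa]$, and hence has hereditary genericity there. With this in hand the Mathias-style finish goes through exactly as you describe. The role of Mahlo is precisely to furnish this elementary substructure carrying $\dot{\mathcal H}$ as a predicate; merely finding an inaccessible below $\lambda$ containing the parameter $A$ is not enough.
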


\begin{proof}
Let $\mathcal H\subseteq \finkinf$ be a semiselective coideal in $V[G]$; and let $\Mh$ be the coresponding forcing with respect to
$\mathcal H$. Let $\mathcal A \subseteq \finkinf$  in $L(\mathbb R)^{V[G]}$; in particular,
$\mathcal A$ is defined in $V[G]$ by a formula $\varphi$ from a
sequence of ordinals.

Let  $\dot{\mathcal H}$ be a $\col$-name for
$\mathcal H$. Notice that $\dot{\mathcal H}\subseteq V_\lambda$; also,  elements of $\finkinf$ in $V[G]$
 have  $\col$-names in $V_\lambda$.
Therefore $\dot{\mathcal H}$ is interpreted in $V_\lambda[G]$ as $\mathcal H$.

Since $\lambda$ is a Mahlo  cardinal, the set of inaccessible
cardinals below $\lambda$ is stationary, and we can find an
inaccessible $\kappa<\lambda$ such that

$$\langle V_\kappa, \in , \dot{\mathcal H}\cap V_\kappa, Col(\omega, <\lambda)\cap V_\kappa\rangle \prec \langle V_\lambda, \in ,\dot{\mathcal H}, Col(\omega, < \lambda)\rangle.$$

It follows that
$$\langle V_\kappa[G_\kappa],  \in ,\mathcal H\cap V_\kappa[G_\kappa], Col(\omega, <\lambda)\cap V_\kappa[G_\kappa]\rangle \prec \langle V_\lambda[G], \in ,\mathcal H, Col(\omega, < \lambda)\rangle,$$

where $G_\kappa = G\cap Col(\omega, <\kappa)$.
This can be verified as follows. Let $\phi(\tau_1,\dots, \tau_n)$
be a formula of the forcing language of $Col(\omega, <\kappa)$. If
this formula is valid in $ V_\kappa[G_\kappa]$, interpreting
$\tau_1, \dots, \tau_n$  by $G_\kappa$, then there is a condition
$p\in G_\kappa$ that forces $\phi$. Since $p$ is also in
$G$ and $\tau_1, \dots, \tau_n$  are also
$Col(\omega,<\lambda)$-names,  $\phi$ is also valid in
$V_\lambda[G]$. Conversely, if $\phi(\tau_1, \dots,
\tau_n)$ is valid in $V_\lambda[G]$ there is $q\in
G$ that forces it (in $V_\lambda$). Since the names
$\tau_1, \dots, \tau_n$ are $Col(\omega,<\kappa)$-names,
$p=q\restriction (\omega\times\kappa)$ forces in $V_\lambda$ the
formula $\phi$. By elementarity this also holds in $V_\kappa$, and
therefore $\phi$ interpreted by $G_\kappa$ is valid in
$V_\kappa[G_\kappa]$.

We want to show that the coideal $\mathcal H\cap V_\kappa[G_\kappa]$ is semiselective in $V_\kappa[G_\kappa]$.
We will show that in the model $V_\kappa[G_\kappa]$ the partial order $\mathbb M_{\mathcal H\cap V_\kappa[G_\kappa]}$ has the pure decision property.

We now consider the forcing notion $\mathbb M_{\mathcal H\cap V_{\kappa [G_\kappa]}}$ in $V_\kappa[G_\kappa]$.
Let $\psi$ be a formula in the forcing language of  $\mathbb M_{\mathcal H\cap V_{\kappa[G_\kappa]}}$, and
let $[a,A]$  be a condition in $\mathbb M_{\mathcal H\cap V_{\kappa [G_\kappa]}}$. Since this is also a condition in
$\mathbb M_{\mathcal H}$, in $V_\lambda[G_\lambda]$ there is $B\in \mathcal H$, $B\subseteq A$, such that
$[a,B]$ decides $\psi$.
But by elementarily this statement also holds in $V_\kappa[G_\kappa]$ and so in $V_\kappa[G_\kappa]$ there is
$B\in \mathcal H\cap V_\kappa[G_\kappa]$, $B\subseteq A$ such that $[a,B]$ decides $\psi$.


Since this happens for every formula of the forcing language of $\Mh\cap V_\kappa$, then the coideal $\mathcal H\cap V_\kappa[G_\kappa]$ has the pure decision property and thus it is semiselective and it also has the hereditary genericity property.

Now the proof can be finished as in \cite{Ma}. Let $\dot x$ be the
canonical name of a $\mathbb M_{\mathcal H \cap V[G_\kappa]}$
generic sequence, and consider the formula $ \varphi( \dot x)$ in the
forcing language of $V[G_\kappa]$. By the pure decision property of
$\mathcal H \cap V[G_\kappa]$, there is $A'\leq A$, $A'\in
\mathcal H \cap V[G_\kappa]$, such that $[a,A']$ decides $\varphi(
\dot x)$. Since $2^{2^{FIN}}$ computed in $V[G_\kappa] $ is
countable in $V[G]$, there is (in $V[G]$) a  $\mathbb M_{\mathcal H
\cap V[G_\kappa]}$-generic  sequence $x$ over $V[G_\kappa]$ such that
$x\in [a,A']$. To see that there is such a generic sequence in $\mathcal
H$ we argue as in 5.5 of \cite{Ma}  using the semiselectivity of
$\mathcal H$ and the fact that $\mathcal H\cap V[G_\kappa]$ is
countable in $V[G]$ to obtain an element of $\mathcal H$ which is
generic.
 By the hereditary genericity property
of $\mathcal H \cap V[G_\kappa]$, every  $y\in [a,x\setminus a]$ is
also $\mathbb M_{\mathcal H \cap V[G_\kappa]}$-generic over
$V[G_\kappa]$, and also $y\in [a,A']$. Thus $\varphi(x)$ if and only
if $[a,A']\Vdash\varphi(\dot x)$, if and only if
 $\varphi (y)$.
Therefore, $[a,x\setminus a]$ is contained in $\mathcal A$ or is
disjoint from $\mathcal A$.
\end{proof}

As in \cite{Ma}, we obtain the following.

\begin{coro}
If $ZFC$ is consistent with the existence of a Mahlo
cardinal, then so is the statement that for every semiselective
co-ideal $\mathcal H$  on $\finkinf$, every subset of $\finkinf$ in $L(\mathbb R)$
is $\mathcal H$-Ramsey.
\end{coro}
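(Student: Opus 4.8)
The plan is to mimic Mathias's original argument (§5 of \cite{Ma}) in this more general setting, using the forcing-theoretic characterizations of semiselectivity already established in the excerpt. Fix a semiselective coideal $\mathcal H \subseteq \finkinf$ in $V[G]$, and let $\mathcal A \subseteq \finkinf$ in $L(\mathbb R)^{V[G]}$; so $\mathcal A$ is defined from a formula $\varphi$ and a sequence of ordinals. Fix a nonempty $[a,A]$ with $A \in \mathcal H$. Let $\dot{\mathcal H}$ be a $\col$-name for $\mathcal H$; since $\col$-names for reals (and hence for elements of $\finkinf$) can be taken inside $V_\lambda$, we may assume $\dot{\mathcal H} \subseteq V_\lambda$, and $\dot{\mathcal H}$ is interpreted as $\mathcal H$ in $V_\lambda[G]$.

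The first key step uses that $\lambda$ is Mahlo: the inaccessibles below $\lambda$ are stationary, so there is an inaccessible $\kappa < \lambda$ with
$$\langle V_\kappa, \in, \dot{\mathcal H} \cap V_\kappa, \col \cap V_\kappa \rangle \prec \langle V_\lambda, \in, \dot{\mathcal H}, \col \rangle,$$
and a standard forcing-name argument (spelled out in the proof of the previous theorem in the excerpt) shows this elementarity descends to the extensions:
$$\langle V_\kappa[G_\kappa], \in, \mathcal H \cap V_\kappa[G_\kappa], \col \cap V_\kappa[G_\kappa] \rangle \prec \langle V_\lambda[G], \in, \mathcal H, \col \rangle,$$
where $G_\kappa = G \cap Col(\omega, {<}\kappa)$. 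The second step is to transfer semiselectivity down to $V_\kappa[G_\kappa]$: I would show $\mathbb M_{\mathcal H \cap V_\kappa[G_\kappa]}$ has the pure decision property in $V_\kappa[G_\kappa]$. Given a formula $\psi$ of that forcing language and a condition $[a,A]$ with $A \in \mathcal H \cap V_\kappa[G_\kappa]$, this is also a condition in $\mathbb M_{\mathcal H}$ in $V_\lambda[G]$, where by Theorem \ref{prikry prop} there is $B \in \mathcal H$, $B \leq A$, with $[a,B]$ deciding $\psi$; by elementarity such a $B$ exists in $\mathcal H \cap V_\kappa[G_\kappa]$. By the Lemma relating pure decision to semiselectivity, $\mathcal H \cap V_\kappa[G_\kappa]$ is semiselective in $V_\kappa[G_\kappa]$, hence by Theorem \ref{mathias prop} it also has the hereditary genericity property there.

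The third and final step is the absoluteness/homogeneity argument. Let $\dot x$ be the canonical name for an $\mathbb M_{\mathcal H \cap V_\kappa[G_\kappa]}$-generic block sequence and consider $\varphi(\dot x)$ in the forcing language over $V_\kappa[G_\kappa]$. By pure decision there is $A' \leq A$ in $\mathcal H \cap V_\kappa[G_\kappa]$ with $[a,A']$ deciding $\varphi(\dot x)$. Since $\kappa$ is inaccessible in $V$ and collapsed to be countable in $V[G]$, the set of dense subsets of $\mathbb M_{\mathcal H \cap V_\kappa[G_\kappa]}$ lying in $V_\kappa[G_\kappa]$ is countable in $V[G]$; arguing as in 5.5 of \cite{Ma}, using the semiselectivity of $\mathcal H$ itself together with the countability of $\mathcal H \cap V_\kappa[G_\kappa]$ in $V[G]$, one builds an $\mathbb M_{\mathcal H \cap V_\kappa[G_\kappa]}$-generic block sequence $x$ over $V_\kappa[G_\kappa]$ with $x \in [a,A']$ and, crucially, $x \in \mathcal H$. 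By hereditary genericity of $\mathcal H \cap V_\kappa[G_\kappa]$, every $y \in [a, x\setminus a]$ is also generic over $V_\kappa[G_\kappa]$ and lies in $[a,A']$, so $\varphi(y) \leftrightarrow [a,A'] \Vdash \varphi(\dot x) \leftrightarrow \varphi(x)$; thus $[a, x\setminus a] \subseteq \mathcal A$ or $[a, x\setminus a] \cap \mathcal A = \emptyset$, and $x\setminus a \in \mathcal H$, witnessing the $\mathcal H$-Ramsey property for $[a,A]$. The main obstacle is the last step — producing a generic block sequence that actually lies inside $\mathcal H$ (not merely in $\finkinf$); this is exactly where one needs the full strength of semiselectivity of $\mathcal H$ in $V[G]$ (via its diagonalization/$\sigma$-distributivity property, Lemma \ref{distributive}) rather than just of the countable piece $\mathcal H \cap V_\kappa[G_\kappa]$, and it parallels the most delicate point in Mathias's proof.
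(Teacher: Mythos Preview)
Your proposal is correct and follows essentially the same route as the paper. In fact, what you have written is the proof of the preceding Theorem (the one asserting that in the L\'evy collapse $V[G]$ of a Mahlo cardinal every set in $L(\mathbb R)^{V[G]}$ is $\mathcal H$-Ramsey); the paper proves that Theorem with exactly the steps you describe—reflection to an inaccessible $\kappa<\lambda$, transfer of pure decision (hence semiselectivity and hereditary genericity) to $\mathcal H\cap V_\kappa[G_\kappa]$ by elementarity, and the Mathias-style construction of a generic $x\in\mathcal H$ below $[a,A']$ using countability of $V_\kappa[G_\kappa]$ in $V[G]$—and then states the Corollary with no further argument beyond ``As in \cite{Ma}''.
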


\end{document}